\documentclass[a4paper, twoside]{scrartcl}
\usepackage{amsfonts,amssymb,amsmath,amsthm,amstext,amssymb,mathtools}
\usepackage{subcaption,float,color,xcolor,graphicx,enumitem}
\usepackage{dsfont}  
\usepackage[normalem]{ulem} 
\usepackage[authoryear,sort,round]{natbib}  
\usepackage{fullpage} 
\usepackage{hyperref,nicefrac}
\usepackage[noabbrev,capitalise,nosort,nameinlink]{cleveref}

\crefname{assumption}{Assumption}{Assumptions}

\usepackage{mathrsfs}
\newcommand{\Borel}[1]{\mathscr{B}(#1)}
\newcommand*{\cball}[2]{B_{#2}(#1)}
\newcommand*{\cballn}[3]{B_{#2}^{#3}(#1)}
\newcommand*{\oball}[2]{\mathring{B}_{#2}(#1)}
\newcommand*{\dd}{\Delta}
\newcommand*{\defeq}{\coloneqq}
\newcommand*{\defterm}{\emph}
\newcommand*{\diam}{\mathop{\textup{diam}}\nolimits}
\newcommand*{\incomp}{\mathrel{\Vert}}

\newcommand*{\Naturals}{\mathbb{N}}
\newcommand*{\Rationals}{\mathbb{Q}}
\newcommand*{\one}{\mathds{1}}
\newcommand*{\prob}[1]{\mathscr{P}(#1)}

\newcommand*{\quark}{\setbox0\hbox{$x$}\hbox to\wd0{\hss$\cdot$\hss}}
\newcommand*{\rd}{\mathrm{d}}
\newcommand*{\Reals}{\mathbb{R}}

\DeclareMathOperator*{\supp}{supp}

\newcommand*{\cprime}{\bgroup\ensuremath'\!\egroup}

\newcommand*{\even}{\textup{e}}
\newcommand*{\odd}{\textup{o}}

\renewcommand*{\geq}{\geqslant}

\renewcommand*{\leq}{\leqslant}

\renewcommand*{\preceq}{\preccurlyeq}
\renewcommand*{\succeq}{\succcurlyeq}

\newcommand*{\crcdf}[2]{\mu(\cball{#1}{#2})}

\newcommand*{\liprec}{\preceq_0^{\liminf}}
\newcommand*{\lsprec}{\preceq_0^{\limsup}}
\newcommand*{\Liprec}{\preceq_0^{\Li}}
\newcommand*{\Lsprec}{\preceq_0^{\Ls}}

\newcommand*{\lsasymp}{\asymp_0^{\limsup}}

\DeclareMathOperator*{\Li}{Li}
\DeclareMathOperator*{\Ls}{Ls}

\newcommand*{\indexof}[2]{k(#1,#2)}
\newcommand*{\massof}[1]{m(#1)}
\newcommand*{\dyadicirratexp}[1]{\beta_2(#1)}
\newcommand*{\dyadicmeas}[2]{\varphi_2(#1, #2)}
\newcommand*{\irratmeas}[2]{\varphi(#1, #2)}
\newcommand*{\irratexp}[1]{\beta(#1)}

\newcommand*{\upclosure}{\mathop{\uparrow}\nolimits}

\newcommand*{\todo}[1]{\bgroup\color{red}#1\egroup}

\theoremstyle{plain}
\newtheorem{theorem}{\sffamily Theorem}[section]
\newtheorem{proposition}[theorem]{\sffamily Proposition}
\newtheorem{lemma}[theorem]{\sffamily Lemma}
\newtheorem{corollary}[theorem]{\sffamily Corollary}
\theoremstyle{definition}
\newtheorem{definition}[theorem]{\sffamily Definition}

\newtheorem{example}[theorem]{\sffamily Example}

\newtheorem{remark}[theorem]{\sffamily Remark}

\newcommand{\absval}[1]{\lvert #1 \rvert}

\newcommand{\norm}[1]{\lVert #1 \rVert}
\newcommand{\set}[2]{\{ #1 \mid #2 \}}

\newcommand{\bigset}[2]{\bigl\{ #1 \,\big\vert\, #2 \bigr\}}
\newcommand{\Absval}[1]{\left\vert #1 \right\vert}

\newcommand{\Set}[2]{\left\{ #1 \,\middle\vert\, #2 \right\}}

\numberwithin{equation}{section}
\numberwithin{figure}{section}
\numberwithin{table}{section}

\usepackage{etex}
\usepackage[a4paper, top=88pt, bottom=88pt, left=72pt, right=72pt, headsep=16pt, footskip=28pt]{geometry}
\usepackage{hyperref}
\hypersetup{
	colorlinks,
	linkcolor=blue,
	citecolor=blue,
	urlcolor=blue,
}
\setlength{\bibsep}{0pt plus 0.3ex}
\newcommand*{\arXiv}[1]{\bgroup\color{blue}\href{https://arxiv.org/abs/#1}{arXiv:#1}\egroup}
\newcommand*{\doi}[1]{\bgroup\color{blue}\href{https://doi.org/#1}{doi:#1}\egroup}
\newcommand*{\email}[1]{\bgroup\color{blue}\href{mailto:#1}{#1}\egroup}
\renewcommand*{\url}[1]{\bgroup\color{blue}\href{#1}{#1}\egroup}
\usepackage{enumitem, moreenum}
\setlist[enumerate]{nosep}
\setlist[itemize]{nosep}
\usepackage{mleftright} \mleftright

\usepackage{xpatch}
\newcommand{\proofheadfont}{\bfseries\sffamily}
\xpatchcmd{\proof}{\itshape}{\proofheadfont}{}{}

\usepackage[labelfont={sf,bf}]{caption}
\usepackage{scrlayer-scrpage, xhfill}
\automark[section]{section}
\setkomafont{pageheadfoot}{\normalcolor\sffamily}
\setkomafont{pagenumber}{\normalfont\normalsize\sffamily}
\clearpairofpagestyles
\let\oldtitle\title
\renewcommand{\title}[1]{\oldtitle{#1}\newcommand{\theshorttitle}{#1}}
\newcommand{\shorttitle}[1]{\renewcommand{\theshorttitle}{#1}}
\let\oldauthor\author
\renewcommand{\author}[1]{\oldauthor{#1}\newcommand{\theshortauthor}{#1}}
\newcommand{\shortauthor}[1]{\renewcommand{\theshortauthor}{#1}}
\cohead{\xrfill[0.525ex]{0.6pt}~\theshorttitle~\xrfill[0.525ex]{0.6pt}}
\cehead{\xrfill[0.525ex]{0.6pt}~\theshortauthor~\xrfill[0.525ex]{0.6pt}}
\cfoot*{\xrfill[0.525ex]{0.6pt}~\pagemark~\xrfill[0.525ex]{0.6pt}}
\newcommand{\theabstract}[1]{\par\bgroup\noindent\textbf{\textsf{Abstract.}} #1\egroup}
\newcommand{\thekeywords}[1]{\par\smallskip\bgroup\noindent\textbf{\textsf{Keywords.}}\newcommand{\and}{ $\bullet$ } #1\egroup}
\newcommand{\themsc}[1]{\par\smallskip\bgroup\noindent\textbf{\textsf{2020 Mathematics Subject Classification.}}\newcommand{\and}{ $\bullet$ } #1\egroup}
\newcommand{\theshortmsc}[1]{\par\smallskip\bgroup\noindent\textbf{\textsf{2020 MSC.}}\newcommand{\and}{ $\bullet$ } #1\egroup}

\newcommand*{\affilref}[1]{\ref{affiliation#1}}
\newcommand*{\affiliation}[3]{
	\footnotetext[#1]{\label{affiliation#2}#3}
}

\title{An order-theoretic perspective on modes\\and maximum a posteriori estimation in\\ Bayesian inverse problems}
\shorttitle{An order-theoretic perspective on modes and MAP estimation}
\author{%
	Hefin Lambley\textsuperscript{\affilref{Warwick}}%
	\and%
	T.~J.~Sullivan\textsuperscript{\affilref{Warwick},\affilref{Turing}}%
}
\shortauthor{H.~Lambley and T.~J.~Sullivan}
\date{\today}

\begin{document}

\maketitle
\affiliation{1}{Warwick}{Mathematics Institute and School of Engineering, University of Warwick, Coventry, CV4 7AL, United Kingdom\newline (\email{hefin.lambley@warwick.ac.uk}, \email{t.j.sullivan@warwick.ac.uk})}
\affiliation{2}{Turing}{Alan Turing Institute, 96 Euston Road, London, NW1 2DB, United Kingdom}

\begin{abstract}\small
	\theabstract{It is often desirable to summarise a probability measure on a space $X$ in terms of a mode, or MAP estimator, i.e.\ a point of maximum probability.
Such points can be rigorously defined using masses of metric balls in the small-radius limit.
However, the theory is not entirely straightforward:
the literature contains multiple notions of mode and various examples of pathological measures that have no mode in any sense.
Since the masses of balls induce natural orderings on the points of $X$, this article aims to shed light on some of the problems in non-parametric MAP estimation by taking an order-theoretic perspective, which appears to be a new one in the inverse problems community.
This point of view opens up attractive proof strategies based upon the Cantor and Kuratowski intersection theorems;
it also reveals that many of the pathologies arise from the distinction between greatest and maximal elements of an order, and from the existence of incomparable elements of $X$, which we show can be dense in $X$, even for an absolutely continuous measure on $X = \mathbb{R}$.
}
	\thekeywords{Bayesian inverse problems%
\and%
local behaviour of measures%
\and%
maximum a posteriori estimation%
\and%
modes of probability measures%
\and%
orders on metric spaces%
}
	\theshortmsc{06F99
\and%
28A75
\and%
28C15
\and%
60B05
\and%
62F10
\and%
62R20
}
\end{abstract}

\section{Introduction}
\label{sec:introduction}

In diverse applications such as statistical inference and the analysis of transition paths of random dynamical systems it is desirable to summarise a complicated probability measure $\mu$ on a space $X$ by a single distinguished point $x^{\star} \in X$ that is, in some sense, a ``point of maximum probability'' under $\mu$ --- i.e.\ a \emph{mode} or, in the Bayesian context, a \emph{maximum a post\-eriori} (\emph{MAP}) \emph{estimator}.
Many optimisation-based approaches to inverse problems (e.g.\ Tikhonov regularisation of the misfit) aim to calculate or approximate modes, at least heuristically understood.
Over the last decade, it has become common to define modes in terms of masses of metric balls in the limit as the ball radius tends to zero, since this makes sense even when $X$ is a very general --- possibly infinite-dimensional --- space, as is often the case for modern inference problems \citep{Stuart2010}.

However, this ``small balls'' theory of modes is not entirely straightforward.
There are various definitions --- e.g.\ the strong mode of \citet{DashtiLawStuartVoss2013}, the generalised strong mode of \citet{Clason2019GeneralizedModes}, the weak mode of \citet{HelinBurger2015} --- with various subtle distinctions among them.
Even the existence theory for modes is not entirely straightforward:
there are already examples in the literature, and this article will supply further examples, of relatively simple probability measures that have no mode.
It can even be the case that the average of two disjointly supported unimodal probability measures may have no mode.

The purpose of this article is to formulate the notion of a mode in an order-theoretic manner and thereby to clarify some of these pathologies in the theory of modes.
We claim that this is a natural step to take in view of the heuristic understanding of modes as ``most probable points''.

With an order-theoretic point of view, many of the difficulties can be seen to arise from the distinction between greatest and maximal elements of a preordered set $(X, \preceq)$ when the preorder $\preceq$ is not total, i.e.\ when there exist \defterm{incomparable} $x, x' \in X$ for which neither $x \preceq x'$ nor $x \succeq x'$ holds.
Simply put, a greatest element must dominate every other element of $X$, whereas a maximal element need only dominate those with which it is comparable;
for a total preorder, maximal and greatest elements coincide.
Motivated by the needs of inverse problems theory, current notions of modes correspond to greatest elements.
However, many preorders lack maximal elements, and even those that have maximal elements may lack greatest elements;
this is exactly the situation of the examples discussed in \Cref{thm:oscillation_example,thm:countable_dense_antichain}.
Thus, one might argue that current notions of mode are order-theoretically ``too strong'', and perhaps maximal elements should be considered as modes, but possibly these are ``too weak'' for the needs of applications communities.
We hope that the present article will stimulate discussion on this point.

\paragraph{Outline of the paper.}
The rest of this paper is structured as follows:

\Cref{sec:notation} sets out basic notation for the rest of the paper, including a brief recap of necessary concepts from functional analysis, measure theory, and order theory.

\Cref{sec:related} gives an overview of related work in this area, in particular the ``small balls'' approach to defining MAP estimators for non-parametric statistical inverse problems.

\Cref{sec:positive-radius_preorder} introduces and analyses the total preorder $\preceq_{r}$ on $X$ induced by the $\mu$-measures of metric balls of fixed radius $r > 0$.
Because the preorder $\preceq_{r}$ is total, its maximal elements are also greatest, and can be seen as approximate ``radius-$r$ modes'' for $\mu$.
We are able to provide several criteria for the existence of such radius-$r$ modes $x_{r}^{\star}$ (\Cref{thm:r_greatest}) as well as examples of measures that admit none (\Cref{eg:no_radius_1_mode,eg:no_radius_r_mode}).
As a prelude to the next section, we also consider the convergence of $x_{r}^{\star}$ as $r \to 0$ (\Cref{thm:limits_of_r-modes,thm:AMFs_and_strong_modes}).

In \Cref{sec:limiting_preorders} we attempt to take the limit as $r \to 0$ of the preorders $\preceq_{r}$ to define a preorder $\preceq_{0}$ whose greatest elements will be weak modes of $\mu$.
However, because the preorder $\preceq_{0}$ is not total, the distinction between greatest and maximal elements becomes important.
Incomparable maximal elements are particularly troubling because their maximality means that one would like to think of them as candidate modes, yet their incomparability means that one cannot actually say which is ``most probable'' and hence a bona fide mode, as in \Cref{thm:oscillation_example}.
We show that antichains (collections of mutually incomparable elements) can be topologically dense in $X$ even when $\mu$ is absolutely continuous with respect to Lebesgue measure on $X \subseteq \Reals$ (\Cref{thm:countable_dense_antichain}).
We also show that measures with a continuous Lebesgue density may have incomparable elements, but never incomparable \emph{maximal} elements (\Cref{eg:countable_space_antichain}, \Cref{prop:essentially_total_examples}).

\Cref{sec:conclusion} gives some closing remarks, while technical supporting results can be found in \Cref{sec:technical}, and \Cref{sec:alternative_small-radius_preorders} discusses some alternatives to the limiting preorder $\preceq_{0}$ of \Cref{sec:limiting_preorders} and illustrates their shortcomings.

\clearpage

\section{Problem setting and notation}
\label{sec:notation}

\subsection{Spaces of interest}

Throughout, unless noted otherwise, $X$ will be a metric space with metric $d$;
we write $\Borel{X}$ for its Borel $\sigma$-algebra, i.e.\ the one generated by the closed balls $\cball{x}{r} \defeq \set{x' \in X}{d(x, x') \leq r}$, $x \in X$, $r \geq 0$;
we also write $\oball{x}{r} \defeq \set{x' \in X}{d(x, x') < r}$ for the corresponding open ball.
We will often assume that $X$ is complete and separable, and occasionally we will specialise to the case of $X$ being a separable Banach or Hilbert space.

\subsection{Measures of non-compactness and intersection theorems}

Our approach in \Cref{sec:positive-radius_preorder} will make much use of measures of non-compactness and intersection theorems;
see \citet[Sections~7.5--7.8]{MalkowskyRakocevic2019} for a thorough treatment of these concepts and their properties.

Briefly, given $A \subseteq X$, its \defterm{separation} (or \defterm{Istr\u{a}\c{t}escu}) \defterm{measure of non-compactness} is
\begin{equation}
	\label{eq:separation_measure_nc}
	\gamma(A) \defeq \inf \Set{ r \geq 0 }{ \text{there is no $(x_{n})_{n \in \Naturals} \subseteq A$ with } \inf_{\substack{ m, n \in \Naturals \\ m \neq n }} d(x_{m}, x_{n}) \geq r } .
\end{equation}
This is an increasing function with respect to inclusion of sets, is finite precisely when $A$ is bounded, and is zero precisely when $A$ is pre-compact.
The function $\gamma$ is bi-Lipschitz equivalent with several other measures of non-compactness such as the set (or Kuratowski) measure of non-compactness and the ball (or Hausdorff) measure of non-compactness.

\begin{theorem}[Generalised intersection theorem]
	\label{thm:intersection_theorem}
	Let $(A_{n})_{n \in \Naturals}$ be a decreasingly nested sequence of non-empty, closed subsets of a topological space $X$ and let $A \defeq \bigcap_{n \in \Naturals} A_{n}$.
	\begin{enumerate}[label=(\alph*)]
		\item
		\label{item:intersection_theorem_Cantor_1}
		(Cantor)
		If each $A_{n}$ is compact, then $A$ is non-empty and compact.

		\item
		\label{item:intersection_theorem_Cantor_2}
		(Cantor)
		If $X$ is a complete metric space and $\diam(A_{n}) \to 0$ as $n \to \infty$, then $A$ is a singleton.

		\item
		\label{item:intersection_theorem_Kuratowski}
		(Kuratowski)
		If $X$ is a complete metric space and $\gamma(A_{n}) \to 0$ as $n \to \infty$, then $A$ is non-empty and compact.
	\end{enumerate}
\end{theorem}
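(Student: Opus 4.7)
The plan is to prove the three parts in the stated order, using classical techniques while being careful to use only the hypotheses available in each setting.

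For \ref{item:intersection_theorem_Cantor_1}, I would argue by contradiction using the finite intersection property. Suppose $A = \emptyset$. Then the sets $U_n \defeq A_1 \setminus A_n$ are open in $A_1$ (since each $A_n$ is closed in $X$, hence relatively closed in $A_1$) and $\bigcup_n U_n = A_1 \setminus A = A_1$. Compactness of $A_1$ produces a finite subcover; by the nested structure of $(A_n)$, the sets $U_n$ are also nested (increasingly), so a single $U_N$ covers $A_1$, giving $A_N = \emptyset$, contradicting the hypothesis. Compactness of $A$ then follows since $A$ is a closed subset of the compact set $A_1$.

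For \ref{item:intersection_theorem_Cantor_2}, I would choose any $x_n \in A_n$. Nesting plus $\diam(A_n) \to 0$ makes $(x_n)_{n \in \Naturals}$ Cauchy, so by completeness of $X$ there exists a limit $x \in X$. For each fixed $N$, the tail $(x_n)_{n \geq N}$ lies in $A_N$ and $A_N$ is closed, so $x \in A_N$; thus $x \in A$. Uniqueness is immediate: any $y \in A$ satisfies $d(x, y) \leq \diam(A_n) \to 0$, forcing $x = y$.

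For \ref{item:intersection_theorem_Kuratowski}, the key observation is that $\gamma$ is monotone under inclusion and is insensitive to the addition or removal of finitely many points from an infinite set (this follows directly from the definition \eqref{eq:separation_measure_nc}, since finitely many points cannot help form an infinite sequence with separation $\geq r$). Pick $x_n \in A_n$ and let $S \defeq \{x_n : n \in \Naturals\}$. For each $N$, the set $S$ differs from $S \cap A_N = \{x_n : n \geq N\}$ by finitely many points, so $\gamma(S) = \gamma(S \cap A_N) \leq \gamma(A_N) \to 0$, whence $\gamma(S) = 0$ and $S$ is pre-compact. Extracting a convergent subsequence $x_{n_k} \to x$ and invoking the closedness of each $A_N$ (using the tail argument from part \ref{item:intersection_theorem_Cantor_2}) yields $x \in A$. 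For compactness of $A$, monotonicity gives $\gamma(A) \leq \gamma(A_n) \to 0$, so $A$ is pre-compact; being also closed in the complete metric space $X$, it is compact.

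The main technical point, and the place where care is warranted, is the insensitivity of $\gamma$ to finite modifications in part \ref{item:intersection_theorem_Kuratowski}; this is what lets us conclude that \emph{the full sequence} $(x_n)$ has pre-compact range even though only its tails lie in the sets with small $\gamma$. Everything else is a straightforward application of the Cantor-style arguments together with the standard facts about $\gamma$ quoted in the surrounding text.
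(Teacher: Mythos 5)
The paper states this theorem without proof, citing \citet[Sections~7.5--7.8]{MalkowskyRakocevic2019} for background on measures of non-compactness, so there is no in-paper argument to compare against. Your proof is correct on all three parts: the finite-intersection-property argument for (a), the Cauchy-sequence argument for (b), and the $\gamma$-based argument for (c) are all sound. In part (c), the observation you rightly single out as the crux --- that $\gamma$ is unchanged by removal of finitely many points, so that $\gamma\bigl(\{x_n : n \in \Naturals\}\bigr) = \gamma\bigl(\{x_n : n \geq N\}\bigr) \leq \gamma(A_N) \to 0$ --- is the step that makes it work, and your justification is valid: for $r > 0$, an $r$-separated infinite sequence necessarily has distinct terms, so deleting finitely many of them leaves an $r$-separated infinite sequence, whence $\gamma$ does not drop after deletion; the reverse inequality is just monotonicity. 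One small inaccuracy to note: you write $S \cap A_N = \{x_n : n \geq N\}$, but this need not be an equality, since some $x_m$ with $m < N$ could happen to lie in $A_N$. Your argument only needs the inclusion $\{x_n : n \geq N\} \subseteq A_N$ together with the fact that $S$ and $\{x_n : n \geq N\}$ differ by finitely many points, both of which hold, so the proof is unaffected.
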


\subsection{Measure-theoretic concepts}

Given a metric space $X$, $\prob{X}$ denotes the set of all probability measures on $\Borel{X}$.
Absolute continuity of $\mu$ with respect to $\nu$ is denoted $\mu \ll \nu$.
The \defterm{topological support} of $\mu \in \prob{X}$ is
\begin{equation}
	\label{eq:supp_mu}
	\supp(\mu) \defeq \set{ x \in X }{ \text{for all $r > 0$, } \crcdf{x}{r} > 0 } ,
\end{equation}
which is always a closed subset of $X$, and is non-empty when $X$ is separable (or, equivalently, second countable or Lindel\"of) \citep[Theorem~12.14]{AliprantisBorder2006}.

The $n$-dimensional Lebesgue measure on $\Reals^{n}$ will be denoted $\lambda^{n}$.

The quantity $\crcdf{x}{r}$ will play a major role in this work, especially when thought of as a function of $r > 0$ for various choices of $x \in X$;
we shall call the map $r \mapsto \crcdf{x}{r}$ the \defterm{radial cumulative distribution function} (RCDF) and some of its key properties are given in \Cref{lem:RCDF} and \Cref{cor:RCDF_spherically_nonatomic}.

\subsection{Order-theoretic concepts}

We summarise here some basic terms from order theory;
for a comprehensive introduction to order theory, see e.g.\ \citet{Davey2022Introduction}.

In the course of this work, the set $X$ will be equipped with various \emph{preorders} $\preceq$, i.e.\ relations satisfying both
\begin{enumerate}[label=(\alph*)]
	\item \emph{reflexivity}:
	for all $x \in X$, $x \preceq x$;
	and
	\item \emph{transitivity}:
	for all $x, y, z \in X$,
	if both $x \preceq y$ and $y \preceq z$, then $x \preceq z$.
\end{enumerate}
For any such preorder, we will write $x \asymp x'$ if both $x \preceq x'$ and $x \succeq x'$ hold true, in which case $x$ and $x'$ are called \emph{equivalent}\footnote{A preorder $\preceq$ is called a \emph{partial order} if it is \emph{antisymmetric}, i.e.\ if $x \asymp x' \implies x = x'$, but almost none of the preorders that we consider will actually be partial orders.} in the preorder;
we write $x \prec x'$ if $x \preceq x'$ but $x \not\succeq x'$.

If at least one of $x \preceq x'$ and $x \succeq x'$ holds true, then we call $x$ and $x'$ \emph{comparable};
if neither holds, then we call them \emph{incomparable} and write $x \incomp x'$.
A preorder $\preceq$ is \emph{total} or \emph{linear} if there are no incomparable elements.
A subset of $X$ on which $\preceq$ is total is called a \emph{chain}, and a subset for which every two distinct elements are incomparable is called an \emph{antichain}.

We highlight and contrast two notions of a ``biggest'' element for a preorder:

\begin{definition}
	\label{defn:greatest_maximal_element}
	Let $X$ be a set equipped with a preorder $\preceq$.
	\begin{enumerate}[label=(\alph*)]
		\item $g \in X$ is a \defterm{greatest element} if, for every $x \in X$, $g \succeq x$.
		\item $m \in X$ is a \defterm{maximal element} if, whenever $x \in X$ is such that $m \preceq x$, it follows that $m \succeq x$ (and hence $m \asymp x$).
		Equivalently, $m$ is maximal if there is no $x \in X$ with $x \succ m$.
		\item $u \in X$ is an \defterm{upper bound} for $A \subseteq X$ if, for all $x \in A$, $u \succeq x$.
	\end{enumerate}
\end{definition}

Note in particular that a greatest element is also a maximal element, but it must additionally be comparable to (and dominate) every element of $X$.
On the other hand, a maximal element is only required to dominate those elements of $X$ with which it is comparable, and those elements could constitute a rather small subset of $X$.

The most famous statement about the existence of maximal elements is \emph{Zorn's lemma}:
under the Axiom of Choice, if $(X, \preceq)$ is a preordered space in which every chain $Y \subseteq X$ has an upper bound, then $X$ has at least one maximal element.
However, Zorn's lemma says nothing about the existence of greatest elements.

We write $\upclosure Y \defeq \set{ x \in X }{ x \succeq y \text{ for some } y \in Y }$ for the \defterm{upward closure} of $Y \subseteq X$, and further write, for $y \in X$, $\upclosure y \defeq \upclosure \{ y \} = \set{ x \in X }{ x \succeq y }$, so that $\upclosure Y = \bigcup_{y \in Y} \upclosure y$.

Finally, since many of the preorders we consider will be parametrised by radius $r \geq 0$, we will write $\preceq_{r}$ for the preorder, $\incomp_{r}$ for the induced relation of incomparability, $\upclosure_{r} Y$ for the upward closure of $Y$ with respect to $\preceq_{r}$, etc.

\section{Overview of related work}
\label{sec:related}

Modes, loosely understood as points of maximum probability, arise in many areas of pure and applied mathematics.
Two application domains where modes are particularly prominent are the analysis of the transition paths of random dynamical systems and the Bayesian approach to inverse problems.

The random dynamical systems setting is exemplified by mathematical models of chemical reactions using diffusion processes.
One is typically interested in the (rare) transitions of the process from one energy well or metastable state to another, and in particular one wishes to understand the transition paths that a diffusion process is most likely to take.
This amounts to a study of the modes of the law $\mu$ of the diffusion process on the associated path space $X$;
e.g.\ for a molecule consisting of $n$ atoms in three-dimensional space, $X = C([0, T]; \Reals^{3 n})$.
The modes of $\mu$ are understood as \emph{minimum-action paths}, and the behaviour of $\mu$ near the mode is quantified using Freidlin--Wentzell theory or large deviations theory \citep{DemboZeitouni1998,ERenVandenEijnden2004,FreidlinWentzell1998}.

In the Bayesian approach to inverse problems \citep{KaipioSomersalo2005, Stuart2010}, the reconstruction of an $X$-valued parameter of interest from observed $Y$-valued data is expressed in the form of a probability measure $\mu \in \prob{X}$, the \emph{posterior distribution}.
In many modern inverse problems, particularly those coupled to partial differential equations, the space $X$ is an infinite-dimensional function space or a high-dimensional discretisation of such a space, e.g.\ via a system of finite elements.

The posterior measure $\mu$ arises from three ingredients:
a \emph{prior measure} $\mu_{0} \in \prob{X}$, which encodes (subjective) beliefs about the parameter that are held in advance of knowing the observation mechanism or the specific data that are observed;
a \emph{likelihood model}, i.e.\ a family of probability measures $L(\quark|x) \in \prob{Y}$, one for each $x \in X$, which models how observed data would be expected to arise if the parameter value $x$ were the truth;
and a specific observed instance of the data, a point $y \in Y$.
Strictly speaking, the posterior measure $\mu$ is defined as the disintegration (conditional distribution) of the joint measure $\nu (\rd x, \rd y) \defeq L(\rd y|x) \mu_{0}(\rd x) \in \prob{X \times Y}$ along the $y$-fibre \citep{ChangPollard1997}.
For simplicity, however, we often concentrate on the case that $\mu$ has a density  with respect to $\mu_{0}$ given by Bayes' formula,
\begin{equation}
	\label{eq:Bayes}
	\mu (\rd x) = \frac{ \exp(- \Phi(x; y)) \, \mu_{0}(\rd x) }{ \int_{X} \exp(- \Phi(x'; y)) \, \mu_{0}(\rd x') } ,
\end{equation}
where $\Phi \colon X \times Y \to \Reals$ is called the \emph{potential}.
In simple settings with $\dim Y < \infty$, the Lebesgue probability density of $L(\quark|x)$ is proportional to $\exp( - \Phi(x; \quark))$ and $\Phi$ can be interpreted as a non-negative misfit functional.
The case of infinite-dimensional data, $\dim Y = \infty$, is considerably more subtle and does not generally admit a density for $\mu$ with respect to $\mu_{0}$ as in \eqref{eq:Bayes};
see e.g.\ \citet[Remark~3.8]{Stuart2010} and \citet[Remark~9]{Lasanen2012_I}.

Since the full posterior distribution $\mu$ can be a rather intractable object, it is often desirable to have access to a convenient point summary:
the two principal such point estimators are the \emph{conditional mean estimator} (i.e.\ the mean of $\mu$) and a \emph{maximum a posteriori estimator} (i.e.\ a mode, or point of maximum probability, for $\mu$), and here we focus on this second approach.
Heuristically, at least when $X = \Reals^{d}$, a MAP estimator is just an essential maximiser of the Lebesgue density of $\mu$, i.e.\ a minimiser of the sum of $\Phi(\quark; y)$ and the negative logarithm of the Lebesgue density of $\mu_{0}$.
However, this definition is not effective if we have no access to Lebesgue densities;
in particular, it makes no sense when $\dim X = \infty$ \citep[e.g.][]{Sudakov1959}.

To handle the general infinite-dimensional case, various definitions of modes / MAP estimators have been advanced over recent years, and we summarise them here.\footnote{The definitions of \citet{DashtiLawStuartVoss2013}, \citet{HelinBurger2015}, and \citet{Clason2019GeneralizedModes} were all stated in the case of a separable Banach space $X$, but they generalise easily to the metric setting, as given here.
Also, their definitions used open rather than closed balls.}
One approach \citep{DuerrBach1978} is to understand a mode of the path measure $\mu$ of a diffusion process as a minimiser of the \defterm{Onsager--Machlup (OM) functional} $I_{\mu}$ of $\mu$, which is defined by the relation
\begin{equation}
	\label{eq:Onsager--Machlup}
	\lim_{r \to 0} \frac{\crcdf{x}{r}}{\crcdf{x'}{r}} = \frac{\exp(-I_{\mu}(x))}{\exp(-I_{\mu}(x'))}
	\quad
	\text{for $x, x' \in X$.}
\end{equation}
In some sense, $I_{\mu}$ is a formal negative log-density for $\mu$, but it is in general only a partially defined extended-real-valued function.
For example, the OM functional of a Gaussian measure on a Hilbert space is finite only on the Cameron--Martin space.
The rigorous interpretation of modes as minimisers of $I_{\mu}$ requires considerable care, especially since in some cases it is not even possible to assign $+\infty$ as an exceptional value for $I_{\mu}$:
the ratio in \eqref{eq:Onsager--Machlup} may oscillate and fail to converge as $r \to 0$.

A \defterm{strong mode} of $\mu$ was defined by \citet{DashtiLawStuartVoss2013} to be any $x^{\star} \in X$ such that
\begin{align}
	\label{eq:strong_mode}
	\lim_{r \to 0} \frac{\crcdf{x^{\star}}{r}}{M_{r}} = 1 , \\
	\label{eq:M_r}
	M_{r} \defeq \sup_{x \in X} \crcdf{x}{r} .
\end{align}
(By \Cref{cor:no_unbounded_sequence_approximates_M_r}, separability of $X$ ensures that $\supp(\mu) \neq \varnothing$ and $M_{r} > 0$.)
Any strong mode must lie in $\supp(\mu)$, and the ratio in \eqref{eq:strong_mode} is at most 1 for every choice of $x^{\star} \in X$, so
\begin{equation}
	\label{eq:strong_mode_equivalent}
	\text{$x^{\star}$ is a strong mode}
	\iff
	\liminf_{r \to 0} \frac{\crcdf{x^{\star}}{r}}{M_{r}} \geq 1
	\iff
	\limsup_{r \to 0} \frac{M_{r}}{\crcdf{x^{\star}}{r}} \leq 1 .
\end{equation}
However, \citet{Clason2019GeneralizedModes} observed that even elementary absolutely continuous measures on $\Reals$ such as $\mu(E) \defeq \int_{E \cap [-1, 1]} \absval{x} \, \rd x$ do not have strong modes, even though the Lebesgue density of $\mu$ is clearly maximised at $\pm 1$.
Therefore, they call $x^{\star} \in X$ a \defterm{generalised strong mode} if, for every positive null sequence $(r_{n})_{n \in \Naturals}$, there exists a sequence $(x_{n})_{n \in \Naturals}$ converging to $x^{\star}$ such that
\begin{equation}
	\label{eq:generalised_strong_mode}
	\lim_{n \to \infty} \frac{\crcdf{x_{n}}{r_{n}}}{M_{r_{n}}} = 1 .
\end{equation}

Motivated by \eqref{eq:strong_mode_equivalent}, $x^{\star} \in \supp(\mu) \subseteq X$ is called a \defterm{weak mode} \citep{HelinBurger2015} if\footnote{In fact, \citet{HelinBurger2015} used ``$\lim$'' in place of ``$\limsup$'' in \eqref{eq:weak_mode}, implicitly assuming the existence of the limit.
However, as \citet{AyanbayevKlebanovLieSullivan2022_I} observe, this yields an unsatisfying definition because it excludes the case in which the ratio oscillates, while remaining bounded away from unity, from being a weak mode.
The desirable implication ``strong mode $\implies$ weak mode'' fails for the original ``$\lim$'' version of the definition, but holds for the ``$\limsup$'' version.}
\begin{equation}
	\label{eq:weak_mode}
	\limsup_{r \to 0} \frac{\crcdf{x'}{r}}{\crcdf{x^{\star}}{r}} \leq 1
	\text{ for all } x' \in X .
\end{equation}
As a point of terminology, \citet{HelinBurger2015} were primarily interested in the restricted case that $x' \in x^{\star} + E$, where $x^{\star} \in E$ and $E$ is a topologically dense linear subspace of a Banach space $X$, and \citet{LieSullivan2018} later called this case an \defterm{$E$-weak mode}.
Conversely, \citet{AyanbayevKlebanovLieSullivan2022_I} call $x^{\star}$ satisfying \eqref{eq:weak_mode} a \defterm{global weak mode}.
Since we are only going to consider global weak modes, we can simply call them \defterm{weak modes} without any ambiguity.

Under the assumption that the OM functional $I_{\mu}$ of $\mu$ is real-valued on $\varnothing \neq E \subseteq X$ and
\begin{equation}
	\label{eq:property_M}
	\text{for some $x \in E$ and all $x' \in X \setminus E$,} \quad \lim_{r \to 0} \frac{\crcdf{x'}{r}}{\crcdf{x}{r}} = 0 ,
\end{equation}
which \citet[Definition~3.1]{AyanbayevKlebanovLieSullivan2022_I} call \emph{property $M(\mu, E)$}, $I_{\mu}$ can be regarded as having the value $+ \infty$ on $X \setminus E$ and weak modes are precisely minimisers of this extended version of $I_{\mu}$.
This enabled \citet{AyanbayevKlebanovLieSullivan2022_I, AyanbayevKlebanovLieSullivan2022_II} to establish a stability and convergence theory for weak modes in terms of the $\Gamma$-convergence and equicoercivity of the associated OM functionals.\footnote{Frustratingly, while there are some situations in which strong modes can be characterised as minimisers of Onsager--Machlup functionals \citep{AgapiouBurgerDashtiHelin2018,DashtiLawStuartVoss2013}, there are also situations in which this correspondence breaks down, even when property $M(\mu, E)$ holds \citep[Example~B.5]{AyanbayevKlebanovLieSullivan2022_I}.}
Furthermore, as we show below in \Cref{lem:GWM_greatest_maximal}, weak modes are exactly the greatest elements of a natural preorder $\preceq_{0}$ on $X$, namely the one induced by the limiting ratios of masses of balls in the small-radius limit (\Cref{defn:analytic_small-radius_preorder}).

There are also local versions of the strong and weak modes \citep{AgapiouBurgerDashtiHelin2018}, in which $x^{\star}$ is only compared to points in a sufficiently small ball $\cball{x^{\star}}{\delta}$, $\delta > 0$, analogous to local maximisers of the Lebesgue probability density function / local minimisers of the OM functional.

For $\mu$ of the form \eqref{eq:Bayes} with $\mu_{0}$ Gaussian and $X$ Hilbert, \citet{DashtiLawStuartVoss2013} proved that $\mu$ has a strong mode by studying maximisers of the radius-$r$ ball mass $x \mapsto \crcdf{x}{r}$ for fixed $r > 0$ --- which we call radius-$r$ modes in \Cref{sec:positive-radius_preorder} --- and arguing that a sequence of such maximisers must converge to a strong mode.
The arguments of \citet{DashtiLawStuartVoss2013} assume the existence of radius-$r$ modes without proof; in \Cref{subsection:existence_of_radius_r}, we prove results on the existence of radius-$r$ modes in various settings but also provide examples that have no such radius-$r$ modes.
Despite the contributions of \citet{DashtiLawStuartVoss2013}, \citet{Kretschmann2019}, and \citet{KlebanovWacker2022} among others --- and our own offerings --- a surprising amount is still unknown about the existence of radius-$r$ modes, let alone weak and strong modes, even for ``nicely'' reweighted Gaussian measures on Banach spaces.

\section{The positive-radius preorder}
\label{sec:positive-radius_preorder}

\subsection{Definition and basic properties}

A probability measure on a metric space $X$ induces a family of preorders on $X$, one for each positive radius, in a very straightforward way:

\begin{definition}[Positive-radius preorder]
	\label{defn:positive_radius_preorder}
	Let $X$ be a metric space and let $\mu \in \prob{X}$.
	For each $r > 0$, define a relation $\preceq_{r}$ on $X$ by
	\begin{align}
		\label{eq:preceq_r}
		x \preceq_{r} x'
		& \iff \crcdf{x}{r} \leq \crcdf{x'}{r} .
	\end{align}
\end{definition}

It is almost trivial to verify that $\preceq_{r}$ satisfies the axioms for a preorder.
We will write $x \asymp_{r} x'$ if both $x \preceq_{r} x'$ and $x \succeq_{r} x'$ hold, and $x \incomp_{r} x'$ if neither $x \preceq_{r} x'$ nor $x \succeq_{r} x'$ hold.
In fact, though, incomparability never arises for this preorder:
totality of the usual order $\leq$ on $\Reals$ implies totality of $\preceq_{r}$ on $X$.
Totality implies that the maximal and greatest elements of $X$ with respect to $\preceq_{r}$ coincide (\Cref{lem:radius_r_mode}), which simplifies the discussion considerably.

Upward closures with respect to $\preceq_{r}$ are notably well behaved.
In particular, \Cref{lem:upper_closure_is_closed_and_bounded}\ref{lem:upper_closure_is_closed_and_bounded_2} says that the relation $\preceq_{r}$ is \emph{upper semicontinuous} \citep[p.44]{AliprantisBorder2006}.

\begin{lemma}[Closedness, boundedness, and non-compactness of upward closures]
	\label{lem:upper_closure_is_closed_and_bounded}
	Let $X$ be a metric space, let $\mu \in \prob{X}$, and fix $r > 0$.
	\begin{enumerate}[label=(\alph*)]
		\item \label{lem:upper_closure_is_closed_and_bounded_1}
		For each $t \geq 0$, $\set{ x' \in X }{ \crcdf{x'}{r} \geq t }$ is closed.
		\item \label{lem:upper_closure_is_closed_and_bounded_2}
		For each $x \in X$, $\upclosure_{r} x \defeq \set{ x' \in X }{ x' \succeq_{r} x }$ is closed.
		\item \label{lem:upper_closure_is_closed_and_bounded_3}
		For each $t > 0$, $\set{ x' \in X }{ \crcdf{x'}{r} \geq t }$ is bounded, with separation measure of non-compactness $\gamma( \set{ x' \in X }{ \crcdf{x'}{r} \geq t } ) \leq 2 r$.
		\item \label{lem:upper_closure_is_closed_and_bounded_4}
		For each $x \in X$ with $\crcdf{x}{r} > 0$, $\upclosure_{r} x$ is bounded with $\gamma(\upclosure_{r} x) \leq 2 r$.
	\end{enumerate}
\end{lemma}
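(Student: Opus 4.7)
The plan is to prove (a) first via upper semi-continuity of the ball-mass map $x' \mapsto \crcdf{x'}{r}$, and to obtain (b) as the specialisation with $t \defeq \crcdf{x}{r}$. Suppose $x_{n} \to x$ in $X$. The key elementary observation is the set-theoretic inclusion $\limsup_{n} \cball{x_{n}}{r} \subseteq \cball{x}{r}$: if $y \in \cball{x_{n_{k}}}{r}$ along some subsequence, then $d(x_{n_{k}}, y) \leq r$ passes to the limit (via continuity of the metric) to give $d(x, y) \leq r$. Because $\mu$ is a finite measure, the reverse Fatou lemma for sets then yields
\begin{equation*}
    \limsup_{n \to \infty} \crcdf{x_{n}}{r} \leq \mu\Bigl( \limsup_{n \to \infty} \cball{x_{n}}{r} \Bigr) \leq \crcdf{x}{r} ,
\end{equation*}
so $\crcdf{x_{n}}{r} \geq t$ for all $n$ forces $\crcdf{x}{r} \geq t$, proving (a). Part (b) is then immediate.

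For (c), I would use a disjoint-packing argument. The triangle inequality gives $\cball{y}{r} \cap \cball{y'}{r} = \emptyset$ whenever $d(y, y') > 2 r$ \emph{strictly}. Fix $t > 0$ and write $A \defeq \set{x' \in X}{\crcdf{x'}{r} \geq t}$. If there existed a sequence $(y_{n})_{n \in \Naturals} \subseteq A$ with $\inf_{m \neq n} d(y_{m}, y_{n}) > 2 r$, then the balls $\cball{y_{n}}{r}$ would be pairwise disjoint and each of $\mu$-mass $\geq t$, forcing $\mu(X) \geq \sum_{n \in \Naturals} t = \infty$ and contradicting $\mu \in \prob{X}$. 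Hence for every $s > 2 r$, no sequence in $A$ has all pairwise distances $\geq s$, which by the definition of $\gamma$ in \Cref{eq:separation_measure_nc} gives $\gamma(A) \leq 2 r$. Boundedness of $A$ then follows from the stated fact that $\gamma$ is finite precisely on bounded sets. Part (d) drops out by taking $t \defeq \crcdf{x}{r} > 0$, since $\upclosure_{r} x$ is exactly the super-level set $A$ for this choice.

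I do not anticipate any conceptual obstacle, but two small technical points deserve care. First, the fact that the balls are \emph{closed} is essential in (a): only then does $d(x_{n_{k}}, y) \leq r$ survive the limit, and $\limsup_{n} \cball{x_{n}}{r} \subseteq \cball{x}{r}$ genuinely holds (the analogous statement with open balls would be more delicate). Second, in (c) one really needs the \emph{strict} inequality $d(y_{m}, y_{n}) > 2 r$ to guarantee disjointness of closed balls (equality can leave a one-point intersection on the common boundary sphere), which is why the argument is organised around arbitrary $s > 2 r$ rather than $s = 2 r$ directly.
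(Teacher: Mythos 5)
Your proof is correct.  For parts (c) and (d), it is essentially the same argument as the paper's: a disjoint-packing contradiction giving $\gamma(A) \leq 2r$, with (d) as the specialisation $t = \crcdf{x}{r}$.  Your organisation is actually a little cleaner: you deduce the separation bound first and then obtain boundedness from the stated fact that $\gamma$ is finite exactly on bounded sets, whereas the paper runs a single contradiction argument starting from an unbounded sequence and then remarks that the same computation also bounds $\gamma$.  You are also more explicit about the $s > 2r$ versus $s = 2r$ issue — the need for \emph{strict} inequality to ensure disjointness of closed balls — which the paper glosses over slightly (it passes directly to a subsequence with $d(x_n,x_{n'}) > 2r$ and then asserts separation $2r$).

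For parts (a) and (b), the paper simply invokes upper semicontinuity of $x \mapsto \crcdf{x}{r}$, which it proves in \Cref{lem:RCDF}\ref{lem:RCDF_in_x} via the nested-ball identity $\cball{x}{r} = \bigcap_{n}\cball{x}{r + d(x,x_n)}$ together with monotone continuity of $\mu$ and the inclusion $\cball{x}{r + d(x,x_n)} \supseteq \cball{x_n}{r}$.  Your route — the set-theoretic inclusion $\limsup_n \cball{x_n}{r} \subseteq \cball{x}{r}$ followed by the reverse Fatou lemma for a finite measure — is a genuinely different, equally standard, and self-contained proof of the same semicontinuity.  Both hinge on the balls being closed, as you correctly flag.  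Part (b) as the special case $t = \crcdf{x}{r}$ matches the paper exactly.
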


\begin{proof}
	Claim \ref{lem:upper_closure_is_closed_and_bounded_1} is immediate from the upper semicontinuity of the map $x \mapsto \crcdf{x}{r}$ (\Cref{lem:RCDF}\ref{lem:RCDF_in_x}), and \ref{lem:upper_closure_is_closed_and_bounded_2} is a special case of claim \ref{lem:upper_closure_is_closed_and_bounded_1}.

	Now fix $t > 0$ and suppose for a contradiction that $(x_{n})_{n \in \Naturals}$ is an unbounded sequence in $\set{ x' \in X }{ \crcdf{x'}{r} \geq t }$.
	By passing to a subsequence if necessary, we may assume that $d(x_{n}, x_{n'}) > 2 r$ for all distinct $n, n' \in \Naturals$.
	We thus obtain the contradiction that
	\begin{equation*}
		1 = \mu(X) \geq \mu \left( \biguplus_{n \in \Naturals} \cball{x_{n}}{r} \right) = \sum_{n \in \Naturals} \crcdf{x_{n}}{r} \geq \sum_{n \in \Naturals} t = \infty .
	\end{equation*}
	This shows that $\set{ x' \in X }{ \crcdf{x'}{r} \geq t }$ must be bounded and also that it admits no infinite subset with separation $2 r$, thus establishing \ref{lem:upper_closure_is_closed_and_bounded_3}, of which \ref{lem:upper_closure_is_closed_and_bounded_4} is a special case.
\end{proof}

\subsection{Existence and absence of greatest elements}
\label{subsection:existence_of_radius_r}

Our first aim is to establish existence of greatest elements for $\preceq_{r}$, which we also call \emph{radius-$r$ modes}.
Such points can be seen as approximate modes\footnote{The intuition that radius-$r$ modes are approximate modes must be treated sceptically.
For example, consider $\mu \in \prob{X}$ with bimodal continuous Lebesgue density $\rho(x) \propto \max \{ 0, 1 - 4 (x - 1)^{2} \} + \max \{ 0, 1 - 4 (x + 1)^{2} \}$, for which a radius-$1$ mode is located at $0$, which is neither a maximiser of $\rho$ nor even in $\supp(\mu)$.} with respect to the positive radius / spatial resolution $r$;
only in the next section will we attempt to take the limit as $r \searrow 0$.

\Cref{lem:radius_r_mode} now gives several equivalent conditions for a point to be a radius-$r$ mode.
The intersection criterion \ref{item:radius_r_mode_seq_intersection} will prove especially helpful in what follows, in the sense that we establish existence of radius-$r$ modes by showing that intersections of this type are non-empty.

\begin{lemma}[Characterisation of radius-$r$ modes]
	\label{lem:radius_r_mode}
	Let $X$ be any metric space, let $\mu \in \prob{X}$, and let $r > 0$.
	As in \eqref{eq:M_r}, let $M_{r} \defeq \sup_{x \in X} \crcdf{x}{r}$.
	Then the following are equivalent and if one (and hence any) holds, then $x_{r}^{\star} \in X$ is called a \defterm{radius-$r$ mode}:
	\begin{enumerate}[label=(\alph*)]
		\item
		\label{item:radius_r_mode_maximal}
		$x_{r}^{\star}$ is a $\preceq_{r}$-maximal element;

		\item
		\label{item:radius_r_mode_greatest}
		$x_{r}^{\star}$ is a $\preceq_{r}$-greatest element;
		
		\item
		\label{item:radius_r_mode_full_intersection}
		$x_{r}^{\star} \in \bigcap_{x \in X} \upclosure_{r} x$;
		
		\item
		\label{item:radius_r_mode_seq_intersection}
		$x_{r}^{\star} \in \bigcap_{n \in \Naturals} \upclosure_{r} x_{n}$ for some sequence $(x_{n})_{n \in \Naturals} \subseteq X$ with $\crcdf{x_{n}}{r} \nearrow M_{r}$ as $n \to \infty$;

		\item
		\label{item:radius_r_mode_Mr}
		$\crcdf{x_{r}^{\star}}{r} = M_{r}$.
	\end{enumerate}
\end{lemma}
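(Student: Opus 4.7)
The plan is to establish all five equivalences by pivoting around the simple numerical condition \ref{item:radius_r_mode_Mr}, exploiting the totality of $\preceq_{r}$ noted just after \Cref{defn:positive_radius_preorder}. First, totality makes \ref{item:radius_r_mode_maximal} and \ref{item:radius_r_mode_greatest} equivalent immediately: every element of $X$ is comparable to $x_{r}^{\star}$, so the absence of an element strictly dominating a maximal $x_{r}^{\star}$ forces $x_{r}^{\star}$ to dominate every other element. Equivalence of \ref{item:radius_r_mode_greatest} and \ref{item:radius_r_mode_full_intersection} is then a direct unpacking of definitions: $x_{r}^{\star}$ is greatest iff $x_{r}^{\star} \succeq_{r} x$ for every $x \in X$, iff $x_{r}^{\star} \in \upclosure_{r} x$ for every $x$, iff $x_{r}^{\star} \in \bigcap_{x \in X} \upclosure_{r} x$.

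Next I would connect \ref{item:radius_r_mode_greatest} to \ref{item:radius_r_mode_Mr}. The greatest-element condition translates to $\crcdf{x_{r}^{\star}}{r} \geq \crcdf{x}{r}$ for every $x \in X$, which is just $\crcdf{x_{r}^{\star}}{r} \geq M_{r}$; combined with the automatic bound $\crcdf{x_{r}^{\star}}{r} \leq M_{r}$ from the definition of $M_{r}$, this yields $\crcdf{x_{r}^{\star}}{r} = M_{r}$, and the identity trivially recovers the dominating inequality in the other direction.

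The only remaining equivalence is \ref{item:radius_r_mode_seq_intersection} $\iff$ \ref{item:radius_r_mode_Mr}. For \ref{item:radius_r_mode_Mr} $\implies$ \ref{item:radius_r_mode_seq_intersection}, since $M_{r} = \sup_{x \in X} \crcdf{x}{r}$ is a finite supremum (bounded by $1$), there exists $(y_{n})_{n \in \Naturals} \subseteq X$ with $\crcdf{y_{n}}{r} > M_{r} - 1/n$, from which a monotone increasing sequence $(x_{n})_{n \in \Naturals}$ with $\crcdf{x_{n}}{r} \nearrow M_{r}$ is easily extracted; then $\crcdf{x_{r}^{\star}}{r} = M_{r} \geq \crcdf{x_{n}}{r}$ gives $x_{r}^{\star} \succeq_{r} x_{n}$ and hence $x_{r}^{\star} \in \upclosure_{r} x_{n}$ for every $n$. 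For \ref{item:radius_r_mode_seq_intersection} $\implies$ \ref{item:radius_r_mode_Mr}, membership of $x_{r}^{\star}$ in each $\upclosure_{r} x_{n}$ yields $\crcdf{x_{r}^{\star}}{r} \geq \crcdf{x_{n}}{r}$, and passing to the limit gives $\crcdf{x_{r}^{\star}}{r} \geq M_{r}$, while the reverse inequality is automatic from the definition of $M_{r}$.

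No step poses a real obstacle, since the lemma is in essence four equivalent reformulations (order-theoretic, set-theoretic, sequential, and numerical) of the attainment of $\sup_{x \in X} \crcdf{x}{r}$ at $x_{r}^{\star}$, with totality of $\preceq_{r}$ doing all the heavy lifting. The one point worth a little care is the extraction of the monotone sequence in \ref{item:radius_r_mode_Mr} $\implies$ \ref{item:radius_r_mode_seq_intersection}, but this is routine because $M_{r} \in [0, 1]$ is finite. The genuine value of the lemma lies in recording condition \ref{item:radius_r_mode_seq_intersection}, which recasts the existence of a radius-$r$ mode as the non-emptiness of a countable intersection of closed sets (using \Cref{lem:upper_closure_is_closed_and_bounded}) — exactly the form to which the Cantor and Kuratowski intersection theorems of \Cref{thm:intersection_theorem} can be applied in the forthcoming existence results.
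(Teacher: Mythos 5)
Your proof is correct and follows essentially the same approach as the paper's: both pivot on totality of $\preceq_{r}$ for (a)$\iff$(b), unpack the definition of greatest for (b)$\iff$(c), and link (d) and (e) by passing the inequality $\crcdf{x_{r}^{\star}}{r} \geq \crcdf{x_{n}}{r}$ to the limit. The only (immaterial) difference is bookkeeping: you prove the pairwise equivalences (b)$\iff$(e) and (d)$\iff$(e), whereas the paper closes a cycle (c)$\implies$(d)$\implies$(e)$\implies$(b); also, in your (e)$\implies$(d) step, you could simply take the constant sequence $x_{n} \equiv x_{r}^{\star}$ rather than extracting a monotone approximating sequence.
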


\begin{proof}
	\mbox{(\ref{item:radius_r_mode_maximal}$\iff$\ref{item:radius_r_mode_greatest})}$\quad$
	This equivalence holds because $\preceq_{r}$ is a total preorder.
	
	\noindent\mbox{(\ref{item:radius_r_mode_greatest}$\iff$\ref{item:radius_r_mode_full_intersection})}
	This equivalence is simply a restatement of the definition of being greatest.
	
	\noindent\mbox{(\ref{item:radius_r_mode_full_intersection}$\implies$\ref{item:radius_r_mode_seq_intersection})}
	This implication is obvious, since $\bigcap_{n \in \Naturals} \upclosure_{r} x_{n} \supseteq \bigcap_{x \in X} \upclosure_{r} x$.
	
	\noindent\mbox{(\ref{item:radius_r_mode_seq_intersection}$\implies$\ref{item:radius_r_mode_Mr})}
	Let $(x_{n})_{n \in \Naturals}$ be a sequence in $X$ with $\crcdf{x_{n}}{r} \nearrow M_{r}$ and let $x_{r}^{\star} \in \bigcap_{n \in \Naturals} \upclosure_{r} x_{n}$.
	Then, for each $n$, $\crcdf{x_{r}^{\star}}{r} \geq \crcdf{x_{n}}{r}$, and taking the limit as $n \to \infty$ shows that $\crcdf{x_{r}^{\star}}{r} \geq M_{r}$.
	The definition of $M_r$ implies that $\crcdf{x_{r}^{\star}}{r} \leq M_r$, and so $\crcdf{x_{r}^{\star}}{r} = M_r$.
	
	\noindent\mbox{(\ref{item:radius_r_mode_Mr}$\implies$\ref{item:radius_r_mode_greatest})}
	Suppose that $x_{r}^{\star}$ has $\crcdf{x_{r}^{\star}}{r} = M_r$.
	Then, for any $x \in X$, $\crcdf{x}{r} \leq M_{r}$, i.e.\ $x \preceq_{r} x_{r}^{\star}$.
	Thus, $x_{r}^{\star}$ is $\preceq_{r}$-greatest.
\end{proof}

A very simple existence result for radius-$r$ modes is the following:

\begin{proposition}[Existence of radius-$r$ modes in compact spaces]
	\label{prop:compact_yields_r-mode}
	Let $X$ be a compact metric space, let $\mu \in \prob{X}$, and let $r > 0$.
	Then $\preceq_{r}$ has at least one radius-$r$ mode $x_{r}^{\star} \in X$.
\end{proposition}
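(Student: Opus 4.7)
The plan is to invoke criterion \ref{item:radius_r_mode_seq_intersection} of \Cref{lem:radius_r_mode} and apply the Cantor intersection theorem \Cref{thm:intersection_theorem}\ref{item:intersection_theorem_Cantor_1}. Concretely, I would choose a maximising sequence $(x_n)_{n \in \Naturals} \subseteq X$ with $\crcdf{x_n}{r} \nearrow M_r$ (which exists because $M_r \leq \mu(X) = 1 < \infty$; the case $X = \varnothing$ is vacuous, and the case $M_r = 0$ makes every point trivially a radius-$r$ mode) and then show that
\begin{equation*}
    \bigcap_{n \in \Naturals} \upclosure_{r} x_{n}
\end{equation*}
is non-empty.

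The key step is to verify the hypotheses of the Cantor intersection theorem for the family $A_n \defeq \upclosure_{r} x_{n}$. First, each $A_n$ is closed by \Cref{lem:upper_closure_is_closed_and_bounded}\ref{lem:upper_closure_is_closed_and_bounded_2}, and since $X$ is compact, each $A_n$ is compact. Second, each $A_n$ is non-empty because $x_n \in A_n$ by reflexivity of $\preceq_r$. Third, the sequence is decreasingly nested: if $x \succeq_r x_{n+1}$, then by transitivity and the fact that $x_{n+1} \succeq_r x_n$ (which follows from $\crcdf{x_{n+1}}{r} \geq \crcdf{x_n}{r}$) we get $x \succeq_r x_n$, hence $A_{n+1} \subseteq A_n$. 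The Cantor intersection theorem then yields a point $x_r^{\star}$ in the intersection, which by \Cref{lem:radius_r_mode}\ref{item:radius_r_mode_seq_intersection} is a radius-$r$ mode.

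There is no real obstacle here: compactness of $X$ transfers directly to the closed upward-closure sets via \Cref{lem:upper_closure_is_closed_and_bounded}, and the monotonicity of $\crcdf{x_n}{r}$ supplies the nesting for free. The only small point to mind is the choice of an \emph{increasing} approximating sequence, which is always possible by passing to a subsequence of any sequence with $\crcdf{x_n}{r} \to M_r$. Note that this argument does not require the stronger Kuratowski intersection theorem \Cref{thm:intersection_theorem}\ref{item:intersection_theorem_Kuratowski}; that refinement will be the natural tool for the more delicate non-compact existence results later in \Cref{subsection:existence_of_radius_r}.
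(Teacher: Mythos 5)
Your proof is correct, but it is not the proof the paper itself writes out. The paper's \emph{self-contained} argument for \Cref{prop:compact_yields_r-mode} is more elementary: it just observes that $x \mapsto \crcdf{x}{r}$ is upper semicontinuous (\Cref{lem:RCDF}\ref{lem:RCDF_in_x}) and therefore attains a global maximum on the compact space $X$; that maximiser is the radius-$r$ mode by \Cref{lem:radius_r_mode}\ref{item:radius_r_mode_Mr}. That said, the paper does explicitly remark that the proposition is ``a special case of \Cref{thm:r_greatest}\ref{item:r_greatest_HB},'' and the proof of that theorem goes through \Cref{lem:r_greatest_T_compact}, which is exactly your Cantor-intersection argument on the nested upward closures $\upclosure_r x_n$. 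So you have in effect reproduced the general-machinery route rather than the shortcut. The trade-off is clear: the direct u.s.c.\ argument is shorter and uses less structure, whereas your intersection-theorem argument is heavier here but is precisely the template that generalises to the non-compact settings in \Cref{thm:r_greatest} (Heine--Borel, Kuratowski, weak topology), as you correctly anticipate in your final paragraph. One small remark: for a compact (hence separable) $X$, $\supp(\mu) \neq \varnothing$ forces $M_r > 0$, so your case split on $M_r = 0$ is not actually needed, though it does no harm.
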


\begin{proof}
	This is a special case of \Cref{thm:r_greatest}\ref{item:r_greatest_HB}, and also follows from \citet[Theorem~2.44]{AliprantisBorder2006}, but a self-contained proof is given by observing that the map $\crcdf{\quark}{r} \colon X \to [0, 1]$ is upper semicontinuous (\Cref{lem:RCDF}\ref{lem:RCDF_in_x}) and hence has at least one global maximiser $x_{r}^{\star}$ in the compact space $X$.
\end{proof}

We now adopt a very different approach to establishing the existence of radius-$r$ modes, one based on applying intersection theorems to upward closures with respect to $\preceq_{r}$.
We begin with a very general lemma;
when \Cref{lem:r_greatest_T_compact} is used in practice, $\mathcal{T}$ will often be the metric topology, but another useful case is the weak topology of a Banach space.

\begin{lemma}
	\label{lem:r_greatest_T_compact}
	Let $X$ be a separable metric space, $\mu \in \prob{X}$, and $r > 0$.
	Suppose that $\mathcal{T}$ is a topology on $X$ such that, for some sequence $(x_{n})_{n \in \Naturals} \subset X$ with $\crcdf{x_{n}}{r} \nearrow M_{r} > 0$, $\upclosure_{r} x_{n}$ is $\mathcal{T}$-closed and $\mathcal{T}$-compact for all sufficiently large $n$.
	Then the set $\mathfrak{M}_{r}$ of radius-$r$ modes for $\mu$ is non-empty, $\mathcal{T}$-compact, and $\mathfrak{M}_{r} = \bigcap_{n \in \Naturals} \upclosure_{r} x_n$.
\end{lemma}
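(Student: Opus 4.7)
The plan is to apply Cantor's intersection theorem (\Cref{thm:intersection_theorem}\ref{item:intersection_theorem_Cantor_1}) to the sequence $(\upclosure_{r} x_{n})_{n \in \Naturals}$ and then invoke the characterisation of radius-$r$ modes given by \Cref{lem:radius_r_mode}.

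First, I would check that the sequence $(\upclosure_{r} x_{n})_{n \in \Naturals}$ is a decreasingly nested sequence of non-empty sets. Since the real sequence $\crcdf{x_{n}}{r}$ is increasing, whenever $n \leq m$ we have $x_{n} \preceq_{r} x_{m}$, and then transitivity of $\preceq_{r}$ gives $\upclosure_{r} x_{m} \subseteq \upclosure_{r} x_{n}$; non-emptiness is automatic since $x_{n} \in \upclosure_{r} x_{n}$. By hypothesis there exists $N \in \Naturals$ such that $\upclosure_{r} x_{n}$ is both $\mathcal{T}$-closed and $\mathcal{T}$-compact for every $n \geq N$, and discarding the finitely many initial sets does not change the intersection. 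Applying \Cref{thm:intersection_theorem}\ref{item:intersection_theorem_Cantor_1} to the tail yields that
\begin{equation*}
	\bigcap_{n \in \Naturals} \upclosure_{r} x_{n} = \bigcap_{n \geq N} \upclosure_{r} x_{n}
\end{equation*}
is non-empty and $\mathcal{T}$-compact.

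It remains to identify this intersection with $\mathfrak{M}_{r}$. The inclusion $\bigcap_{n \in \Naturals} \upclosure_{r} x_{n} \subseteq \mathfrak{M}_{r}$ is exactly the implication \mbox{(\ref{item:radius_r_mode_seq_intersection}$\implies$\ref{item:radius_r_mode_greatest})} of \Cref{lem:radius_r_mode}. For the converse inclusion, if $x^{\star} \in \mathfrak{M}_{r}$ then $\crcdf{x^{\star}}{r} = M_{r} \geq \crcdf{x_{n}}{r}$ for every $n$ by \Cref{lem:radius_r_mode}\ref{item:radius_r_mode_Mr}, so $x^{\star} \succeq_{r} x_{n}$ and hence $x^{\star} \in \upclosure_{r} x_{n}$ for every $n$. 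Combining the two inclusions gives $\mathfrak{M}_{r} = \bigcap_{n \in \Naturals} \upclosure_{r} x_{n}$, and the $\mathcal{T}$-compactness and non-emptiness of this intersection already noted transfer to $\mathfrak{M}_{r}$.

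There is no serious obstacle here: the statement is essentially a packaging of Cantor's theorem with the intersection criterion of \Cref{lem:radius_r_mode}\ref{item:radius_r_mode_seq_intersection}, and the hypotheses have been tailored so that the only nontrivial input — the existence of an approximating sequence for $M_{r}$ whose upward closures are eventually $\mathcal{T}$-compact — is assumed outright rather than established. The role of separability of $X$ and of $M_{r} > 0$ is only to ensure that the family of such approximating sequences is non-vacuous (cf.\ \Cref{cor:no_unbounded_sequence_approximates_M_r}).
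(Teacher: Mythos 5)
Your proof is correct and follows the paper's own argument exactly: nest the upward closures, invoke Cantor's intersection theorem on the eventually $\mathcal{T}$-closed and $\mathcal{T}$-compact tail, and identify the resulting intersection with $\mathfrak{M}_{r}$ via \Cref{lem:radius_r_mode}. The only difference is that you spell out both inclusions for the identification $\mathfrak{M}_{r} = \bigcap_{n} \upclosure_{r} x_{n}$, whereas the paper defers wholesale to \Cref{lem:radius_r_mode}; this is just a matter of exposition.
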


\begin{proof}
	Separability of $X$ implies that $M_{r} > 0$.
	Let $(x_{n})_{n \in \Naturals}$ be such that $\crcdf{x_{n}}{r} \nearrow M_{r}$ as $n \to \infty$.
	The sets $\upclosure_{r} x_{n}$ are non-empty;
	since the sequence $\bigl(\crcdf{x_{n}}{r}\bigr)_{n \in \Naturals}$ is increasing, $\upclosure_{r} x_{n + 1} \subseteq \upclosure_{r} x_{n}$ for each $n$, i.e.\ they are decreasingly nested;
	by hypothesis, for sufficiently large $n$, they are also $\mathcal{T}$-closed and $\mathcal{T}$-compact.
	Therefore, by Cantor's intersection theorem (\Cref{thm:intersection_theorem}\ref{item:intersection_theorem_Cantor_1}), their intersection is non-empty and $\mathcal{T}$-compact.
	This intersection is precisely the set $\mathfrak{M}_{r}$ of radius-$r$ modes, as already shown by \Cref{lem:radius_r_mode}.
\end{proof}

\begin{theorem}[Existence of radius-$r$ modes]
	\label{thm:r_greatest}
	Let $X$ be a separable metric space, $\mu \in \prob{X}$, and $r > 0$.
	Let $\mathfrak{M}_{r}$ denote the set of radius-$r$ modes for $\mu$.
	\begin{enumerate}[label=(\alph*)]
		\item
		\label{item:r_greatest_HB}
		Suppose that $X$ has the Heine--Borel property, i.e.\ that every closed and bounded subset of $X$ is compact.
		Then $\mathfrak{M}_{r}$ is non-empty and compact.

		\item
		\label{item:r_greatest_doubling}
		Suppose that $X$ is complete and that $\mu$ is a doubling measure, i.e.\ there exists a constant $C > 0$ such that $\crcdf{x}{2r} \leq C \crcdf{x}{r}$ for all $x \in X$ and $r > 0$.
		Then $\mathfrak{M}_{r}$ is non-empty and compact.

		\item
		\label{item:r_greatest_lower_bound}
		Suppose that $X$ is complete and there exists a point $o \in X$ and a function $f \colon (0, \infty)^{2} \to (0, \infty)$ such that
		\begin{equation}
			\label{eq:r_greatest_lower_bound}
			\text{for all $x \in \cball{o}{R}$,}
			\quad
			\crcdf{x}{\delta} \geq f(\delta, R) > 0 .
		\end{equation}
		Then $\mathfrak{M}_{r}$ is non-empty and compact.

		\item
		\label{item:r_greatest_vanishing_MNC}
		Suppose that $X$ is complete and that there exists $(x_{n})_{n \in \Naturals}$ with $\crcdf{x_{n}}{r} \nearrow M_{r}$ and $\gamma(\upclosure_{r} x_{n}) \to 0$ as $n \to \infty$.
		Then $\mathfrak{M}_{r}$ is non-empty and compact.

		\item
		\label{item:r_greatest_vanishing_diameter}
		Suppose that $X$ is complete and that there exists $(x_{n})_{n \in \Naturals}$ with $\crcdf{x_{n}}{r} \nearrow M_{r}$ and $\diam(\upclosure_{r} x_{n}) \to 0$ as $n \to \infty$.
		Then $\mathfrak{M}_{r}$ is a singleton.

		\item
		\label{item:r_greatest_weakly_compact}
		Suppose that $X$ is a Banach space and that there exists $(x_{n})_{n \in \Naturals}$ with $\crcdf{x_{n}}{r} \nearrow M_{r}$, and that $\upclosure_{r} x_{n}$ is weakly compact for all sufficiently large $n$.
		Then $\mathfrak{M}_{r}$ is non-empty and weakly compact.

		\item
		\label{item:r_greatest_convex}
		Suppose that $X$ is a reflexive Banach space and that there exists $(x_{n})_{n \in \Naturals}$ with $\crcdf{x_{n}}{r} \nearrow M_{r}$ and that $\upclosure_{r} x_{n}$ is convex for all sufficiently large $n$.
		Then $\mathfrak{M}_{r}$ is non-empty, weakly compact, and convex.
	\end{enumerate}
\end{theorem}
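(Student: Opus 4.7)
The plan is to apply \Cref{lem:r_greatest_T_compact} (or invoke \Cref{thm:intersection_theorem} directly where more convenient) to a sequence $(x_{n})_{n \in \Naturals}$ with $\crcdf{x_{n}}{r} \nearrow M_{r}$. Separability of $X$ gives $M_{r} > 0$, so for all sufficiently large $n$ each $\upclosure_{r} x_{n}$ is non-empty, closed by \Cref{lem:upper_closure_is_closed_and_bounded}\ref{lem:upper_closure_is_closed_and_bounded_2}, and bounded with $\gamma(\upclosure_{r} x_{n}) \leq 2r$ by \Cref{lem:upper_closure_is_closed_and_bounded}\ref{lem:upper_closure_is_closed_and_bounded_4}; the only remaining task in each part is to upgrade these properties to compactness in an appropriate topology. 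For \ref{item:r_greatest_HB}, the Heine--Borel property promotes closed and bounded to compact, so \Cref{lem:r_greatest_T_compact} applies with the metric topology. For \ref{item:r_greatest_vanishing_MNC}, Kuratowski's intersection theorem (\Cref{thm:intersection_theorem}\ref{item:intersection_theorem_Kuratowski}) applies directly to the decreasingly nested closed non-empty sets $\upclosure_{r} x_{n}$ with $\gamma(\upclosure_{r} x_{n}) \to 0$. For \ref{item:r_greatest_vanishing_diameter}, the bound $\gamma \leq \diam$ reduces matters to \ref{item:r_greatest_vanishing_MNC}, and Cantor's theorem (\Cref{thm:intersection_theorem}\ref{item:intersection_theorem_Cantor_2}) then identifies the intersection as a singleton.

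\textbf{Parts \ref{item:r_greatest_doubling} and \ref{item:r_greatest_lower_bound}.} Both reduce to \ref{item:r_greatest_vanishing_MNC} via a uniform small-radius mass bound on $\upclosure_{r} x_{n}$. In \ref{item:r_greatest_doubling}, iterating the doubling inequality yields, for every $y \in \upclosure_{r} x_{n}$ and every $k \in \Naturals$, the estimate $\crcdf{y}{r/2^{k}} \geq \crcdf{y}{r}/C^{k} \geq \crcdf{x_{n}}{r}/C^{k} > 0$. Applying \Cref{lem:upper_closure_is_closed_and_bounded}\ref{lem:upper_closure_is_closed_and_bounded_3} at radius $r/2^{k}$ with threshold $\crcdf{x_{n}}{r}/C^{k}$ then shows $\gamma(\upclosure_{r} x_{n}) \leq r/2^{k-1}$ for every $k$, so $\gamma(\upclosure_{r} x_{n}) = 0$. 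In \ref{item:r_greatest_lower_bound}, boundedness places $\upclosure_{r} x_{n}$ inside some $\cball{o}{R}$, so the hypothesis gives $\crcdf{y}{\delta} \geq f(\delta, R) > 0$ for every $y \in \upclosure_{r} x_{n}$ and every $\delta > 0$; \Cref{lem:upper_closure_is_closed_and_bounded}\ref{lem:upper_closure_is_closed_and_bounded_3} then forces $\gamma(\upclosure_{r} x_{n}) \leq 2\delta$ for all $\delta > 0$, hence $\gamma = 0$.

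\textbf{Parts \ref{item:r_greatest_weakly_compact}, \ref{item:r_greatest_convex}, and the main obstacle.} For \ref{item:r_greatest_weakly_compact}, the weak topology on a Banach space is Hausdorff, so weakly compact sets are weakly closed, and \Cref{lem:r_greatest_T_compact} applies with $\mathcal{T}$ the weak topology. For \ref{item:r_greatest_convex}, Mazur's theorem promotes norm-closed and convex to weakly closed, and Kakutani's theorem in the reflexive setting then furnishes weak compactness of each bounded $\upclosure_{r} x_{n}$, reducing to \ref{item:r_greatest_weakly_compact}; convexity of $\mathfrak{M}_{r}$ is inherited from the $\upclosure_{r} x_{n}$ since intersections of convex sets are convex. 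The least routine step is the doubling argument in \ref{item:r_greatest_doubling}: one has to iterate the doubling inequality to bound ball mass at arbitrarily small radii and then combine this with the $\gamma$-scaling of \Cref{lem:upper_closure_is_closed_and_bounded}\ref{lem:upper_closure_is_closed_and_bounded_3} at matching radius; everything else is either a direct application of an intersection theorem or a standard functional-analytic invocation.
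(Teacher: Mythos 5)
Your proof is correct, and for parts~\ref{item:r_greatest_HB}, \ref{item:r_greatest_vanishing_MNC}, \ref{item:r_greatest_vanishing_diameter}, \ref{item:r_greatest_weakly_compact}, and~\ref{item:r_greatest_convex} it follows essentially the same route as the paper. The interesting divergence is in parts~\ref{item:r_greatest_doubling} and~\ref{item:r_greatest_lower_bound}. The paper handles both by upgrading the hypotheses to the Heine--Borel property for the \emph{entire} space $X$ and then reducing to part~\ref{item:r_greatest_HB}: for~\ref{item:r_greatest_doubling} this is done by citing an external result (Bj\"orn--Bj\"orn, Proposition~3.1) that a complete metric space carrying a doubling measure is proper, and for~\ref{item:r_greatest_lower_bound} by a pigeonhole argument showing every closed ball $\cball{o}{R}$ has $\gamma(\cball{o}{R}) = 0$. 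You instead show $\gamma(\upclosure_{r} x_{n}) = 0$ directly for the relevant upward closures and reduce both parts to~\ref{item:r_greatest_vanishing_MNC}. For~\ref{item:r_greatest_doubling} your iteration $\crcdf{y}{r/2^{k}} \geq \crcdf{y}{r}/C^{k} \geq \crcdf{x_{n}}{r}/C^{k}$ combined with \Cref{lem:upper_closure_is_closed_and_bounded}\ref{lem:upper_closure_is_closed_and_bounded_3} at radius $r/2^{k}$ and monotonicity of $\gamma$ under inclusion is a clean, self-contained argument that avoids the external citation; it is essentially the core of the standard properness proof, but specialised to just the sets that matter. For~\ref{item:r_greatest_lower_bound} your argument is likewise slightly leaner than the paper's, which bothers to conclude global Heine--Borel. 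The trade-off is that the paper's route establishes a stronger conclusion about the ambient space as a by-product, while yours proves only what is needed. Your part~\ref{item:r_greatest_vanishing_diameter} detours through~\ref{item:r_greatest_vanishing_MNC} via $\gamma \leq \diam$ before applying Cantor where the paper invokes Cantor directly --- a harmless redundancy. Finally, you explicitly note the standard fact that $\mathfrak{M}_{r}$ inherits convexity as an intersection of convex sets, which the paper's statement of~\ref{item:r_greatest_convex} asserts without comment.
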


\begin{proof}
	\begin{enumerate}[label=(\alph*)]
		\item
		\Cref{lem:upper_closure_is_closed_and_bounded} ensures that every upward closure $\upclosure_{r} x$, $x \in X$, is closed and bounded in the metric topology on $X$.
		The claim now follows from \Cref{lem:r_greatest_T_compact}.

		\item
		By \citet[Proposition~3.1]{BjoernBjoern2011}, any complete metric space with a doubling measure has the Heine--Borel property.
		The claim now follows from \ref{item:r_greatest_HB}.

		\item
		Let $R > 0$ and $\delta > 0$ be arbitrary.
		The lower bound \eqref{eq:r_greatest_lower_bound} implies that there cannot be an infinite set of pairwise-disjoint balls $\cball{x_{n}}{\delta}$, $n \in \Naturals$, with centres $x_{n} \in \cball{o}{R}$ since, if there were, then we would obtain the contradiction
		\begin{equation*}
			1 \geq \crcdf{o}{R + \delta} \geq
			\mu \left( \biguplus_{n \in \Naturals} \cball{x_{n}}{\delta} \right) =
			\sum_{n \in \Naturals} \crcdf{x_{n}}{\delta} \geq
			\sum_{n \in \Naturals} f(\delta, R) = \infty .
		\end{equation*}
		Since $\delta > 0$ was arbitrary, $\gamma(\cball{o}{R}) = 0$, i.e.\ $\cball{o}{R}$ is compact.
		Now, given any closed and bounded set $A \subseteq X$, choose $R > 0$ large enough that $A \subseteq \cball{o}{R}$ to see that $A$ must be compact.
		Therefore, $X$ has the Heine--Borel property.
		The claim now follows from \ref{item:r_greatest_HB}.

		\item
		The claim follows from Kuratowski's intersection theorem (\Cref{thm:intersection_theorem}\ref{item:intersection_theorem_Kuratowski}).

		\item
		As already observed, by \Cref{lem:upper_closure_is_closed_and_bounded}, each upward closure $\upclosure_{r} x_{n}$ is both closed and bounded in the metric topology and they are decreasingly nested.
		Since $X$ is complete, Cantor's intersection theorem (\Cref{thm:intersection_theorem}\ref{item:intersection_theorem_Cantor_2}) yields that $\mathfrak{M}_{r} = \bigcap_{n \in \Naturals} \upclosure_{r} x_{n} = \{ x_{r}^{\star} \}$ for some $x_{r}^{\star} \in X$.

		\item
		This is simply \Cref{lem:r_greatest_T_compact} in the special case that $\mathcal{T}$ is the weak topology of the separable Banach space $X$.

		\item
		Each closed, bounded, and convex subset of the separable, reflexive Banach space $X$ is necessarily weakly compact, and so the claim follows from \ref{item:r_greatest_weakly_compact}.
		\qedhere
	\end{enumerate}
\end{proof}

\Cref{thm:r_greatest} is by no means universally applicable, and indeed there are measures that have no radius-$r$ modes, as the next two examples show.

\begin{example}[An atomic measure with no radius-$r$ mode for $1 \leq r < 2$]
	\label{eg:no_radius_1_mode}
	Let $X = \Naturals$ be equipped with the following variant of the discrete metric:
	\begin{equation}
		\label{eq:funny_discrete}
		\dd(k, \ell) \defeq
		\begin{cases}
			0 , & \text{if $k = \ell$,} \\
			2 , & \text{if $\min \{ k, \ell \}$ is odd and $\max \{ k, \ell \} = \min \{ k, \ell \} + 1$,} \\
			1 , & \text{otherwise.}
		\end{cases}
	\end{equation}
	In the space $(X, \dd)$, distinct points are a unit distance apart, with the exception of each odd number and its successor, which are doubly spaced.
	Equip this space with the measure $\mu \defeq \sum_{k \in \Naturals} 2^{-k} \delta_{k} \in \prob{X}$, where $\delta_{k}$ is the unit Dirac measure centred at $k$.
	For arbitrary $k \in \Naturals$,
	\begin{align}
		\label{eg:no_radius_1_mode_odd}
		\crcdf{2 k - 1}{1} & = \mu(X \setminus \{ 2 k \}) = 1 - 2^{- 2 k} , \\
		\label{eg:no_radius_1_mode_even}
		\crcdf{2 k}{1} & = \mu(X \setminus \{ 2 k - 1 \}) = 1 - 2^{- (2 k - 1)} .
	\end{align}
	Both \eqref{eg:no_radius_1_mode_odd} and \eqref{eg:no_radius_1_mode_even} show that $M_{1} = 1$;
	\eqref{eg:no_radius_1_mode_odd} shows that no odd number is a radius-$1$ mode;
	\eqref{eg:no_radius_1_mode_even} shows that no even number is a radius-$1$ mode.
	Thus, $\mu$ has no radius-$1$ mode at all.

	Similar arguments also show that $\mu$ has no radius-$r$ mode for $1 \leq r < 2$;
	for $r \geq 2$, every point of $X$ is a radius-$r$ mode;
	for $0 < r < 1$, the point $1 \in X$ is the unique radius-$r$ mode.

	It is interesting to relate this example to \Cref{thm:r_greatest}.
	In this setting, for each $k \in X$, $\upclosure_{1} k \supseteq \{ k, k + 2, k + 4, \dots \}$.
	This set is non-compact with $\gamma(\upclosure_{1} k) \geq 1$, since it contains an infinite $1$-separated sequence.
	Thus, neither \Cref{thm:r_greatest}\ref{item:r_greatest_HB} nor \ref{item:r_greatest_vanishing_MNC} can apply.
	Also, although the space $(X, \dd)$ is complete,\footnote{Just as in the case of the discrete metric, in this space, the properties of being a Cauchy sequence, being a convergent sequence, and being eventually constant all coincide.} \Cref{thm:r_greatest}\ref{item:r_greatest_vanishing_diameter} does not apply because $\diam(\upclosure_{1} k) \geq 1$.
\end{example}

\begin{example}[A non-atomic measure with no radius-$r$ mode for any $0 < r < \nicefrac{1}{8}$.]
	\label{eg:no_radius_r_mode}
	Building on the ideas of \Cref{eg:no_radius_1_mode}, consider the space
	\begin{align}
		\label{eq:no_radius_r_mode_X}
		X & \defeq \Set{ (\xi, k, m) \in \Reals \times \Naturals^{2} }{ \absval{ \xi } \leq 2^{- k - m - 1} } ,
	\end{align}
	equipped with the metric $d$ and probability measure $\mu$ given by
	\begin{align}
		\label{eq:no_radius_r_mode_d}
		d \bigl( (\xi, k, m) , (\eta, \ell, n) \bigr) & \defeq
		\begin{cases}
			2 , & \text{if $m \neq n$,} \\
			2^{-m} \dd (k, \ell) , & \text{if $m = n$ and $k \neq \ell$,} \\
			\absval{ \xi - \eta } , & \text{if $m = n$ and $k = \ell$,}
		\end{cases} \\
		\label{eq:no_radius_r_mode_mu}
		\mu \left( \biguplus_{k, m \in \Naturals} \bigl( E_{k, m} \times \{ k \} \times \{ m \} \bigr) \right) & \defeq \frac{1}{Z} \sum_{k, m \in \Naturals} \sigma^{-m} \lambda^{1} \bigl( E_{k, m} \cap [-2^{- k - m - 1}, 2^{- k - m - 1}] \bigr),
	\end{align}
	for $E_{k, m} \in \Borel{\Reals}$, where $\lambda^{1}$ is one-dimensional Lebesgue measure, $\nicefrac{1}{2} < \sigma < 1$ is a scaling parameter, and the normalisation constant is $Z \defeq \sum_{m \in \Naturals} (2 \sigma)^{-m} \in (1, \infty)$.

	Now let $0 < r < \nicefrac{1}{8}$ be arbitrary and let $n \in \Naturals$ be uniquely determined by $2^{-n} \leq r < 2^{-n + 1}$.
	We now determine $M_{r}$ and whether or not it can be attained by the masses of balls $\cball{x}{r}$, where $x = (\xi, k, m)$ has $m = n$, $m < n$, or $m > n$ respectively.
	Note that, since $r < 2$, the first case of \eqref{eq:no_radius_r_mode_d} implies that $\cball{x}{r} \subseteq \Reals \times \Naturals \times \{ m \}$.
	\begin{enumerate}[label=(\roman*)]
		\item
		First suppose that $m = n$.
		For odd $k \in \Naturals$,
		\begin{equation*}
			\mu(\cball{x}{r}) = \mu(\Reals \times (\Naturals \setminus \{ k + 1 \}) \times \{ m \}) = \frac{(2\sigma)^{-m}}{Z} (1 - 2^{-(k + 1)}) .
		\end{equation*}
		Taking the limit as $k \to \infty$ shows that $M_{r} \geq \frac{(2\sigma)^{-n}}{Z}$ but that no such ball realises this supremal mass.
		The case of even $k$ is similar, just as in \Cref{eg:no_radius_1_mode}.

		\item
		For $m > n$, since $\cball{x}{r} \subseteq \Reals \times \Naturals \times \{ m \}$, it follows that $x$ is not a radius-$r$ mode because
		\begin{equation*}
				\mu(\cball{x}{r}) \leq \frac{(2 \sigma)^{-m}}{Z} < \frac{(2\sigma)^{-n}}{Z} \leq M_r.
		\end{equation*}

		\item
		If $m < n$, then $r < 2^{-n} < 2^{-m}$, and so the second case of \eqref{eq:no_radius_r_mode_d} ensures that $\cball{x}{r} \subseteq \Reals \times \{ k \} \times \{ m \}$.
		The mass of such a ball is maximised by the case $\xi = 0$, $k = 1$, $m = n - 1$, in which case the ball (which is a single line segment) has mass
		\begin{align*}
			\crcdf{x}{r}
			& = \frac{\sigma^{-m}}{Z} \lambda^{1} \bigl( [-r, r] \cap [ -2^{-k - m - 1}, 2^{-k - m - 1} ] \bigr) \\
			& \leq \sigma \frac{\sigma^{-n}}{Z} \lambda^{1} \bigl( [-2^{-n}, 2^{-n}] \cap [ -2^{-n - 1}, 2^{-n - 1} ] \bigr) \\
			& = \sigma \frac{(2\sigma)^{-n}}{Z}
			< \frac{(2\sigma)^{-n}}{Z},
		\end{align*}
		where the last inequality follows from the fact that $\sigma < 1$.
	\end{enumerate}
	Hence, $M_{r} = \frac{(2 \sigma)^{-n}}{Z}$ but $\crcdf{x}{r} < M_{r}$ for all $x \in X$, i.e.\ $\mu$ has no radius-$r$ mode.
\end{example}

Thus, while \Cref{thm:r_greatest} on the existence of radius-$r$ modes covers a variety of well-behaved spaces and measures, \Cref{eg:no_radius_1_mode,eg:no_radius_r_mode} show that existence cannot be guaranteed for general spaces and measures.

Before moving on, we mention one interesting intermediate case, motivated by applications to inverse problems, namely countable product measures on weighted $\ell^{p}$ spaces (and isometric linear images of such spaces).
This is a broad class that includes Gaussian, Besov \citep{DashtiHarrisStuart2012,LassasSaksmanSiltanen2009}, and Cauchy measures \citep{Sullivan2017}.
It turns out that reweightings of such measures always have radius-$r$ modes, i.e.\ Bayesian posteriors with such measures as priors always have radius-$r$ MAP estimators.
We defer the precise statements to \Cref{thm:lp_alpha_radius_r_modes,cor:Gaussian_Besov_Cauchy_radius_r_modes} in \Cref{sec:radius_r_modes_sequence_spaces} because they do not have a particularly order-theoretic flavour.

Also, while we do prove the existence of radius-$r$ modes, we do not consider taking limits as $r \to 0$ to obtain true MAP estimators for such posteriors.
The main difficulty here lies in proving that such a family $(x_{r}^{\star})_{r > 0}$ is bounded, so that a weakly convergent subsequence can be extracted;
this is not true in general, so one must argue using properties of the prior and likelihood (e.g.\ when the prior is Gaussian or Besov).

Indeed, the whole question of taking limits of radius-$r$ modes is a sensitive one, and is the topic of the next section.

\subsection{Convergence of greatest and near-greatest elements}
\label{subsection:convergence_of_radius_r}

If radius-$r$ modes $x_{r}^{\star}$ do exist for each $r > 0$, it is then natural to ask whether sequences of radius-$r$ modes can approximate true modes, e.g.\ strong or weak modes.
This approach is used by \citet[Theorem~3.5]{DashtiLawStuartVoss2013} to obtain strong modes for Bayesian posteriors arising from Gaussian priors.

However, we have seen that existence of radius-$r$ modes can be difficult to prove, and in some cases no radius-$r$ modes exist (\Cref{eg:no_radius_1_mode,eg:no_radius_r_mode}).
Taking limits of radius-$r$ modes is also problematic for more general measures: the limit need not be a strong or weak mode, and not every mode can be represented as the limit of radius-$r$ modes.
Thus, one cannot hope to use the approach of taking limits of radius-$r$ modes to find true modes if there is no correspondence between modes and limits of radius-$r$ modes.
Nevertheless, we show that some of the difficulties can be overcome using \defterm{asymptotic maximising families} (AMFs) as proposed by \citet{KlebanovWacker2022}.

To illustrate the problem and motivate the introduction of AMFs, we first give an example of a measure with a bounded and continuous Lebesgue density possessing a mode that cannot be represented as the limit of radius-$r$ modes. 
The problem here is that balls around the points ${\pm 1}$ have asymptotically equivalent mass, but each ball around ${+1}$ has slightly more mass than the corresponding ball around ${-1}$; as a result, ${+1}$ ``hides'' the other mode ${-1}$.

\begin{figure}[t]
	\centering
	\begin{subfigure}{0.49\textwidth}
		\centering
		\includegraphics[width=\textwidth]{./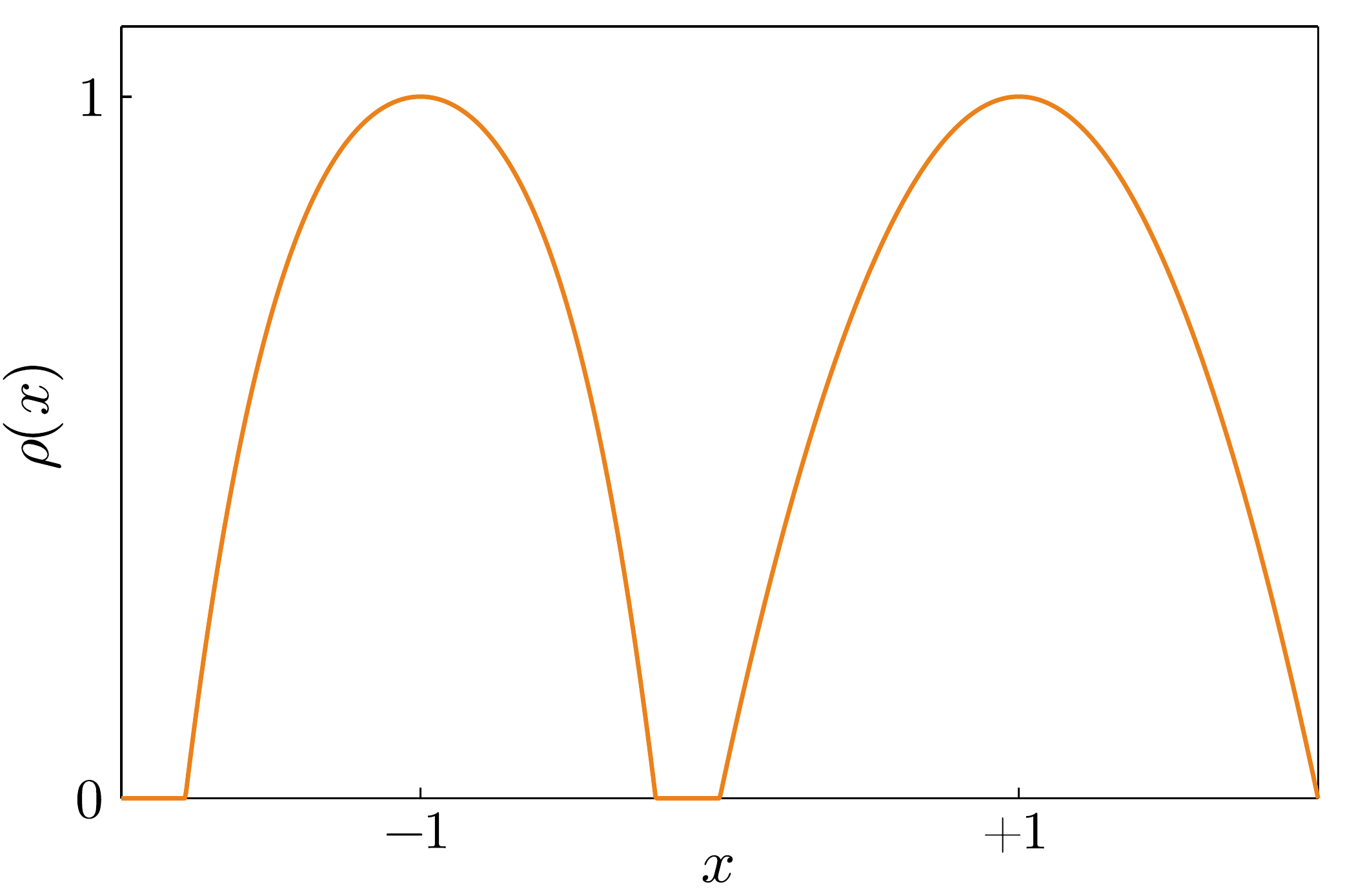}
		\subcaption{\raggedright Unnormalised density \eqref{eq:modes_limit_of_r-modes_density} of the measure in \Cref{eg:modes_limit_of_r-modes}.
		The point $+1$ is the unique radius-$r$ mode for all sufficiently small $r$.}
	\end{subfigure}
	\begin{subfigure}{0.49\textwidth}
		\centering
		\includegraphics[width=\textwidth]{./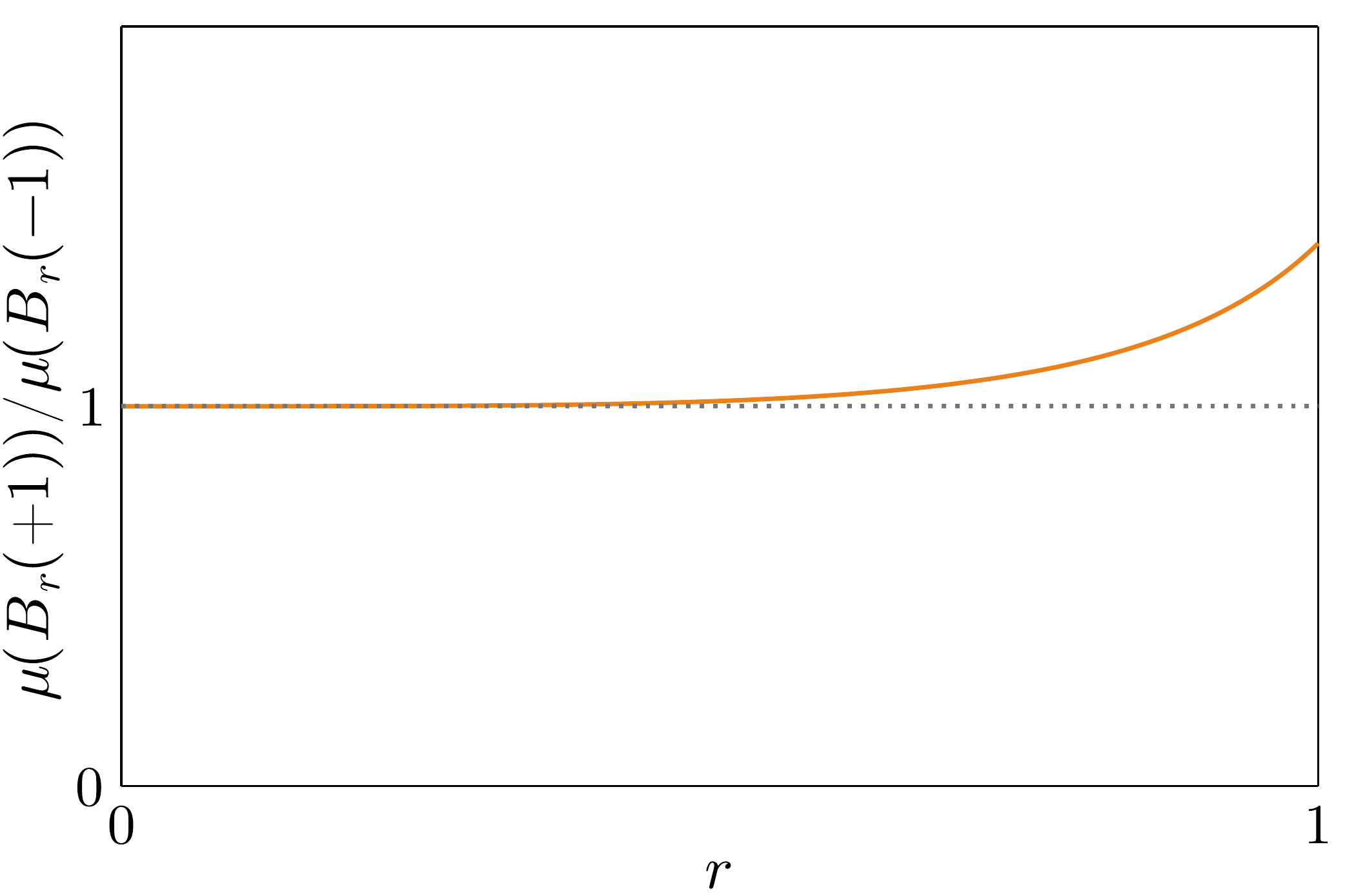}
		\subcaption{\raggedright The ratio $\nicefrac{\crcdf{+1}{r}}{\crcdf{-1}{r}}$
		converges to $1$ as $r \to 0$, but it is strictly greater than $1$ for any $r > 0$.}
	\end{subfigure}
	\caption{Not every strong mode is the limit of a sequence of radius-$r$ modes: in \Cref{eg:modes_limit_of_r-modes}, $-1$ is a strong mode but $+1$ is the unique radius-$r$ mode for all small $r$.}
	\label{fig:modes_limit_of_r-modes}
\end{figure}

\begin{example}
	\label{eg:modes_limit_of_r-modes}
	Define $\mu \in \prob{\Reals}$ by the Lebesgue density as shown in \Cref{fig:modes_limit_of_r-modes}, given by
	\begin{equation}
		\label{eq:modes_limit_of_r-modes_density}
		\rho(x) \propto \max \{ 1 - (x - 1)^2, 0 \} + \max \{ 1 - (x + 1)^2 - (x + 1)^4, 0 \}.
	\end{equation}
	When $r$ is sufficiently small, $\crcdf{+1}{r} = 2r - \frac{2}{3} r^3$ and
	$\crcdf{-1}{r} = 2r - \frac{2}{3} r^3 - \frac{2}{5} r^5$, so there is a unique
	radius-$r$ mode at $+1$.
	However, $+1$ and $-1$ are both strong modes: $+1$ is a radius-$r$ mode for all sufficiently small $r$, so it is a strong mode (\Cref{thm:limits_of_r-modes}) and $-1$ is a strong mode because
	\begin{equation*}
		\lim_{r \to 0} \frac{\crcdf{-1}{r}}{M_r} = \lim_{r \to 0} \frac{\crcdf{+1}{r}}{M_r}
		\lim_{r \to 0} \frac{\crcdf{-1}{r}}{\crcdf{+1}{r}} = 1.
	\end{equation*}
\end{example}

Instead of representing modes as limits of radius-$r$ modes --- which might not be possible --- one may consider families of points that are nearly greatest, which always exist, and try to take limits of such families.

\begin{definition}
	\label{defn:AMF}
	Let $X$ be a metric space and let $\mu \in \prob{X}$. 
	A net $(x_r)_{r > 0} \subseteq X$ is an \defterm{asymptotic maximising family} (AMF) if there exists a positive function $\varepsilon$ with $\lim_{r \to 0} \varepsilon(r) = 0$ and
	\begin{equation}
		\label{eq:AMF}
		\frac{\crcdf{x_r}{r}}{M_r} \geq 1 - \varepsilon(r)
		\quad
		\text{for all $r > 0$.}
	\end{equation}
\end{definition}

Note that every measure admits an AMF satisfying \eqref{eq:AMF}, even if the function $\varepsilon$ is specified in advance, which is sometimes advantageous.

The following results shed light on the subtleties involved in taking limits of radius-$r$ modes, or, more generally, AMFs.

\begin{theorem}
	\label{thm:limits_of_r-modes}
	Let $X$ be a separable metric space and let $\mu \in \prob{X}$.
	\begin{enumerate}[label=(\alph*)]
		\item
		\label{item:limits_of_r-modes_strong}
		If $(x^{\star})_{r > 0}$ is an AMF with $x^{\star} \in X$ fixed, then $x^{\star}$ is a strong mode.
		
		\item
		If $x^{\star}$ is a radius-$r$ mode for all small enough $r > 0$, then $x^{\star}$ is a strong mode.

		\item
		\label{item:limits_of_r-modes_gen_strong}
		If the AMF $(x_{r}^{\star})_{r > 0}$ converges to $x^{\star}$, then $x^{\star}$ is a generalised strong mode.

	\end{enumerate}
\end{theorem}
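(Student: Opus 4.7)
The plan is to derive each of the three parts directly from the relevant definitions, treating \ref{item:limits_of_r-modes_strong} as the core computation, the middle assertion as an immediate corollary via \Cref{lem:radius_r_mode}, and \ref{item:limits_of_r-modes_gen_strong} as a subsequence version of the same sandwich argument.

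For \ref{item:limits_of_r-modes_strong}, the constant net $x_{r} = x^{\star}$ is, by hypothesis, an AMF, so for every $r > 0$
\[
	1 \geq \frac{\crcdf{x^{\star}}{r}}{M_{r}} \geq 1 - \varepsilon(r),
\]
where the upper bound comes from the definition of $M_{r}$ and the lower bound is the AMF inequality \eqref{eq:AMF}. Since $\varepsilon(r) \to 0$ as $r \to 0$, the ratio converges to $1$, which is exactly the definition \eqref{eq:strong_mode} of a strong mode.

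For the middle assertion, if $x^{\star}$ is a radius-$r$ mode for all $r$ below some threshold $r_{0}$, then \Cref{lem:radius_r_mode}\ref{item:radius_r_mode_Mr} gives $\crcdf{x^{\star}}{r} = M_{r}$, so that $(x^{\star})_{r > 0}$ satisfies \eqref{eq:AMF} with $\varepsilon(r) = 0$ for $r < r_{0}$ (and with any convenient choice, say $\varepsilon(r) = 1$, for larger $r$). The claim then reduces to part \ref{item:limits_of_r-modes_strong}.

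For \ref{item:limits_of_r-modes_gen_strong}, let $(r_{n})_{n \in \Naturals}$ be an arbitrary positive null sequence and set $x_{n} \defeq x_{r_{n}}^{\star}$. Convergence of the net $(x_{r}^{\star})_{r > 0}$ to $x^{\star}$ as $r \to 0$ means that for each open $U \ni x^{\star}$ there is $\delta > 0$ with $x_{r}^{\star} \in U$ for all $0 < r < \delta$; applying this along $(r_{n})$ yields $x_{n} \to x^{\star}$. The AMF inequality evaluated at $r = r_{n}$ gives
\[
	1 \geq \frac{\crcdf{x_{n}}{r_{n}}}{M_{r_{n}}} \geq 1 - \varepsilon(r_{n}),
\]
and since $r_{n} \to 0$ forces $\varepsilon(r_{n}) \to 0$, the ratio tends to $1$, which matches \eqref{eq:generalised_strong_mode} verbatim. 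Thus $x^{\star}$ is a generalised strong mode.

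There is no serious obstacle here: the only substantive check is to make precise the notion of convergence of the net $(x_{r}^{\star})_{r > 0}$ as $r \to 0$ (so that it implies sequential convergence along every positive null sequence $(r_{n})$), after which each claim reduces to a one-line squeeze between $1 - \varepsilon(r)$ and $1$. The three parts together formalise the slogan that AMFs asymptotically realise $M_{r}$, which in the presence of a fixed or convergent anchor point is enough to promote them to (generalised) strong modes.
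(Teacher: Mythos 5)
Your proof is correct and matches the paper's argument: parts (a) and (b) use exactly the same one-line squeeze between $1 - \varepsilon(r)$ and $1$, with (b) reduced to (a) by observing that the constant family is an AMF. For (c) the paper simply cites \citet[Lemma~2.4]{Clason2019GeneralizedModes}, whereas you unpack the short argument directly (take $x_{n} \defeq x_{r_{n}}^{\star}$ along any positive null sequence, note the net convergence gives $x_{n} \to x^{\star}$, and squeeze the ratio); this is precisely what that cited lemma does, so the substance is identical.
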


\begin{proof}
	\begin{enumerate}[label=(\alph*)]
		\item 
		As $\crcdf{x^{\star}}{r} \geq (1 - \varepsilon(r)) M_r$, it is immediate that $x^{\star}$ is a strong mode, because
		\begin{equation*}
			1\geq \lim_{r \to 0} \frac{\crcdf{x^{\star}}{r}}{M_r} \geq \lim_{r \to 0} ( 1 - \varepsilon(r) ) = 1 .
		\end{equation*}

		\item 
		This is immediate from \ref{item:limits_of_r-modes_strong} as $(x^{\star})_{r > 0}$ forms an AMF.

		\item 
		This is precisely \citet[Lemma~2.4]{Clason2019GeneralizedModes}.
		\qedhere

	\end{enumerate}
\end{proof}

The claim in \ref{item:limits_of_r-modes_gen_strong} --- which requires that the net $(x_{r}^{\star})_{r > 0}$ converges to $x^{\star}$ along every subsequence --- cannot be made stronger without additional hypotheses:
the limit $x^{\star}$ need not be a strong or weak mode (as can be seen in \Cref{eg:upward_closures_wrt_preceq_0}\ref{item:upward_closures_wrt_preceq_0_1}, for which $x^\star = 1$ is the limit of an AMF which is neither a strong mode nor a weak mode).
Furthermore, one cannot weaken the hypotheses of \ref{item:limits_of_r-modes_gen_strong} further:
the points $\pm 1$ in \Cref{thm:oscillation_example} are limit points of an AMF but they are not even generalised modes. 

The general question of classifying measures $\mu$ for which limits of radius-$r$ modes are strong modes is still open, although \citet{DashtiLawStuartVoss2013} show that reweightings of Gaussian measures on Hilbert spaces enjoy this property, and \cite{KlebanovWacker2022} show the same for some Gaussian measures on sequence spaces.

Under an additional nesting assumption, intersection arguments can be applied to AMFs to yield the existence of several kinds of modes.

\begin{theorem}[AMFs and strong modes]
	\label{thm:AMFs_and_strong_modes}
	Let $X$ be a complete and separable metric space and let $\mu \in \prob{X}$.
	Let $(x_{r})_{r > 0}$ be any AMF, i.e.\ any net satisfying \eqref{eq:AMF}, and let $I \defeq \bigcap_{r > 0} \upclosure_{r} x_{r}$.
	Then
	\begin{enumerate}[label=(\alph*)]
		\item
		\label{thm:AMFs_and_strong_modes_supp}
		$I \subseteq \supp(\mu)$;
		
		\item
		\label{thm:AMFs_and_strong_modes_strong_mode}
		every $x^{\star} \in I$ is a strong (and hence weak and generalised strong) mode for $\mu$;
		
		\item
		\label{thm:AMFs_and_strong_modes_nonemptiness}
		and if also
		\begin{equation}
			\label{eq:nesting_for_GWM}
			0 < r \leq s \implies \upclosure_{r} x_{r} \subseteq \upclosure_{s} x_{s},
		\end{equation}
		then $I$ is non-empty and compact.
	\end{enumerate}
\end{theorem}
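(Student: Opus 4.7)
The plan is to handle the three claims in order, since (a) feeds into (b), and (c) is somewhat independent but relies on the same order-theoretic machinery developed in \Cref{sec:positive-radius_preorder}.

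For part \ref{thm:AMFs_and_strong_modes_supp}, I would observe that for any $x^{\star} \in I$ and any $r > 0$, the definition of $I$ gives $x^{\star} \succeq_{r} x_{r}$, i.e.\ $\crcdf{x^{\star}}{r} \geq \crcdf{x_{r}}{r}$; combining this with the AMF bound $\crcdf{x_{r}}{r} \geq (1 - \varepsilon(r)) M_{r}$ yields $\crcdf{x^{\star}}{r} \geq (1 - \varepsilon(r)) M_{r}$. Since $X$ is separable, $M_{r} > 0$ (by the same separability argument cited after \eqref{eq:M_r}), and since $\varepsilon(r) \to 0$, the right-hand side is strictly positive for all sufficiently small $r$. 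Hence $\crcdf{x^{\star}}{r} > 0$ for all small $r$, so by monotonicity of $r \mapsto \crcdf{x^\star}{r}$ this holds for all $r > 0$, placing $x^{\star}$ in $\supp(\mu)$ via \eqref{eq:supp_mu}.

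For part \ref{thm:AMFs_and_strong_modes_strong_mode}, I would use the same chain of inequalities together with the trivial bound $\crcdf{x^{\star}}{r} \leq M_{r}$ to sandwich the ratio:
\begin{equation*}
	1 - \varepsilon(r) \leq \frac{\crcdf{x^{\star}}{r}}{M_{r}} \leq 1 .
\end{equation*}
Letting $r \to 0$ gives $\lim_{r \to 0} \crcdf{x^{\star}}{r} / M_{r} = 1$, which is precisely the strong mode condition \eqref{eq:strong_mode}. The parenthetical remark about weak and generalised strong modes is then immediate from the standard hierarchy among these notions (as noted in \Cref{sec:related}).

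Part \ref{thm:AMFs_and_strong_modes_nonemptiness} is the main obstacle, because $I$ is defined as an uncountable intersection and the direct intersection theorems in \Cref{thm:intersection_theorem} are stated for sequences. The first step is to reduce to a countable intersection: fix any null sequence $r_{n} \searrow 0$, and claim that $I = \bigcap_{n \in \Naturals} \upclosure_{r_{n}} x_{r_{n}}$. The inclusion $\subseteq$ is trivial; for $\supseteq$, given any $s > 0$, pick $n$ large enough that $r_{n} \leq s$, then apply the nesting hypothesis \eqref{eq:nesting_for_GWM} to get $\upclosure_{r_{n}} x_{r_{n}} \subseteq \upclosure_{s} x_{s}$, which shows the countable intersection is contained in $\upclosure_{s} x_{s}$ for every $s$. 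The second step is to verify the hypotheses of Kuratowski's theorem (\Cref{thm:intersection_theorem}\ref{item:intersection_theorem_Kuratowski}) for the sequence $A_{n} \defeq \upclosure_{r_{n}} x_{r_{n}}$: each $A_{n}$ is non-empty (contains $x_{r_n}$) and closed by \Cref{lem:upper_closure_is_closed_and_bounded}\ref{lem:upper_closure_is_closed_and_bounded_2}; the sequence is decreasingly nested by \eqref{eq:nesting_for_GWM} applied with $r = r_{n+1}$ and $s = r_{n}$; and from \Cref{lem:upper_closure_is_closed_and_bounded}\ref{lem:upper_closure_is_closed_and_bounded_4} we get $\gamma(A_{n}) \leq 2 r_{n} \to 0$, provided that $\crcdf{x_{r_n}}{r_n} > 0$, which holds for all large $n$ by the AMF bound together with $M_{r_n} > 0$. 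Applying Kuratowski's theorem then yields that $I$ is non-empty and compact.
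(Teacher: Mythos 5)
Your proof is correct and follows essentially the same strategy as the paper's: the chain of inequalities $\crcdf{x^{\star}}{r} \geq \crcdf{x_{r}}{r} \geq (1 - \varepsilon(r)) M_{r}$ for parts \ref{thm:AMFs_and_strong_modes_supp} and \ref{thm:AMFs_and_strong_modes_strong_mode}, and a reduction to a countable intersection followed by Kuratowski's theorem for part \ref{thm:AMFs_and_strong_modes_nonemptiness}. You spell out a few points the paper leaves implicit, namely the monotonicity argument extending $\crcdf{x^{\star}}{r} > 0$ from small $r$ to all $r$, the two-sided verification of $I = \bigcap_{n} \upclosure_{r_{n}} x_{r_{n}}$, and the fact that the bound $\gamma(\upclosure_{r_{n}} x_{r_{n}}) \leq 2 r_{n}$ from \Cref{lem:upper_closure_is_closed_and_bounded}\ref{lem:upper_closure_is_closed_and_bounded_4} requires $\crcdf{x_{r_{n}}}{r_{n}} > 0$, which you correctly obtain from the AMF bound for large $n$.
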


\begin{proof}
	Let $x^{\star} \in I$.
	For all sufficiently small $r > 0$, it follows that $\crcdf{x^{\star}}{r} \geq \crcdf{x_{r}}{r} \geq M_{r} ( 1 - \varepsilon(r) ) > 0$, and so $x^{\star} \in \supp(\mu)$, which establishes \ref{thm:AMFs_and_strong_modes_supp}.
	Furthermore, since $x^{\star} \in \upclosure_{r} x_{r}$ for each $r$,
	\[
		1 \geq \frac{\crcdf{x^{\star}}{r}}{M_{r}} \geq \frac{\crcdf{x_{r}}{r}}{M_{r}} \geq 1 - \varepsilon(r).
	\]
	Taking the limit as $r \to 0$ throughout shows that $x^{\star}$ is a strong mode (and hence also a weak and generalised strong mode) for $\mu$, establishing \ref{thm:AMFs_and_strong_modes_strong_mode}.
	(Alternatively, one may observe that $(x^{\star})_{r > 0}$ is a constant AMF and appeal to  \Cref{thm:limits_of_r-modes}\ref{item:limits_of_r-modes_strong}.)

	For \ref{thm:AMFs_and_strong_modes_nonemptiness}, let $(r_{n})_{n \in \Naturals}$ be some null sequence of radii.
	The nesting hypothesis \eqref{eq:nesting_for_GWM} implies that $I = \bigcap_{n \in \Naturals} \upclosure_{r_{n}} x_{r_{n}}$.
	For each $n$, $\upclosure_{r_{n}} x_{r_{n}}$ is non-empty and, by \Cref{lem:upper_closure_is_closed_and_bounded}, is closed and bounded with $\gamma(\upclosure_{r_{n}} x_{r_{n}}) \leq 2 r_{n}$.
	This, together with the nesting hypothesis \eqref{eq:nesting_for_GWM} and Kuratowski's intersection theorem (\Cref{thm:intersection_theorem}\ref{item:intersection_theorem_Kuratowski}), ensures that $I$ is non-empty and compact.
\end{proof}

\begin{remark}
	\begin{enumerate}[label=(\alph*)]
		\item
		The nesting hypothesis \eqref{eq:nesting_for_GWM}, in conjunction with \Cref{lem:upper_closure_is_closed_and_bounded}, ensures that the AMF $(x_{r})_{r > 0}$ --- and indeed any family of greatest elements $(x_{r}^{\star})_{r > 0}$ --- must be bounded.
		This means that \Cref{thm:AMFs_and_strong_modes} does not apply to measures such as \Cref{eg:upward_closures_wrt_preceq_0}\ref{item:upward_closures_wrt_preceq_0_2}, for which the radius-$r$ modes ``escape to infinity'' as $r \to 0$.
		Hypothesis \eqref{eq:nesting_for_GWM} also fails for measures displaying oscillatory behaviour of the kind discussed in \Cref{thm:oscillation_example}.

		\item
		\Cref{thm:AMFs_and_strong_modes} is not sharp, in the sense that there can exist modes $x^{\star} \notin \bigcap_{r > 0} \upclosure_{r} x_{r}$.
		See \Cref{eg:modes_limit_of_r-modes} for an example of this situation with
		\begin{equation*}
			x_{r} \equiv {+1} ,
			\quad
			\upclosure_{r} x_{r} = \{ +1 \} ,
			\quad
			x^{\star} = {-1} \notin \bigcap_{r > 0} \upclosure_{r} x_{r} .
		\end{equation*}
	\end{enumerate}
\end{remark}

\section{Preorders in the small-radius limit}
\label{sec:limiting_preorders}

One would like to think of $x^{\star} \in X$ as a mode of $\mu \in \prob{X}$ if $x^{\star}$ is a greatest or maximal element of $X$ with respect to the preorder $\preceq_{r}$ ``in the limit as $r \to 0$'' in some sense.
However, is such a limiting preorder well defined?
Must this preorder have greatest or maximal elements?

In fact, there are several candidates for a small-radius limiting preorder and it appears that each of them has at least one undesirable feature.
This work will focus on the analytic small-radius limiting preorder $\preceq_{0}$, to be defined shortly (\Cref{defn:analytic_small-radius_preorder}).
This preorder has the advantage that its greatest elements are weak modes;
however, it has the disadvantage that it is not total, i.e.\ the existence of greatest elements is not guaranteed, and indeed the collection of incomparable elements may be rather large.
We claim that this is a small price to pay:
we show in \Cref{sec:alternative_small-radius_preorders} that the alternative definitions are even more ill behaved.

\subsection{Definition and basic properties}

\begin{definition}[Small-radius limiting preorder]
	\label{defn:analytic_small-radius_preorder}
	Let $X$ be a metric space and let $\mu \in \prob{X}$.
	Define a preorder $\preceq_{0}$ on $X$ by
	\begin{align}
		\label{eq:preceq_0}
		x \preceq_{0} x'
		& \iff \limsup_{r \to 0} \frac{\crcdf{x}{r}}{\crcdf{x'}{r}} \leq 1
		\iff \liminf_{r \to 0} \frac{\crcdf{x'}{r}}{\crcdf{x}{r}} \geq 1 ,
	\end{align}
	if both $x, x' \in \supp(\mu)$.
	Additionally, as exceptional cases, $x \preceq_{0} x'$ is defined to be false for $x \in \supp(\mu)$ and $x' \notin \supp(\mu)$, and $x \preceq_{0} x'$ is defined to be true for $x \notin \supp(\mu)$ and $x' \in X$.
\end{definition}

It is relatively straightforward to verify that $\preceq_{0}$, as defined above, is a preorder on $X$;
the only subtleties are correct handling of points outside the support, and the use of the upper bound (but not equality)
\begin{equation}
		\label{eq:limsup_product_rule}
		\limsup_{r \to 0} \frac{\crcdf{x}{r}}{\crcdf{y}{r}} \frac{\crcdf{y}{r}}{\crcdf{z}{r}}
		\leq
		\limsup_{r \to 0} \frac{\crcdf{x}{r}}{\crcdf{y}{r}} \limsup_{r \to 0} \frac{\crcdf{y}{r}}{\crcdf{z}{r}}
\end{equation}
when verifying transitivity.
As usual, we will write $x \asymp_{0} x'$ if both $x \preceq_{0} x'$ and $x \succeq_{0} x'$ hold, and $x \incomp_{0} x'$ if neither $x \preceq_{0} x'$ nor $x \succeq_{0} x'$ hold.

The appeal of the preorder $\preceq_{0}$ is that its greatest elements are exactly the weak modes of $\mu$, as defined in \eqref{eq:weak_mode}, as the next two results show.

\begin{lemma}[Properties of $\preceq_0$-maximal elements]
	\label{lem:properties_of_maximal}
	Let $X$ be separable and let $\mu \in \prob{X}$.
	\begin{enumerate}[label=(\alph*)]
			\item
			\label{item:0_maximal_implies_in_supp}
			If $x^{\star}$ is $\preceq_{0}$-maximal, then $x^{\star} \in \supp(\mu)$.
			\item
			\label{item:0_maximal_limits}
			The point $x^{\star} \in \supp(\mu)$ is $\preceq_{0}$-maximal if and only if any $x \in X$ satisfies either
			\begin{equation} \label{eq:liminf_of_small-ball_ratio}
						\liminf_{r \to 0} \frac{\crcdf{x}{r}}{\crcdf{x^{\star}}{r}} < 1
						\text{ or } \lim_{r \to 0} \frac{\crcdf{x}{r}}{\crcdf{x^{\star}}{r}} = 1.
			\end{equation}
	\end{enumerate}
\end{lemma}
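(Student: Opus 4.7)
The proof is essentially an unpacking of \Cref{defn:analytic_small-radius_preorder} together with \Cref{defn:greatest_maximal_element}, so the only non-routine step is ensuring correct handling of the exceptional cases in which one or both of the points lie outside $\supp(\mu)$.

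For part \ref{item:0_maximal_implies_in_supp}, I would argue by contraposition: suppose $x^{\star} \notin \supp(\mu)$ and produce a witness showing that $x^{\star}$ is not $\preceq_{0}$-maximal. Since $X$ is separable, $\supp(\mu) \neq \varnothing$, so pick any $y \in \supp(\mu)$. The exceptional clause of \Cref{defn:analytic_small-radius_preorder} gives $x^{\star} \preceq_{0} y$, while the opposite exceptional clause forces $y \preceq_{0} x^{\star}$ to be false. Thus $x^{\star} \preceq_{0} y$ holds without $x^{\star} \succeq_{0} y$ holding, contradicting maximality.

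For part \ref{item:0_maximal_limits}, assume $x^{\star} \in \supp(\mu)$ and translate maximality into the ratio condition. The key step is to rewrite, for $x \in \supp(\mu)$,
\begin{equation*}
	x^{\star} \preceq_{0} x \iff \liminf_{r \to 0} \frac{\crcdf{x}{r}}{\crcdf{x^{\star}}{r}} \geq 1,
	\qquad
	x \preceq_{0} x^{\star} \iff \limsup_{r \to 0} \frac{\crcdf{x}{r}}{\crcdf{x^{\star}}{r}} \leq 1,
\end{equation*}
so that the defining implication of maximality, ``$x^{\star} \preceq_{0} x \Rightarrow x \asymp_{0} x^{\star}$'', becomes exactly ``$\liminf \geq 1 \Rightarrow \lim = 1$'', which is logically equivalent to the dichotomy ``$\liminf < 1$ or $\lim = 1$''. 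For the forward direction I would fix $x \in X$ and split into the cases $\liminf \geq 1$ and $\liminf < 1$; only the first case requires work, and it forces $x \in \supp(\mu)$ because if $x \notin \supp(\mu)$ then $\crcdf{x}{r} = 0$ for all sufficiently small $r$, making $\liminf = 0$. Then maximality, via the translated equivalences above, gives $\limsup \leq 1$ and hence $\lim = 1$. For the backward direction, suppose $x^{\star} \preceq_{0} x$; since $x^{\star} \in \supp(\mu)$, the exceptional case rules out $x \notin \supp(\mu)$, so $x \in \supp(\mu)$ and $\liminf \geq 1$; the hypothesis then rules out the first alternative, leaving $\lim = 1$, which entails $\limsup \leq 1$, i.e.\ $x \preceq_{0} x^{\star}$, as needed.

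No step is really a ``main obstacle''; the one place where one has to be attentive is the interaction of the exceptional clauses of \Cref{defn:analytic_small-radius_preorder} with the definition of maximality — specifically, to ensure in part \ref{item:0_maximal_implies_in_supp} that the existence of a comparator $y \in \supp(\mu)$ is guaranteed (hence the use of separability), and in part \ref{item:0_maximal_limits} that the case $x \notin \supp(\mu)$ is automatically absorbed into the ``$\liminf < 1$'' alternative so that the statement is symmetric over all of $X$ rather than only $\supp(\mu)$.
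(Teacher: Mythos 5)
Your proof is correct and follows essentially the same route as the paper's: both unpack \Cref{defn:analytic_small-radius_preorder} and \Cref{defn:greatest_maximal_element} and translate maximality into $\liminf$/$\limsup$ conditions on the small-ball ratios. One small point in your favour: in the forward direction of part \ref{item:0_maximal_limits}, the paper splits $x \in \supp(\mu)$ into the cases $x \prec_0 x^{\star}$ and $x \asymp_0 x^{\star}$, which tacitly omits the possibility $x \incomp_0 x^{\star}$ (harmless, since incomparability also forces $\liminf < 1$ by \Cref{lem:incomp_0}); by instead splitting directly on $\liminf < 1$ versus $\liminf \geq 1$, your argument absorbs the incomparable case without comment and is marginally cleaner.
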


\begin{proof}
	\begin{enumerate}[label=(\alph*)]
			\item
			Suppose that $x^{\star}$ is maximal but, for a contradiction, suppose also that $x^{\star} \notin \supp(\mu)$.
			As $X$ is separable, take $x \in \supp(\mu) \neq \varnothing$.
			By the exceptional cases of \Cref{defn:analytic_small-radius_preorder}, $x^{\star} \prec_0 x$, contradicting the assumption that $x^{\star}$ is maximal.

			\item
			Let $x, x' \in \supp(\mu)$. 
			It is straightforward to verify from the definitions that
			\begin{align}
					\label{eq:dominant_analytic_order}
					x \prec_0 x' &\iff \liminf_{r \to 0} \frac{\crcdf{x}{r}}{\crcdf{x'}{r}} < 1, \\
					\label{eq:equiv_analytic_order}
					x \asymp_0 x' &\iff \lim_{r \to 0} \frac{\crcdf{x}{r}}{\crcdf{x'}{r}} = 1.
			\end{align}
			Suppose first that $x^{\star}$ is maximal, and let $x \in X$ be arbitrary.
			If $x \notin \supp(\mu)$, then $\crcdf{x}{r} = 0$ for all sufficiently small $r$, so \eqref{eq:liminf_of_small-ball_ratio} holds.
			If $x \in \supp(\mu)$, then maximality of $x^{\star}$ implies that either $x \prec_0 x^{\star}$ or $x \asymp_0 x^{\star}$, from which \eqref{eq:liminf_of_small-ball_ratio} follows.

			Conversely, suppose that $x \in X$ satisfies $x^{\star} \preceq_0 x$.
			The exceptional cases in \Cref{defn:analytic_small-radius_preorder} imply that $x \in \supp(\mu)$.
			Hence, by \eqref{eq:equiv_analytic_order}, $x^{\star} \asymp_0 x$, proving that $x^{\star}$ is $\preceq_0$-maximal.
			\qedhere
	\end{enumerate}
\end{proof}

\begin{lemma}[Characterisation of weak modes]
	\label{lem:GWM_greatest_maximal}
	Let $X$ be separable and let $\mu \in \prob{X}$.
	Then the following are equivalent:
	\begin{enumerate}[label=(\alph*)]
		\item
		\label{item:GWM_greatest_maximal_GWM}
		$x^{\star} \in X$ is a weak mode for $\mu$;

		\item
		\label{item:GWM_greatest_maximal_greatest}
		$x^{\star} \in X$ is a $\preceq_{0}$-greatest element;

		\item
		\label{item:GWM_greatest_maximal_maximal}
		$x^{\star} \in X$ is a $\preceq_{0}$-maximal element that is comparable with every other $x' \in X$.
	\end{enumerate}
\end{lemma}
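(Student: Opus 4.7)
The plan is to prove the three conditions equivalent via the cycle (b) $\Rightarrow$ (c) $\Rightarrow$ (b), paired with (a) $\Leftrightarrow$ (b) which follows by unpacking definitions and using the exceptional-case conventions of \Cref{defn:analytic_small-radius_preorder} together with \Cref{lem:properties_of_maximal}\ref{item:0_maximal_implies_in_supp}.

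First I would show (a) $\Leftrightarrow$ (b). Assume $x^{\star}$ is a weak mode, so that $x^{\star} \in \supp(\mu)$ and $\limsup_{r \to 0} \crcdf{x'}{r}/\crcdf{x^{\star}}{r} \leq 1$ for every $x' \in X$. For $x' \in \supp(\mu)$ this is exactly the condition $x' \preceq_{0} x^{\star}$ in the main clause of \eqref{eq:preceq_0}; for $x' \notin \supp(\mu)$ the relation $x' \preceq_{0} x^{\star}$ holds by the second exceptional clause. Hence $x^{\star}$ is greatest. Conversely, if $x^{\star}$ is $\preceq_{0}$-greatest then, since $X$ is separable, $\supp(\mu)$ is non-empty, so pick any $x \in \supp(\mu)$; the first exceptional clause forces $x^{\star} \in \supp(\mu)$, since otherwise $x \not\preceq_{0} x^{\star}$. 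Now reading \eqref{eq:preceq_0} in the other direction translates $x' \preceq_{0} x^{\star}$ (for $x' \in \supp(\mu)$) into the weak-mode limsup inequality, while for $x' \notin \supp(\mu)$ the inequality $\limsup_{r \to 0} \crcdf{x'}{r}/\crcdf{x^{\star}}{r} \leq 1$ is automatic because the numerator vanishes for small $r$.

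The implication (b) $\Rightarrow$ (c) is immediate from \Cref{defn:greatest_maximal_element}: a greatest element is maximal, and by definition it is comparable with every $x' \in X$ (indeed $x' \preceq_{0} x^{\star}$).

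For (c) $\Rightarrow$ (b), fix $x' \in X$. By the comparability hypothesis, either $x' \preceq_{0} x^{\star}$, in which case we are done, or $x^{\star} \preceq_{0} x'$; in the latter case, maximality of $x^{\star}$ forces $x^{\star} \succeq_{0} x'$, i.e.\ $x' \preceq_{0} x^{\star}$. Either way $x' \preceq_{0} x^{\star}$, so $x^{\star}$ is greatest.

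The main obstacle, insofar as there is one, is bookkeeping around the exceptional clauses of \Cref{defn:analytic_small-radius_preorder}: one must be careful that ``$x^{\star}$ is a weak mode'' already builds in $x^{\star} \in \supp(\mu)$ (so that the denominators in \eqref{eq:weak_mode} are positive for small $r$) and that conversely a $\preceq_{0}$-greatest element is automatically in $\supp(\mu)$. Once this is in place the three conditions unpack into the same assertion.
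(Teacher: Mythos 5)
Your proposal is correct and follows essentially the same route as the paper: the $\mathrm{(a)}\Leftrightarrow\mathrm{(b)}$ unwinding via the exceptional support clauses and the $\mathrm{(b)}\Leftrightarrow\mathrm{(c)}$ step are both present in the paper, which treats the latter as obvious while you spell it out slightly more. The subtle point you flag --- deducing $x^{\star} \in \supp(\mu)$ from greatness --- is exactly the one the paper handles by invoking \Cref{lem:properties_of_maximal}.
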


\begin{proof}
	(\ref{item:GWM_greatest_maximal_GWM}$\implies$\ref{item:GWM_greatest_maximal_greatest})$\quad$
	Suppose that $x^{\star}$ is a weak mode for $\mu$.
	Then, by definition (see \eqref{eq:weak_mode}), it follows that $x \preceq_{0} x^{\star}$ for each $x \in \supp(\mu)$.
	As $x^{\star} \in \supp(\mu)$, any point $x \notin \supp(\mu)$ satisfies
	$x \preceq_{0} x^{\star}$ by the special cases in the definition of $\preceq_{0}$.

	\noindent(\ref{item:GWM_greatest_maximal_greatest}$\implies$\ref{item:GWM_greatest_maximal_GWM})$\quad$
	Suppose that $x^{\star}$ is a $\preceq_{0}$-greatest element.
	Then $x^{\star} \in \supp(\mu)$ by \Cref{lem:properties_of_maximal}.
	Hence, for $x' \in \supp(\mu)$, \eqref{eq:weak_mode} holds because $x' \preceq_{0} x^{\star}$.
	For	$x' \notin \supp(\mu)$, we obtain
	\begin{equation*}
		\frac{\crcdf{x'}{r}}{\crcdf{x^{\star}}{r}} = 0 \text{ for sufficiently small } r,
	\end{equation*}
	proving that $x^{\star}$ is a weak mode.

	\noindent(\ref{item:GWM_greatest_maximal_greatest}$\iff$\ref{item:GWM_greatest_maximal_maximal})$\quad$
	This is obvious, since the defining property of being greatest is exactly the property of being maximal and globally comparable.
\end{proof}

The preorder $\preceq_{0}$ does have some shortcomings.
One is that, in contrast to $\preceq_{r}$ with $r > 0$ (\Cref{lem:upper_closure_is_closed_and_bounded}), upward closures under $\preceq_{0}$ need be neither closed nor bounded.

\begin{example}
	\label{eg:upward_closures_wrt_preceq_0}

	\begin{enumerate}[label=(\alph*)]
		\item 
		\label{item:upward_closures_wrt_preceq_0_1}
		For an example of a non-closed upward closure under $\preceq_{0}$, similar in spirit to the example of \citet{Clason2019GeneralizedModes} mentioned in \Cref{sec:related}, let $\mu \in \prob{\Reals}$ have the Lebesgue density $\rho \colon \Reals \to \Reals$, $\rho(x) \defeq 2 x \one [ 0 \leq x \leq 1 ]$, with $\supp(\mu) = [0, 1]$, and consider $y \in \Reals$.
		If $y < 0$ or $y > 1$, then $y \notin \supp(\mu)$ and $\upclosure_{0} y = \Reals$.
		For $0 \leq y \leq \nicefrac{1}{2}$, $\upclosure_{0} y = [y, 1]$, which is closed.
		However, for $\nicefrac{1}{2} < y < 1$,
		\begin{equation*}
			\lim_{r \to 0} \frac{\crcdf{1}{r}}{\crcdf{y}{r}}
			=
			\lim_{r \to 0} \frac{2 r - r^{2}}{4 y r}
			=
			\frac{1}{2 y}
			<
			1
		\end{equation*}
		and so $\upclosure_{0} y = [y, 1)$, which is not closed.
		Finally, $\upclosure_{0} 1 = [\nicefrac{1}{2}, 1]$, which is closed.

		\item
		\label{item:upward_closures_wrt_preceq_0_2}
		For an example of an unbounded upward closure under $\preceq_{0}$, let $\mu \in \prob{\Reals}$ have the unbounded Lebesgue density $\rho \colon \Reals \to \Reals$,		\begin{equation*}
			\rho(x) \defeq \sum_{n \in \Naturals} n \one \bigl[ n - \tfrac{ 2^{-n - 1} }{ n } \leq x \leq n + \tfrac{ 2^{-n - 1} }{ n } \bigr].
		\end{equation*}
		That is, $\rho$ consists of a sum of disjoint indicator functions centred on the natural numbers $n \in \Naturals$, each having mass $2^{-n}$ and height $n$.
		Then, for any $x, y \in \Naturals$ with $x > y$,
		\begin{equation*}
			\lim_{r \to 0} \frac{\crcdf{x}{r}}{\crcdf{y}{r}}
			=
			\lim_{r \to 0} \frac{2 x r}{2 y r}
			=
			\frac{x}{y}
			>
			1
		\end{equation*}
		and so $\upclosure_{0} y \supseteq \Naturals \cap [y, \infty)$.
	\end{enumerate}
\end{example}

\Cref{eg:upward_closures_wrt_preceq_0} furnishes two examples of measures with no $\preceq_{0}$-maximal element, let alone a $\preceq_{0}$-greatest element (weak mode), or strong mode.
However, these examples are relatively tame: there is no mode simply because any candidate mode $x^{\star}$ is dominated by some other point $x'$.
The \emph{real} shortcoming and subtlety of $\preceq_{0}$ is that it is not total --- that is, the order admits incomparable elements --- and we make this the topic of the next subsection.

\subsection{Criteria for incomparability and comparability}

For $r > 0$, totality of $\preceq_{r}$ followed immediately from \Cref{defn:positive_radius_preorder}.
This is certainly not so obvious for $\preceq_{0}$.
Indeed, what is immediate from \Cref{defn:analytic_small-radius_preorder} is that $\preceq_{0}$-incomparable elements can be characterised as follows:

\begin{lemma}[Incomparability in the limiting preorder]
	\label{lem:incomp_0}
	For $x, x' \in X$,
	\begin{equation}
		\label{eq:incomp_0}
		x \incomp_{0} x' \iff x, x' \in \supp(\mu) \text{ and } \liminf_{r \to 0} \frac{\crcdf{x}{r}}{\crcdf{x'}{r}} < 1 < \limsup_{r \to 0} \frac{\crcdf{x}{r}}{\crcdf{x'}{r}} .
	\end{equation}
\end{lemma}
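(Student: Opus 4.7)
The proof is essentially an unpacking of \Cref{defn:analytic_small-radius_preorder}, done cleanly via a case split on whether $x$ and $x'$ lie in $\supp(\mu)$. My plan is to prove both implications together by analysing when the two conditions $x \preceq_0 x'$ and $x \succeq_0 x'$ \emph{fail} simultaneously, which by definition is exactly when $x \incomp_0 x'$.

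First I would dispose of the necessity of the condition $x, x' \in \supp(\mu)$ by appealing only to the exceptional clauses of \Cref{defn:analytic_small-radius_preorder}. If $x \notin \supp(\mu)$, then by definition $x \preceq_0 y$ holds for every $y \in X$, including $y = x'$, so $x$ and $x'$ are comparable. Symmetrically, if $x' \notin \supp(\mu)$, then $x' \preceq_0 x$ holds, again giving comparability. Consequently $x \incomp_0 x'$ forces both $x, x' \in \supp(\mu)$, which handles one conjunct of the right-hand side of \eqref{eq:incomp_0} and also reduces the remaining analysis to the non-exceptional clause of the definition.

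Assuming now $x, x' \in \supp(\mu)$, both radial masses $\crcdf{x}{r}$ and $\crcdf{x'}{r}$ are strictly positive for every $r > 0$, so the ratios in \eqref{eq:preceq_0} are finite and the two equivalent formulations of $\preceq_0$ given there are available. The next step is simply to negate each relation:
\begin{align*}
    \neg ( x \preceq_0 x' ) &\iff \limsup_{r \to 0} \frac{\crcdf{x}{r}}{\crcdf{x'}{r}} > 1 , \\
    \neg ( x \succeq_0 x' ) &\iff \limsup_{r \to 0} \frac{\crcdf{x'}{r}}{\crcdf{x}{r}} > 1 \iff \liminf_{r \to 0} \frac{\crcdf{x}{r}}{\crcdf{x'}{r}} < 1 ,
\end{align*}
where the second equivalence on the lower line uses the standard identity $\limsup_{r \to 0} (1/a_r) = 1 / \liminf_{r \to 0} a_r$ for positive $a_r$, applied with $a_r = \crcdf{x}{r}/\crcdf{x'}{r}$. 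Combining the two negations yields precisely the chain $\liminf < 1 < \limsup$ appearing on the right-hand side of \eqref{eq:incomp_0}.

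Putting the two parts together proves both directions at once: $x \incomp_0 x'$ is by definition the conjunction $\neg ( x \preceq_0 x' ) \wedge \neg ( x \succeq_0 x' )$, which, together with the forced membership $x, x' \in \supp(\mu)$, is equivalent to the right-hand side of \eqref{eq:incomp_0}. I do not anticipate any real obstacle; the only thing requiring care is keeping the exceptional cases in \Cref{defn:analytic_small-radius_preorder} straight, so that we do not inadvertently declare a pair comparable (or incomparable) based on the ratio criterion when one of the points lies outside $\supp(\mu)$ and the ratio would be ill-defined.
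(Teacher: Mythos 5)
Your proof is correct and is exactly the unpacking that the paper leaves implicit: the paper states \Cref{lem:incomp_0} as ``immediate from \Cref{defn:analytic_small-radius_preorder}'' without writing out the argument, and your case split on membership in $\supp(\mu)$ followed by negating the two $\limsup$/$\liminf$ formulations, using $\limsup_{r \to 0} (1/a_r) = 1/\liminf_{r \to 0} a_r$ for positive $a_r$, is precisely the intended reasoning.
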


In the other direction, we can give a (very strong) sufficient condition for two points to be comparable under $\preceq_{0}$:

\begin{lemma}
	Let $X$ be any metric space and let $\mu \in \prob{X}$.
	Suppose that on some interval $(0, r^{\star})$, the function $r \mapsto \nicefrac{\crcdf{x}{r}}{\crcdf{x'}{r}}$ is uniformly continuous for $x$, $x' \in \supp(\mu)$.
	Then $x$ and $x'$ are $\preceq_{0}$-comparable.
\end{lemma}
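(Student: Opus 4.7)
The plan is to exploit the fact that uniform continuity on a bounded open interval automatically forces the existence of boundary limits, so that the bothersome gap between $\liminf$ and $\limsup$ in \eqref{eq:incomp_0} simply cannot open up. Throughout, write $\phi(r) \defeq \crcdf{x}{r}/\crcdf{x'}{r}$, which is well defined and strictly positive on $(0, r^{\star})$ since both $x$ and $x'$ lie in $\supp(\mu)$.

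First I would invoke the standard extension principle from elementary analysis: every uniformly continuous function on a bounded metric space extends uniquely to a continuous function on its completion. Applied to $\phi$ on $(0, r^{\star})$, this gives the existence of a finite limit $L \defeq \lim_{r \to 0^{+}} \phi(r) \in [0, \infty)$. (One does this by showing that if $r_{n} \to 0$ then $(r_{n})$ is Cauchy and uniform continuity transports this to $\phi(r_{n})$, whose Cauchy limit is independent of the approximating sequence.) Note that the hypothesis rules out $L = \infty$ outright, since uniformly continuous functions are bounded on bounded domains.

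With $L$ in hand, the result follows by a short case split. If $L \leq 1$, then $\limsup_{r \to 0} \phi(r) = L \leq 1$, so by \eqref{eq:preceq_0} we have $x \preceq_{0} x'$. If $L > 1$ (in particular if $L = +\infty$ were allowed, but we have already excluded that), then for all sufficiently small $r$, $1/\phi(r)$ converges to $1/L < 1$, so $\limsup_{r \to 0} \crcdf{x'}{r}/\crcdf{x}{r} = 1/L \leq 1$ and hence $x' \preceq_{0} x$. In either case, $x$ and $x'$ are $\preceq_{0}$-comparable.

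There is essentially no obstacle here: the whole argument is driven by the fact that uniform continuity prohibits oscillation at the boundary, and this immediately collapses the $\liminf < 1 < \limsup$ phenomenon characterising incomparability in \Cref{lem:incomp_0}. The reason the hypothesis is flagged as ``very strong'' in the surrounding discussion is precisely that it is so restrictive that it trivialises the limiting behaviour of $\phi$, whereas the interesting measures considered elsewhere in \Cref{sec:limiting_preorders} are those for which $\phi$ oscillates wildly near $r = 0$ and uniform continuity therefore fails.
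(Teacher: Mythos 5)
Your proof is correct and follows essentially the same route as the paper: both arguments hinge on the standard extension principle for uniformly continuous functions on bounded (totally bounded) domains, which forces the ratio $\crcdf{x}{r}/\crcdf{x'}{r}$ to have a limit as $r \to 0^{+}$, from which comparability is immediate via \Cref{lem:incomp_0}. You spell out the case split ($L \leq 1$ versus $L > 1$) a bit more explicitly than the paper, which simply defers to \Cref{lem:incomp_0}, but there is no substantive difference in approach.
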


\begin{proof}
	The ratio function $\nicefrac{\crcdf{x}{r}}{\crcdf{x'}{r}}$ can be uniquely extended to a uniformly continuous function on $[0, r^{\star}]$ \citep[Lemma~3.11]{AliprantisBorder2006}.
	By continuity, the limit of the ratio function as $r \to 0$ must exist;
	the result follows by \Cref{lem:incomp_0}.
\end{proof}

The previous two lemmas hint at a way to construct concrete examples of measures with incomparable points under $\preceq_{0}$:
one must choose the masses around two points such that the ratio of the masses of balls around such points oscillates as $r \to 0$.

We now construct such a measure on $\Reals$ with a Lebesgue density and two $\preceq_{0}$-incomparable maximal points, neither of which is $\preceq_0$-greatest.
(\Cref{sec:dense_antichains} will supply even more extreme and general examples, but it is pedagogically useful to consider a simpler construction first.)
The idea is to construct a density so that the measure $\mu \in \prob{\Reals}$ induced by it has a specific behaviour around the points $x = \pm 1$.
In this case, the density is chosen so that $r \mapsto \crcdf{x}{r}$ piecewise linearly interpolates the function $r \mapsto \sqrt{r}$ through either the interpolation knots $r = a^{-n}$ with $n \in \Naturals$ even or the interpolation knots $r = a^{-n}$ with $n \in \Naturals$ odd, where $a > 1$ is chosen arbitrarily.
It turns out that these mild perturbations of the integrable singularity $\rho(x) \propto \absval{ x }^{-1/2}$ produce ``incomparable modes''.

\begin{figure}[t]
	\centering
	\begin{subfigure}[t]{0.49\textwidth}
		\centering
		\includegraphics[width=\textwidth]{./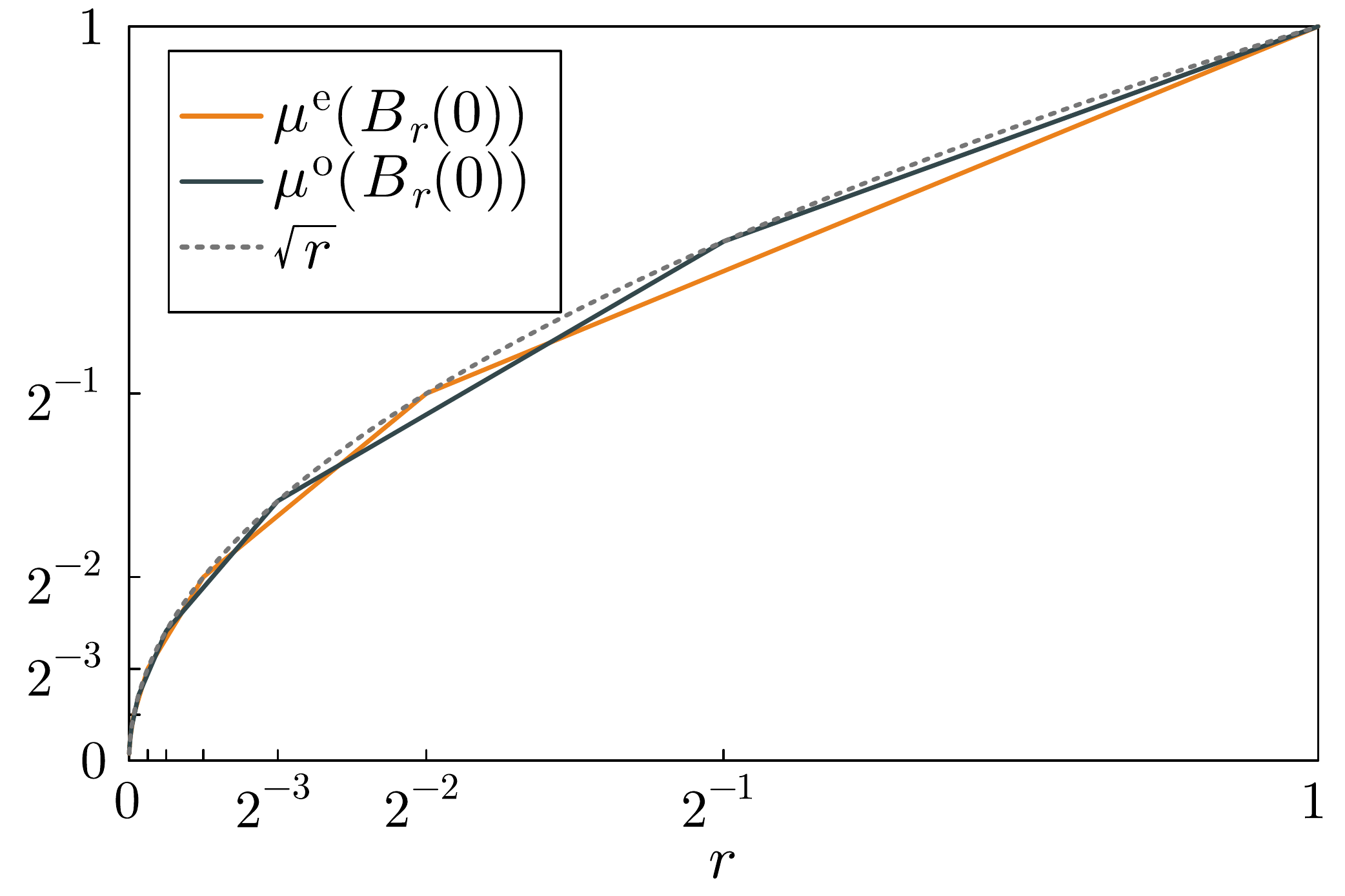}
		\subcaption{\raggedright The RCDFs $\mu^{\even}(\cball{0}{r})$ and $\mu^{\odd}(\cball{0}{r})$ (shown on a linear scale) interpolate between the knots $a^{-n}$ to give mild perturbations of the function $r \mapsto \sqrt{r}$.}
	\end{subfigure}
	\begin{subfigure}[t]{0.49\textwidth}
		\centering
		\includegraphics[width=\textwidth]{./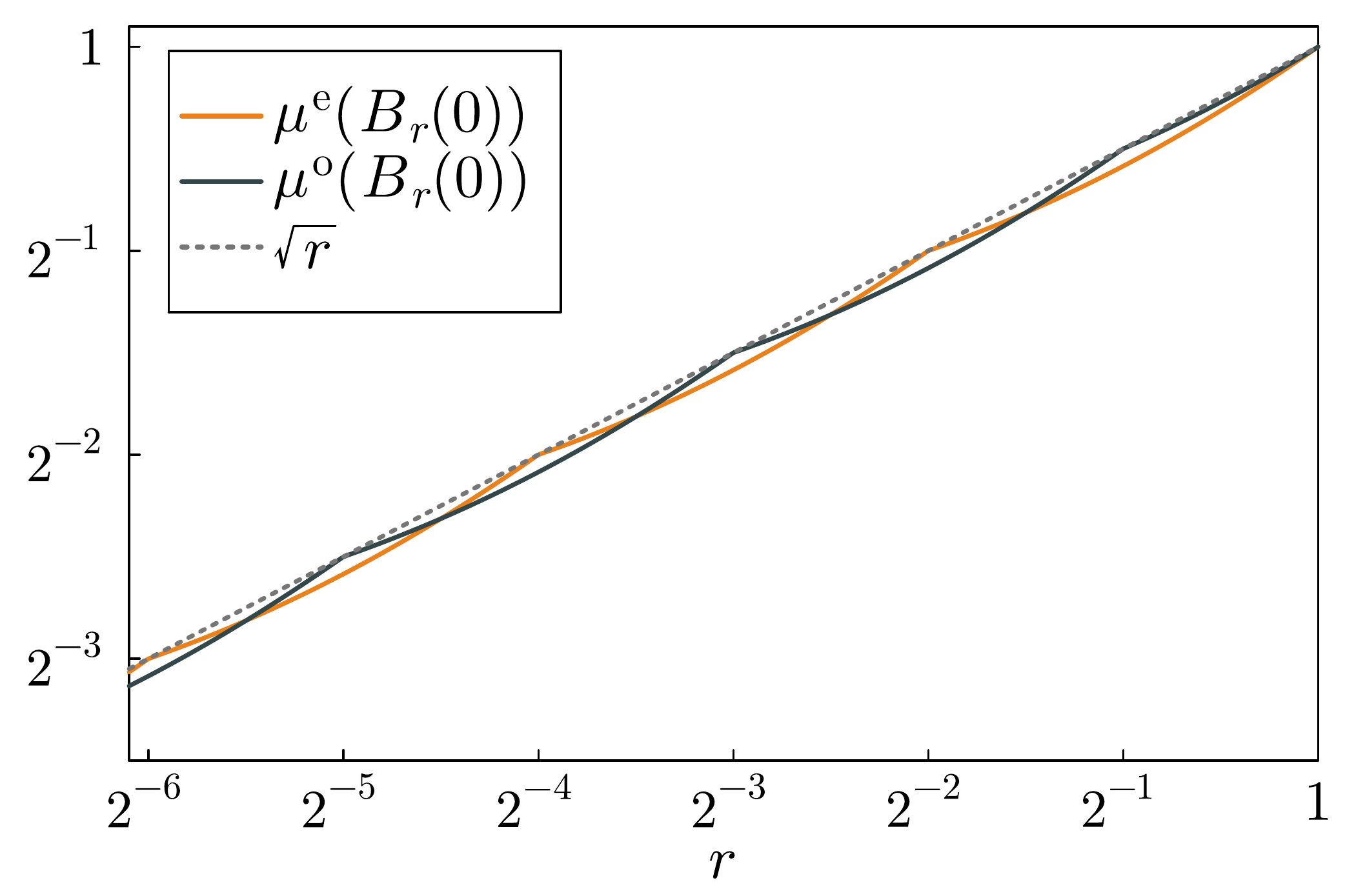}
		\subcaption{\raggedright The RCDFs $\mu^{\even}(\cball{0}{r})$ and $\mu^{\odd}(\cball{0}{r})$ (shown with a logarithmic scale for $r$) agree with the function $r \mapsto \sqrt{r}$ at the knots $a^{-n}$ for even $n$ and odd $n$ respectively.}
	\end{subfigure}
	\begin{subfigure}[t]{0.49\textwidth}
		\centering
		\includegraphics[width=\textwidth]{./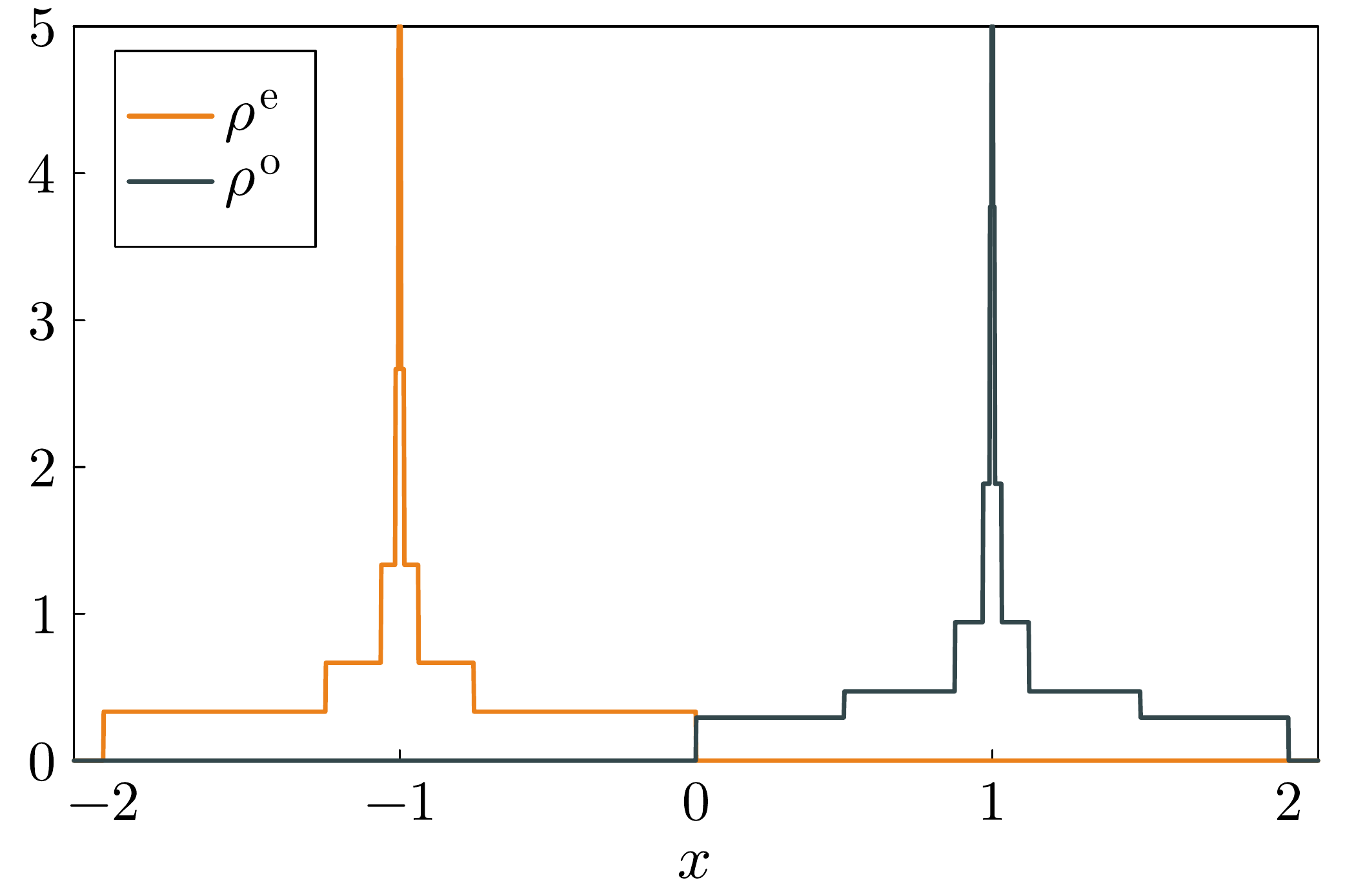}
		\subcaption{\raggedright The probability density functions $\rho^{\even} (\quark + 1)$ and $\rho^{\odd} (\quark - 1)$ have singularities which behave like $\absval{ \quark }^{-\nicefrac{1}{2}}$ at $-1$ and $+1$ respectively.}
	\end{subfigure}
	\begin{subfigure}[t]{0.49\textwidth}
		\centering
		\includegraphics[width=\textwidth]{./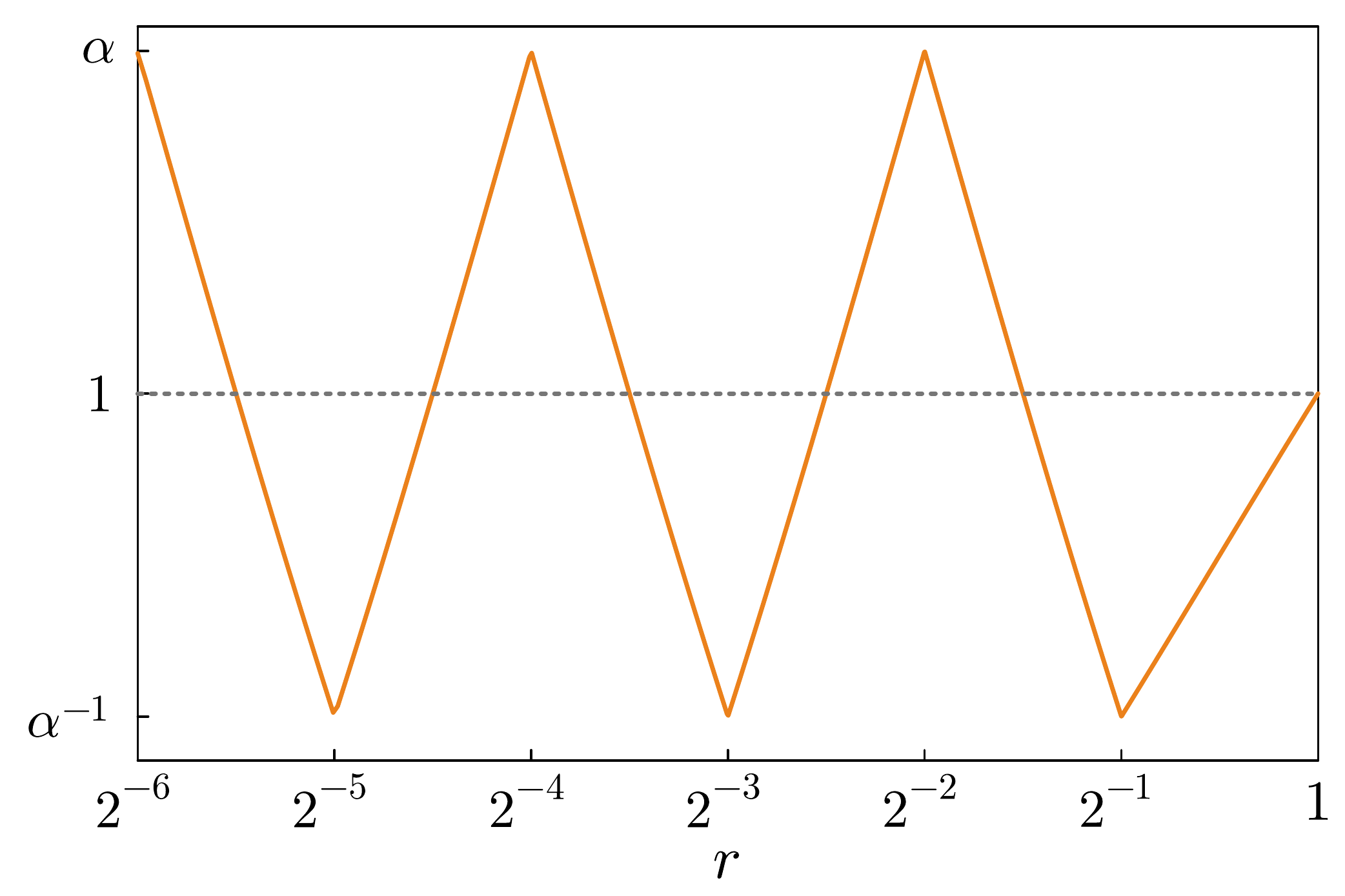}
		\subcaption{\raggedright The ratio $\nicefrac{ \crcdf{-1}{r} }{ \crcdf{+1}{r} }$  (shown with a logarithmic scale for $r$) oscillates between $\alpha$ and $\alpha^{-1}$ as $r \to 0$, so the $\liminf$ of the ratio is below $1$ and the $\limsup$ is above $1$.}
	\end{subfigure}
	\caption{Illustration of the measures defined in \Cref{thm:oscillation_example} for the parameter choice $a = 2$.}
	\label{fig:oscillation_example}
\end{figure}

\begin{example}[An absolutely continuous measure on $\Reals$ with incomparable maximal points and neither weak nor generalised modes; after an example of I.~Klebanov]
	\label{thm:oscillation_example}
	Let $X$ be any Borel-measurable subset of $\Reals$ containing $[-2, 2]$.
	Fix $a > 1$ and, as illustrated in \Cref{fig:oscillation_example}, define $\mu^{\even}, \mu^{\odd} \in \prob{X}$ via their Lebesgue densities $\rho^{\even}, \rho^{\odd} \colon X \to [0, \infty]$,
	\begin{equation*}
		\rho^{\even} (x)
		\defeq
		\begin{cases}
			0, & \text{if $\absval{ x } > 1$,} \\
			\dfrac{a^{\nicefrac{n}{2}} ( 1 - a^{-1} )}{2(1 - a^{-2})} , & \text{if $a^{-2 - n} \leq \absval{ x } \leq a^{-n}$ for even $n \in \Naturals_{0} \defeq \Naturals \cup \{ 0 \} $,} \\
			\infty, & \text{if $x = 0$,}
		\end{cases}
	\end{equation*}
	and
	\begin{equation*}
		\rho^{\odd} (x)
		\defeq
		\begin{cases}
			0, & \text{if $\absval{ x } > 1$,} \\
			\dfrac{1 - a^{-\nicefrac{1}{2}}}{2(1 - a^{- 1})} , & \text{if $a^{-1} \leq \absval{ x } \leq 1$,} \\
			\dfrac{a^{\nicefrac{n}{2}} ( 1 - a^{-1} )}{2(1 - a^{-2})} , & \text{if $a^{-2 - n} \leq \absval{ x } \leq a^{-n}$ for odd $n \in \Naturals$,} \\
			\infty, & \text{if $x = 0$,}
		\end{cases}
	\end{equation*}
	so that the RCDFs are
	\begin{equation*}
		\mu^{\even} (\cball{0}{r})
		=
		\begin{cases}
			1, & \text{if $r \geq 1$,} \\
			a^{- 1 -\nicefrac{n}{2}} + ( r - a^{-2 - n} ) \dfrac{a^{\nicefrac{n}{2}} ( 1 - a^{-1} )}{1 - a^{-2}} , & \text{if $a^{-2 - n} \leq r \leq a^{-n}$ for even $n \in \Naturals_{0}$,} \\
			0, & \text{if $r = 0$,}
		\end{cases}
	\end{equation*}
	and
	\begin{equation*}
		\mu^{\odd} (\cball{0}{r})
		=
		\begin{cases}
			1, & \text{if $r \geq 1$,} \\
			a^{-\nicefrac{1}{2}} + ( r - a^{-1} ) \dfrac{1 - a^{-\nicefrac{1}{2}}}{1 - a^{- 1}} , & \text{if $a^{-1} \leq r \leq 1$,} \\
			a^{- 1 -\nicefrac{n}{2}} + ( r - a^{-2 - n} ) \dfrac{a^{\nicefrac{n}{2}} ( 1 - a^{-1} )}{1 - a^{-2}} , & \text{if $a^{-2 - n} \leq r \leq a^{-n}$ for odd $n \in \Naturals$,} \\
			0, & \text{if $r = 0$.}
		\end{cases}
	\end{equation*}
	We now consider the probability measure $\mu \defeq \tfrac{1}{2} \mu^{\even} (\quark + 1) + \tfrac{1}{2} \mu^{\odd} (\quark - 1) \in \prob{X}$ with Lebesgue density $\rho \defeq \tfrac{1}{2} \rho^{\even} (\quark + 1) + \tfrac{1}{2} \rho^{\odd} (\quark - 1)$.
	
	We first observe that $\pm 1 \succeq_{0} x$ for any $x \neq \pm 1$.
	For sufficiently small $r > 0$, both $\rho^{\even}$ and $\rho^{\odd}$ are bounded above by a constant on $\cball{x}{r} = [x - r, x + r]$, so that $\crcdf{x}{r} \leq c r$ for some $c \geq 0$.
	On the other hand, by construction, both $\crcdf{-1}{r}$ and $\crcdf{+1}{r}$ are asymptotically equivalent to $\nicefrac{\sqrt{r}}{2}$ as $r \to 0$, from which it follows that ${\pm 1} \succeq_{0} x$.

	However, ${-1}$ and ${+1}$ are incomparable.
	Observe that, for $r = a^{-n}$ with $n \in \Naturals$ even,
	\begin{equation*}
		\frac{\crcdf{-1}{r}}{\crcdf{+1}{r}} = \alpha \defeq \frac{a + 1}{2 a^{1/2}} > 1 ,
	\end{equation*}
	whereas for $r = a^{-n}$ with $n \in \Naturals$ odd, this ratio of ball masses takes the value $\alpha^{-1}$, and, for all $r > 0$, it lies in the interval $[\alpha^{-1}, \alpha]$, all of which can be verified easily from the interpolation formulae for $\mu^{\even} (\cball{0}{r})$ and $\mu^{\odd} (\cball{0}{r})$.
	\Cref{lem:incomp_0} now implies that ${-1} \incomp_{0} {+1}$, since
	\begin{equation*}
		\alpha^{-1} = \liminf_{r \to 0} \frac{\crcdf{-1}{r}}{\crcdf{+1}{r}} < 1 < \limsup_{r \to 0} \frac{\crcdf{-1}{r}}{\crcdf{+1}{r}} = \alpha .
	\end{equation*}

	Thus, the preorder $\preceq_{0}$ induced by $\mu$ has two incomparable maximal elements, namely $\pm 1$, has no greatest elements, and hence $\mu$ has no weak modes (\Cref{lem:GWM_greatest_maximal}).

	We now check that $+1$ and $-1$ are not generalised modes.
	Let $r_n \defeq a^{-2n}$, and suppose that $x_n \to 1$ as $n \to \infty$.
	Choose $N$ large enough that, for all $n \geq N$, $\absval{ x_n - 1 } < \nicefrac{1}{2}$ and $r_n < \nicefrac{1}{2}$.
	As the density $\rho^{\odd}(\quark - 1)$ is a symmetric singularity around $+1$, it follows that $\crcdf{x_n}{r_n} \leq \crcdf{+1}{r_n}$.
	As $M_{r_n} = \crcdf{-1}{r_n}$, we obtain that
	\begin{equation*}
		\liminf_{n \to \infty} \frac{\crcdf{x_n}{r_n}}{M_{r_n}} \leq \liminf_{n \to \infty} \frac{\crcdf{+1}{r_n}}{M_{r_n}} = \alpha^{-1} < 1.
	\end{equation*}
	This proves that $+1$ is not a generalised mode; a similar argument with $(r_n)_{n \in \Naturals} = a^{-2n + 1}$ proves that $-1$ is not a generalised mode.

	Finally, suppose that $x \neq \pm 1$, and let $(r_n)_{n \in \Naturals}$ be any null sequence.
	Let $\varepsilon \defeq \min \bigl\{ \absval{ x - 1 }, \absval{ x + 1 } \bigr\}$.
	Suppose that $x_n \to x$ as $n \to \infty$.
	There must exist $N \in \Naturals$ such that, for all $n \geq N$, $\absval{ x_n - 1 } > \nicefrac{\varepsilon}{2}$ and $\absval{ x_n + 1 } > \nicefrac{\varepsilon}{2}$.
	The Lebesgue density of $\mu$ is bounded on $\Reals \setminus \bigl(\cball{+1}{\nicefrac{\varepsilon}{2}} \cup \cball{-1}{\nicefrac{\varepsilon}{2}}\bigr)$ by some constant $C > 0$, so $\crcdf{x_n}{r_n} \leq Cr_n$ for $n \geq N$.
	As $M_{r_n} \in \Theta\bigl(r_n^{\nicefrac{1}{2}}\bigr)$ as $n \to \infty$, it follows that
	\begin{equation*}
		\liminf_{n \to \infty} \frac{\crcdf{x_n}{r_n}}{M_{r_n}} = 0,
	\end{equation*}
	so $x$ is not a generalised mode.
\end{example}

\Cref{thm:oscillation_example} illustrates a difficulty with weak modes, and one whose cause can be traced to incomparability:
if the space $X$ is partitioned into disjoint positive-mass sets $A$ and $B$, existence of modes for $\mu$ restricted to (or conditioned upon) $A$ and $B$ individually cannot ensure existence of a mode for $\mu$, since the modes of $\mu|_{A}$ and $\mu|_{B}$ may be $\preceq_{0}$-incomparable.

Thus, while $\pm 1$ are intuitively modes and have Lebesgue density $+\infty$, the measure $\mu$ has no modes in any of the senses defined in \Cref{sec:related}.
We emphasise that one cannot simply declare all points with Lebesgue density $+\infty$ to be modes, since this would place all singularities of the density on the same footing, which is clearly undesirable if one singularity is genuinely ``smaller'' than the other in the sense that the RCDFs around these points are, say, $\sqrt{r}$ and $2 \sqrt{r}$, and so the smaller one ought not to be considered a mode.

As suggested in the introduction, this example could be interpreted as evidence that maximal --- rather than greatest --- elements of a preorder are good candidates for modes.
Indeed, from the order-theoretic perspective, maximal elements appear to be just as reasonable as greatest elements, and we hope that this encourages further study of whether maximal elements are sufficient for applications.

The extension theorems of Szpilrajn, Arrow, and Hansson \citep{Hansson1968,Szpilrajn1930} assert that any non-total preorder $\preceq$ can be extended to a total preorder $\preceq'$.
Thus, given the non-totality of $\preceq_{0}$, one might hope to resolve all these issues by defining a mode of $\mu$ to be a $\preceq'_{0}$-greatest element.
Unfortunately, such a total extended preorder is not uniquely determined and so such a definition of a mode would not be well defined:
for the measure $\mu$ of \Cref{thm:oscillation_example}, there are total extensions $\preceq'_{0}$ of $\preceq_{0}$ yielding each of the three situations
\begin{equation*}
	- 1 \preceq'_{0} +1 \not\preceq'_{0} -1 ,
	\quad
	+1 \preceq'_{0} -1 \not\preceq'_{0} +1 ,
	\quad
	\text{and } -1 \asymp'_{0} +1 .
\end{equation*}
That is, which (if any) of $\pm 1$ counts as a mode would seem to be a matter of personal choice.

Finally, we note that similar ideas could be used to construct incomparable points that are not $\preceq_{0}$-maximal, but such examples have less importance for the theory of modes.

\subsection{Absolutely continuous measures with dense antichains}
\label{sec:dense_antichains}

\Cref{thm:oscillation_example} can be easily extended to construct a measure $\mu \in \prob{\Reals}$ with any finite number of mutually incomparable $\preceq_{0}$-maximal elements, none of which are greatest elements.
Indeed, it is natural to wonder how bad the situation of incomparability can be, and in particular how large an antichain can be.
This section's main result, \Cref{thm:countable_dense_antichain}, shows that $\mu$ may have a topologically dense antichain consisting of maximal elements (and mutually incomparable would-be modes are ``nearly everywhere''), even when $\mu$ has a Lebesgue density;
from the perspective of geometric measure theory, the notable point here is that there is no need to resort to singular measures.

We begin with the following straightforward proposition:

\begin{proposition}
	\label{prop:finite_discrete_no_incomp}
	Let $X$ be a finite or discrete metric space and let $\mu \in \prob{X}$.
	Then $\preceq_{0}$ has no incomparable elements.
\end{proposition}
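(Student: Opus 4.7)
The plan is to exploit the fact that in a discrete metric space every point is isolated, which forces the small-ball ratios in \Cref{defn:analytic_small-radius_preorder} to degenerate in a very favourable way. First I would observe that any finite metric space is discrete (since the minimum pairwise distance between distinct points is positive), so both cases reduce to a single argument. Then, for each $x \in X$, isolation furnishes some $r_x > 0$ with $\cball{x}{r} = \{x\}$ for all $0 < r \leq r_x$; consequently $\crcdf{x}{r} = \mu(\{x\})$ is eventually constant in $r$, and $x \in \supp(\mu)$ if and only if $\mu(\{x\}) > 0$.

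With this reduction in hand, I would run a brief case analysis on whether $x, x' \in \supp(\mu)$ to show that they are always $\preceq_0$-comparable. When both lie in $\supp(\mu)$, then for all sufficiently small $r > 0$ the ratio $\nicefrac{\crcdf{x}{r}}{\crcdf{x'}{r}}$ equals the positive constant $\nicefrac{\mu(\{x\})}{\mu(\{x'\})}$, so its $\liminf$ and $\limsup$ as $r \to 0$ coincide; depending on whether this constant is $\leq 1$ or $\geq 1$, one obtains either $x \preceq_0 x'$ or $x' \preceq_0 x$ directly from \eqref{eq:preceq_0}. The remaining cases follow immediately from the exceptional clauses of \Cref{defn:analytic_small-radius_preorder}: if exactly one of $x, x'$ lies outside $\supp(\mu)$, then the outside point is $\preceq_0$-dominated by the other by the very definition of those clauses; and if both lie outside $\supp(\mu)$, then $x \preceq_0 x'$ and $x' \preceq_0 x$ both hold, giving $x \asymp_0 x'$. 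In every case the two elements are comparable, so no antichain of size two can exist.

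There is essentially no obstacle to this argument, since totality of $\leq$ on $[0, \infty)$ passes through the small-radius limit as soon as the ratio $\nicefrac{\crcdf{x}{r}}{\crcdf{x'}{r}}$ stabilises, which is precisely what isolation of points secures. The only mild care required is bookkeeping around the exceptional cases in \Cref{defn:analytic_small-radius_preorder}, but those are designed exactly so that points outside $\supp(\mu)$ behave as $\preceq_0$-minima and hence remain comparable with everything else in $X$.
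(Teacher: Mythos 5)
Your proof is correct and follows essentially the same route as the paper's: reduce to the observation that in a discrete space each point is isolated, so $\crcdf{x}{r} = \mu(\{x\})$ for $r$ small enough, whence the ratio of RCDFs stabilises and its $\liminf$ and $\limsup$ coincide, precluding oscillation on either side of unity. The only difference is bookkeeping: the paper invokes \Cref{lem:incomp_0} to dispense with the cases where one or both points lie outside $\supp(\mu)$ in a single stroke, whereas you unfold those cases explicitly from the exceptional clauses of \Cref{defn:analytic_small-radius_preorder}; both are fine. (One tiny slip: with $r_x$ chosen so that $\oball{x}{r_x} = \{x\}$, you get $\cball{x}{r} = \{x\}$ only for $0 < r < r_x$, not $r \leq r_x$, since the closed ball of radius exactly $r_x$ may catch other points; this does not affect the argument.)
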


\begin{proof}
	Let $x, x' \in \supp(\mu)$.
	As $X$ is discrete, the measure $\mu$ must be atomic, so
	\begin{equation*}
		\lim_{r \to 0} \frac{\crcdf{x}{r}}{\crcdf{x'}{r}} = \frac{\mu(\{x\})}{\mu(\{x'\})}.
	\end{equation*}
	As the limit exists, the ratio does not oscillate on either side of unity as $r \to 0$, so comparability of $x$ and $x'$ follows from \Cref{lem:incomp_0}.
\end{proof}

\Cref{prop:finite_discrete_no_incomp} shows that any measure on a finite metric space induces a total order $\preceq_0$. 
We now show that incomparability can arise even in very simple settings, such as in a countable metric space or on the real line with a continuous, bounded Lebesgue density.

\begin{example}
	\label{eg:countable_space_antichain}

	\begin{enumerate}[label=(\alph*)]
		\item 
		\label{item:countable_space_antichain_1}
		Let $X$ be the closure of the set $\set{-1 + 2^{-n}}{n \in \Naturals} \cup \set{1 - 2^{-n}}{n \in \Naturals}$ with the Euclidean metric inherited from $\Reals$.
		Define the measure $\mu \in \prob{X}$ by
		\begin{equation*}
			\mu \defeq \frac{1}{Z} \sum_{k = 1}^{\infty} 2^{-4k+1} \delta_{1-2^{-4k+1}} + 2^{-4k-1}\delta_{-1 + 2^{-4k}},
		\end{equation*}
		where $Z > 0$ is a normalisation constant.
		Then $+1$ and $-1$ are incomparable because 
		\begin{align}
			\label{eq:countable_space_antichain_1}
			\frac{\crcdf{+1}{2^{-4k+1}}}{\crcdf{-1}{2^{-4k+1}}} &= \frac{(15Z)^{-1} \times 2^{-4k+1}}{4 \times (15Z)^{-1} \times 2^{-4k+1}} = \frac{1}{4}, \\
			\label{eq:countable_space_antichain_2}
			\frac{\crcdf{+1}{2^{-4k-1}}}{\crcdf{-1}{2^{-4k-1}}} &= \frac{\crcdf{+1}{2^{-4(k+1)+3}}}{\crcdf{-1}{2^{-4k-1}}} = \frac{4 \times (15Z)^{-1} \times 2^{-4(k+1)+3}}{(15Z)^{-1} \times 2^{-4k-1}} = 4.
		\end{align}

		\item
		\label{item:countable_space_antichain_2}
		Let $X = \Reals$ and define the densities
		\begin{equation*}
			\Delta_{w, h}(x) = \begin{cases}
				h\left(1 - \frac{|x|}{w} \right) & |x| \leq w\\
				0 & \text{otherwise,}
			\end{cases}
		\end{equation*}
		which have total mass $wh$ and are supported on the interval $[-w, w]$. 
		By analogy with part \ref{item:countable_space_antichain_1}, let $\mu \in \prob{\Reals}$ have the continuous, bounded Lebesgue density
		\begin{equation*}
			\rho(x) = \frac{1}{Z} \sum_{k \in \Naturals} \Delta_{2^{-4k+1},1}\bigl(x - (1 - 2^{-4k + 2})\bigr) + \Delta_{2^{-4k - 1}, 1}\bigl(x - (-1 + 2^{-4k})\bigr),
		\end{equation*}
		where $Z > 0$ is a normalisation constant.
		As \eqref{eq:countable_space_antichain_1} and \eqref{eq:countable_space_antichain_2} remain true for the measure $\mu$ in this example, the points $\pm 1$ are incomparable.
	\end{enumerate}
\end{example}

While \Cref{eg:countable_space_antichain}\ref{item:countable_space_antichain_2} shows that even a measure with a continuous, bounded Lebesgue density may have an antichain, we show in \Cref{prop:essentially_total_examples} that this antichain is never at the ``top'' of the order as in \Cref{thm:oscillation_example}.

Examples such as \Cref{thm:oscillation_example,eg:countable_space_antichain} can be extended to show that an antichain may be countably infinite.
To do so, we first introduce a family of ``coprime'' oscillatory RCDFs to generalise the RCDFs $\mu^{\even}$ and $\mu^{\odd}$ of \Cref{thm:oscillation_example}:

\begin{figure}[t]
	\centering
	\begin{subfigure}[t]{0.49\textwidth}
		\centering
		\includegraphics[width=\linewidth]{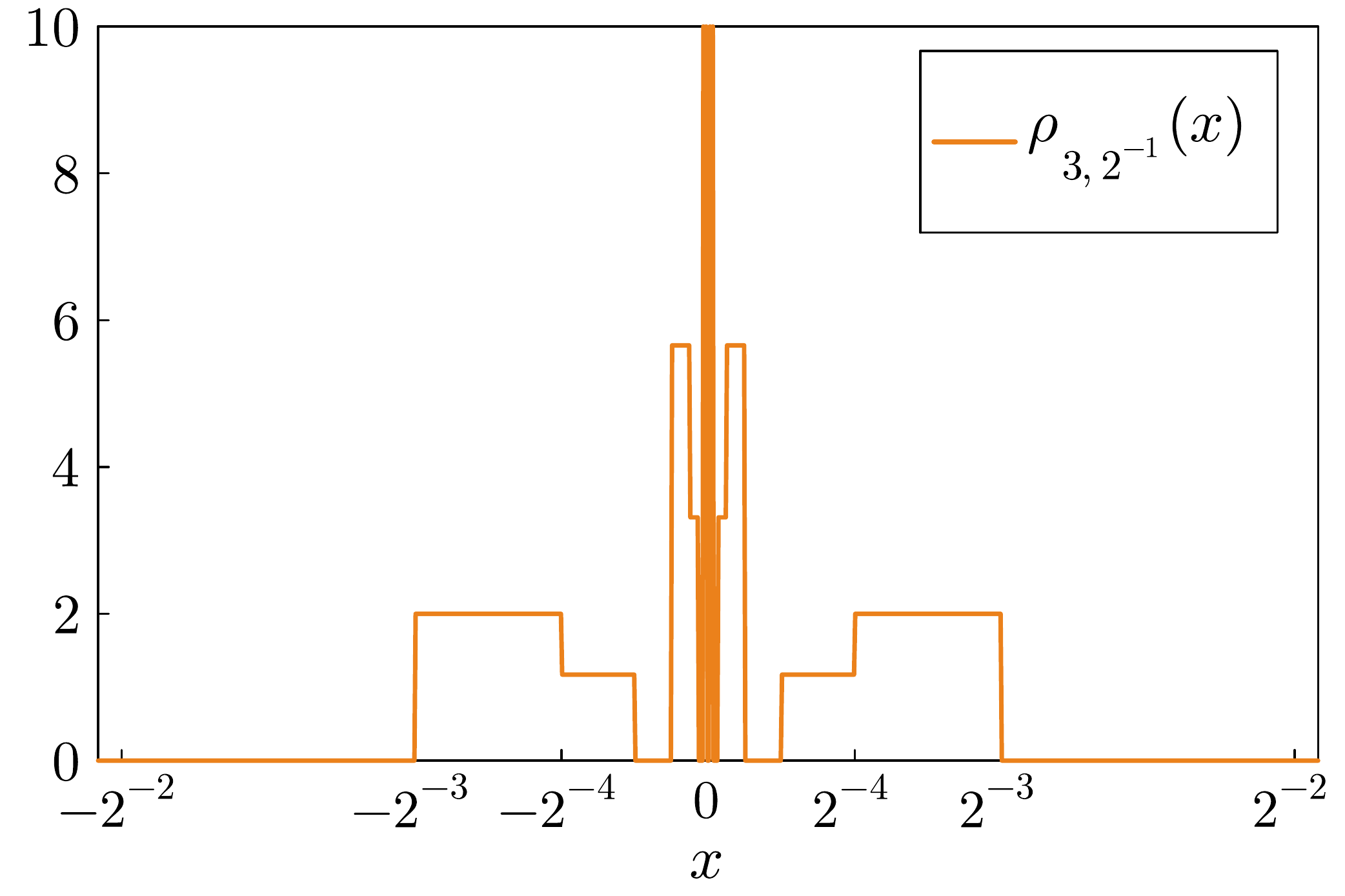}
		\subcaption{\raggedright As in \Cref{thm:oscillation_example}, the density $\rho_{k, m}$ is based on perturbations of the singularity $\absval{ \quark }^{-\nicefrac{1}{2}}$ and has mass $m$.}%
	\end{subfigure}
	\begin{subfigure}[t]{0.49\textwidth}
		\centering
		\includegraphics[width=\linewidth]{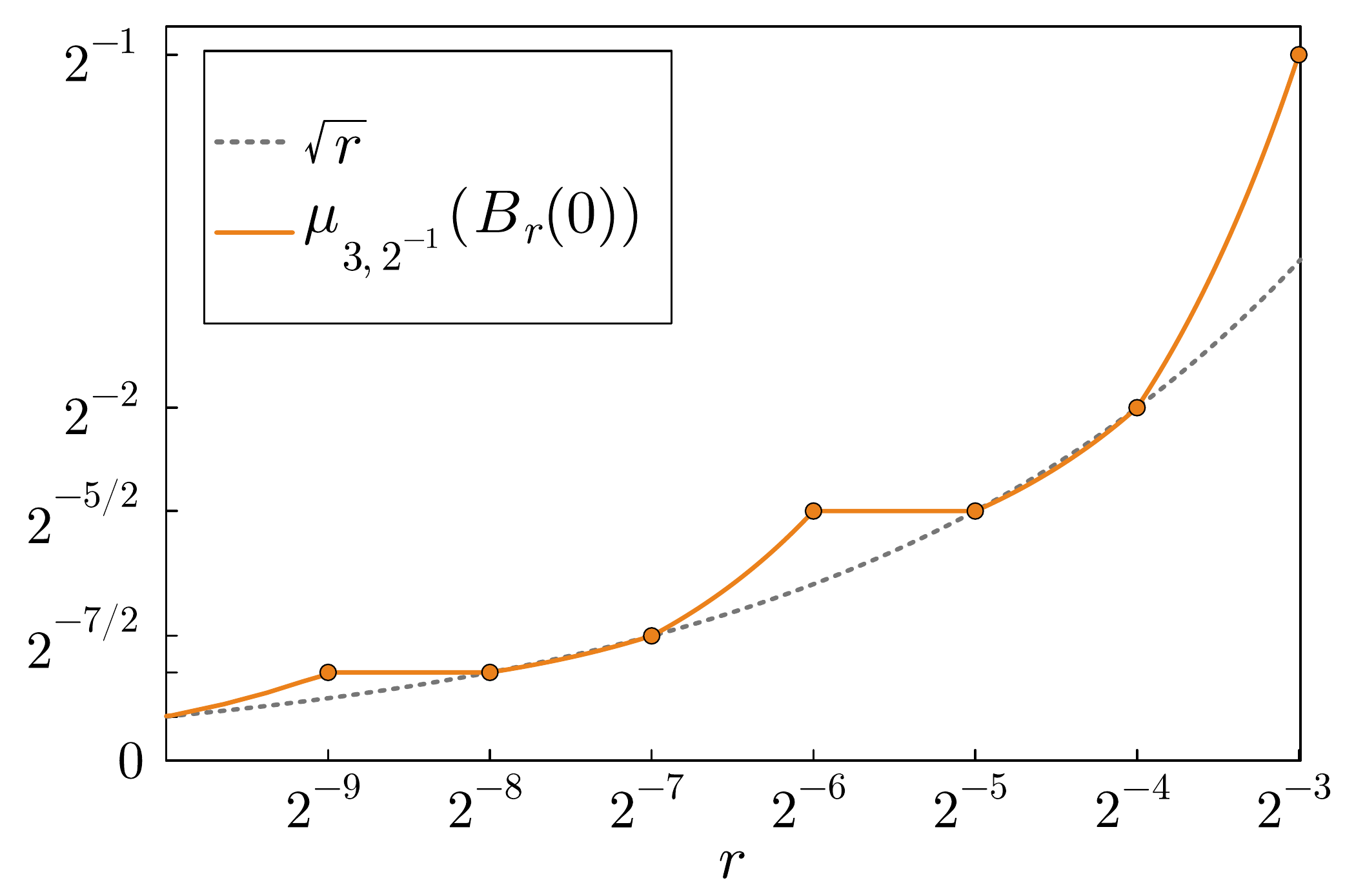}
		\subcaption{\raggedright The RCDF $\mu_{k, m}(\cball{0}{\quark})$ linearly interpolates between the knots $r = a^{-n}$, $n \in \Naturals$ (marked as circles) to obtain the desired perturbations of the ``growth rate'' $\sqrt{r}$.}%
	\end{subfigure}
	\caption{Example of a density $\rho_{k, m}$ and RCDF $\mu_{k,m}$ from \Cref{prop:rcdf_family} with $a = 2$.}
	\label{fig:rcdf_family}
\end{figure}

\begin{proposition}[A family of oscillatory RCDFs]
	\label{prop:rcdf_family}
	Fix $a > 1$ and a natural number $k \geq 2$.
	Construct the Lebesgue densities $\rho_{k} \colon \Reals \to [0, \infty]$ as in \Cref{fig:rcdf_family}\hyperref[fig:rcdf_family]{(a)}, defined by
	\begin{equation*}
		\rho_{k}(x) \defeq
		\begin{cases}
				0, & \text{ if } \absval{ x } > a^{-1},\\
				\frac{1}{2} a^{\frac{n + 1}{2}}, & \text{ if }
				a^{-n-1} < \absval{ x } \leq a^{-n} \text{ for } n \in \Naturals
				\text{ with } k \mid n, \\
				0, & \text{ if } a^{-n-1} < \absval{ x } \leq a^{-n} \text{ for }
				n \in \Naturals  \text{ with } k \mid n + 1, \\
				\frac{1}{2} a^{\frac{n}{2}} \left(\frac{1 - a^{-\nicefrac{1}{2}}}{1 - a^{-1}} \right), &
				\text{ if } a^{-n-1} < \absval{ x } \leq a^{-n} \text{ for } n \in \Naturals
				\text{ with }
				k \nmid n \text{ and } k\nmid n + 1, \\
				\infty, & \text{ if } x = 0,
		\end{cases}
	\end{equation*}
	and, given $m > 0$, define the corresponding truncated densities $\rho_{k, m}(x) \defeq \rho_k(x) \one \left[ |x| \leq r(m) \right]$, with the truncation radius $r(m)$ chosen such that
	\begin{equation*}
		r(m) \defeq \inf \Set{s}{\int_{\cball{0}{s}} \rho_k(t)\,\rd t = m}.
	\end{equation*}
	Write $\mu_{k,m}$ for the measure on the real line with $\mu_{k, m}(\Reals) = m$ and Lebesgue density $\rho_{k,m}$.
	\begin{enumerate}[label=(\alph*)]
		\item 
		\label{item:rcdf_family_1} 
		The RCDF $s \mapsto \mu_{k, m}(\cball{0}{s})$ linearly interpolates between the knots
		\begin{equation*}
			\bigset{(a^{-n}, a^{-\nicefrac{n}{2}})}{n \in \Naturals,~k\nmid n}
			\cup \bigset{(a^{-n}, a^{\nicefrac{1}{2} - \nicefrac{n}{2}})}{n \in \Naturals,~k \mid n} \cup \bigl\{ (0, 0) \bigr\},
		\end{equation*}
		until truncated at radius $r(m)$ (\Cref{fig:rcdf_family}\hyperref[fig:rcdf_family]{(b)}) and has formula
		\begin{equation*}
			\hspace{-2.5em}
			\mu_{k,m}(\cball{0}{s}) =
			\begin{cases}
					\mu_{k,m}(\cball{0}{r(m)}), & \text{if } s > r(m), \\
					a^{\frac{n + 1}{2}} s, & \text{if $a^{-n-1} < s \leq a^{-n}$ for $n \in \Naturals$ with $k \mid n$,} \\
					a^{-\frac{n}{2}}, & \text{if $a^{-n-1} < s \leq a^{-n}$ for $n \in \Naturals$ with $k \mid n + 1$,} \\
					\frac{1 - a^{-\nicefrac{1}{2}}}{1 - a^{-1}}  \left(a^{\frac{n}{2}} s + a^{-\frac{n}{2}}\right), &
					\text{if $a^{-n-1} < s \leq a^{-n}$ for $n \in \Naturals$ with $k \nmid n$ and $k \nmid n + 1$,} \\
					0,& \text{if $s = 0$.}
			\end{cases}
		\end{equation*}

		\item
		\label{item:rcdf_family_2} 
		In particular, if $a^{-n} \leq r(m)$, 
		\begin{equation*}
			\mu_{k, m}(\cball{0}{a^{-n}}) = \begin{cases}
					a^{-\nicefrac{n}{2}}, & \text{if $k \nmid n$,} \\
					a^{\nicefrac{1}{2} - \nicefrac{n}{2}}, & \text{if $k \mid n$.}
			\end{cases}
		\end{equation*}

		\item 
		\label{item:rcdf_family_3} 
		Given distinct coprime integers $k, k' \geq 2$ and arbitrary $m, m' > 0$,
		\begin{equation*}
			\liminf_{s \to 0} \frac{\mu_{k,m}(\cball{0}{s})}{\mu_{k',m'}(\cball{0}{s})} <
			1 <
			\limsup_{s \to 0} \frac{\mu_{k,m}(\cball{0}{s})}{\mu_{k',m'}(\cball{0}{s})}.
		\end{equation*}

		\item 
		\label{item:rcdf_family_4} 
		Provided $s \leq r(m)$, we have $\sqrt{\nicefrac{s}{a}} \leq \mu_{k, m}(\cball{0}{s}) \leq \sqrt{as}$.

		\item 
		\label{item:rcdf_family_5}
		The truncation radius satisfies $r(m) \leq am^2$.

		\item
		\label{item:rcdf_family_6}
		The density $\rho_k$ satisfies $\rho_k(t) \leq t^{-\nicefrac{1}{2}}$ for all $t \in \Reals$.
	\end{enumerate}
\end{proposition}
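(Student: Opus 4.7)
The plan is to proceed through the six parts of the proposition in turn, with the substantive work concentrated in (a), (b), and (c); the remaining parts then reduce to short corollaries or direct case checks.

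For parts (a) and (b), I would begin by noting that $\rho_k$ is piecewise constant on the dyadic annuli $A_n \defeq \{x \in \Reals : a^{-n-1} < \absval{x} \leq a^{-n}\}$ and vanishes for $\absval{x} > a^{-1}$. The map $s \mapsto \int_{-s}^{s} \rho_k(t)\,\rd t$ is therefore continuous, non-decreasing, and affine on each interval $(a^{-n-1}, a^{-n}]$. To verify the knot values claimed in (b), I would proceed by downward induction on $n$ (equivalently, by telescoping the masses $\mu_{k,m}(A_m)$ for $m \geq n$), confirming that $\mu_{k,m}(\cball{0}{a^{-n}})$ equals $a^{\nicefrac{1}{2} - \nicefrac{n}{2}}$ when $k \mid n$ and $a^{-\nicefrac{n}{2}}$ otherwise, making use of the arithmetic observation that if $k \mid n$ then (since $k \geq 2$) $k \nmid n+1$, so the density on $A_{n+1}$ is zero. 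The closed-form piecewise formula in (a) then follows from linear interpolation between these knots, the three listed cases corresponding to the three possible values of the density on the interval under consideration; the truncation at radius $r(m)$ simply caps the RCDF at its value there once $s > r(m)$.

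For (c), I would invoke (b) directly. Provided $s_n \defeq a^{-n} \leq \min\{r(m), r(m')\}$, the ratio $\mu_{k,m}(\cball{0}{s_n})/\mu_{k',m'}(\cball{0}{s_n})$ equals $1$, $a^{\nicefrac{1}{2}}$, or $a^{-\nicefrac{1}{2}}$, according to whether none, only $k$, or only $k'$ of these divides $n$. Since $\gcd(k, k') = 1$, an elementary arithmetic-progression argument produces infinitely many $n$ with $k \mid n$ but $k' \nmid n$ (e.g., taking $n$ of the form $kj$ with $k' \nmid j$, which exist because $\gcd(k,k')=1$), and infinitely many with the roles of $k$ and $k'$ reversed. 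Passing to these two sub-sequences as $n \to \infty$ gives ratio values $a^{\nicefrac{1}{2}} > 1$ and $a^{-\nicefrac{1}{2}} < 1$ respectively, and hence the required oscillation.

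Parts (d)--(f) are essentially corollaries. For (d), I would use (a) to check case-by-case that each of the three affine/constant pieces of the RCDF is sandwiched between $\sqrt{\nicefrac{s}{a}}$ and $\sqrt{as}$ on $(a^{-n-1}, a^{-n}]$, exploiting the fact that the knot values $a^{-\nicefrac{n}{2}}$ and $a^{\nicefrac{1}{2}-\nicefrac{n}{2}}$ themselves lie within this envelope and that the affine interpolant of values obeying the envelope continues to do so. Part (e) follows by applying (d) at $s = r(m)$: since $m = \mu_{k,m}(\cball{0}{r(m)}) \geq \sqrt{\nicefrac{r(m)}{a}}$, solving yields $r(m) \leq am^2$. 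For (f), I would compare the density on each annulus $A_n$ against $\absval{t}^{-\nicefrac{1}{2}}$ evaluated at its worst point $\absval{t} = a^{-n}$ and verify the bound directly in each of the three density cases. The main bookkeeping burden lies in the downward induction of (b) and the cross-case envelope check of (d): the three-case density structure generates boundary effects, with the RCDF being flat on one annulus immediately after rapid growth on its neighbour, and carefully tracking these transitions across the boundary points $s = a^{-n}$ is the crux of the argument.
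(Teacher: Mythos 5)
Your outline matches the paper's proof closely: parts (a), (b), (e), (f) reduce to direct integration and evaluation of the piecewise constant density, and part (c) uses the same coprimality device (the paper takes the explicit subsequence $n_i = (ik'-1)k$, you take $n = kj$ with $k' \nmid j$; either works). The one point of substance is part (d), where your envelope argument for the lower bound has a genuine gap.

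You appeal to the principle that "the affine interpolant of values obeying the envelope continues to do so." That reasoning is valid for the \emph{upper} envelope: $s \mapsto \sqrt{as}$ is concave, so an affine piece lying below it at the two endpoints of a knot interval lies below it throughout. It is \emph{not} valid for the lower envelope: $s \mapsto \sqrt{\nicefrac{s}{a}}$ is also concave, and an affine function dominating a concave function at two points can perfectly well dip below it in between --- the difference is convex, hence can be negative in the interior. The lower bound does in fact hold, but one must check the interior of each piece (e.g.\ on the non-trivial affine pieces the ratio $\mu_{k,m}(\cball{0}{s})/\sqrt{\nicefrac{s}{a}}$ has an interior minimum $2a^{\nicefrac{1}{4}}/(1 + a^{-\nicefrac{1}{2}}) \geq 1$), not merely the endpoints. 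The paper avoids the issue entirely by a monotonicity argument: for $s \leq r(m)$ it steps down to the largest knot radius $a^{-n} \leq s$ and deduces
\begin{equation*}
	\mu_{k,m}(\cball{0}{s}) \geq \mu_{k,m}(\cball{0}{a^{-n}}) \geq a^{-\nicefrac{n}{2}} \geq \sqrt{\nicefrac{s}{a}}
\end{equation*}
from monotonicity of the RCDF, part (b), and $n \leq -\log_a s + 1$. You should adopt that argument, or supply the interior check. (A small incidental slip in (b): the parenthetical "the density on $A_{n+1}$ is zero" should be $A_{n-1}$; when $k \mid n$ the zero-density neighbour of $A_n$ is the one of \emph{larger} radius, since $k \mid (n-1)+1 = n$, while $A_{n+1}$ has zero density only when $k \mid n+2$.)
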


\begin{proof}
	\begin{enumerate}[label=(\alph*)]
		\item
		The formula for the RCDF follows by integrating the density $\rho_{k, m}$.

		\item 
		The value at the knots $a^{-n}$ follows from \ref{item:rcdf_family_1}.

		\item 
		We exploit the fact that $k$ and $k'$ are coprime, so the sequence $(n_i)_{i \in \Naturals} = (ik' - 1)k \nearrow \infty$ is divisible by $k$ but not $k'$, and the sequence $(m_i)_{i \in \Naturals} = (ik - 1)k' \nearrow \infty$ is divisible by $k'$ but not $k$.
		For sufficiently large $i$, $a^{-n_i} \leq \min \{ r(m), r(m') \}$, and hence by \ref{item:rcdf_family_2} we obtain
		\begin{equation*}
			\frac{\mu_{k, m}(\cball{0}{a^{-n_i}})}{\mu_{k', m'}(\cball{0}{a^{-n_i}})} = \frac{a^{\nicefrac{1}{2} - \nicefrac{n_i}{2}}}{a^{-\nicefrac{n_i}{2}}} = a^{\frac{1}{2}}.
		\end{equation*}
		Similarly, for $i$ sufficiently large such that $a^{-m_i} \leq \min \{ r(m), r(m') \}$,
		\begin{equation*}
			\frac{\mu_{k, m}(\cball{0}{a^{-m_i}})}{\mu_{k', m'}(\cball{0}{a^{-m_i}})} = \frac{a^{-\nicefrac{m_i}{2}}}{a^{\nicefrac{1}{2} - \nicefrac{m_i}{2}}} = a^{-\frac{1}{2}}.
		\end{equation*}
		As these hold for all $i$ sufficiently large, and $a^{-n_i}$ and $a^{-m_i}$ converge to zero, the desired inequality follows.

		\item
		The lower bound follows because, for $s \leq r(m)$,
		\begin{equation*}
			\mu_{k,m}(\cball{0}{s}) \geq \mu_{k,m}(\cball{0}{a^{-\lfloor -\log_a(s) \rfloor}})
			\geq a^{-\lfloor -\log_a(s) \rfloor/2} \geq \sqrt{\nicefrac{s}{a}},
		\end{equation*}
		where the penultimate inequality uses \ref{item:rcdf_family_2}; the upper bound is easily verified from the construction of $\mu_{k,m}$ as a linear interpolation of the knots.

		\item
		As $\int_{\cball{0}{s}} \rho_{k}(t) \,\rd t \geq \sqrt{\nicefrac{s}{a}}$, it follows that $\int_{\cball{0}{am^2}} \rho_{k}(t)\,\rd t \geq m$, and hence $r \leq am^2$.

		\item
		This is easily verified from the expression for $\rho_k$.
		\qedhere
	\end{enumerate}	
\end{proof}

We now use \Cref{prop:rcdf_family} to show that a maximal antichain of a measure can be topologically dense even in the apparently well-behaved case of an absolutely continuous probability measure on the real line.
Our example shows that the set of $\preceq_0$-maximal elements might be very different to the set of $\preceq_0$-greatest elements: the measure we construct has a dense set of maximal elements, yet it does not possess any greatest element because none of those maximal elements is globally comparable.

In spirit, the idea is much the same as \Cref{thm:oscillation_example}: centre mutually incomparable compactly supported singularities at a dense collection of points $\{ q_{k} \}_{k \in \Naturals}$.
This is much more subtle, however, as one must take care to ensure that the points $q_{k}$ are distant enough from one another that the singularities neither interfere with each other nor accumulate too much mass at a point outside of the dense set.
Here, this is achieved by taking the $q_{k}$ to be multiples of powers of two, a case that is easily analysed but quite sparse.
Indeed, we write $D$ for the set of dyadic rationals, which we write as the disjoint union over the \defterm{levels} $D_{\ell} \defeq \Set{(2i - 1)2^{-\ell}}{1 \leq i \leq 2^{\ell - 1}}$.
By a slight abuse of terminology, we also describe the sum of the densities centred at points in $D_{\ell}$ as the $\ell\textsuperscript{th}$ level of the measure.

\begin{theorem}[An absolutely continuous measure on $\Reals$ with a countable dense antichain]
	\label{thm:countable_dense_antichain}

	Let $\mu \in \prob{\Reals}$ have the Lebesgue density $\rho \colon \Reals \to \Reals$ as shown in \Cref{fig:countable_antichain}, defined by
	\begin{equation*}
		\rho(x) \defeq \sum_{\ell = 1}^{\infty} \sum_{i = 1}^{2^{\ell - 1}} \rho_{\indexof{\ell}{i}, \massof{\ell}}(x - q_{\ell, i}),
	\end{equation*}
	where $\rho_{k,m}$ is the density constructed in \Cref{prop:rcdf_family} with parameter $a = 2$; $\indexof{\ell}{i}$ is the $(2^{\ell - 1} + i - 1)\textsuperscript{th}$ prime; $\massof{\ell} \defeq 2^{-2\ell+1}$; and $q_{\ell,i} \defeq (2i - 1) 2^{-\ell} \in D_{\ell}$.
	Then:
	\begin{enumerate}[label=(\alph*)]
		\item 
		\label{item:countable_dense_antichain_is_prob_measure}
		the $\ell\textsuperscript{th}$ level of the measure $\mu$, consisting of all densities centred at points in $D_{\ell}$, has mass $2^{-\ell}$, and hence $\mu$ is a probability measure;
		\item 
		\label{item:countable_dense_antichain_incomp}
		the set of dyadic rationals $D = \Set{(2i-1)2^{-\ell}}{\ell \in \Naturals,~1 \leq i \leq 2^{\ell - 1}}$ is a $\preceq_0$-antichain;
		\item
		\label{item:countable_dense_antichain_maximal}
		every element of $D$ is $\preceq_0$-maximal.
	\end{enumerate}
\end{theorem}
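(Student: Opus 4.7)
The approach is to dispatch (a) by direct mass summation, then to establish an \emph{error estimate} $\crcdf{q}{r} = \mu_{\indexof{\ell}{i},\massof{\ell}}(\cball{0}{r}) + o(\sqrt{r})$ for $q = q_{\ell, i} \in D$, which underlies both (b) and (c). Part (a) is immediate: the $\ell\textsuperscript{th}$ level comprises $2^{\ell-1}$ translated densities, each of total mass $\massof{\ell} = 2^{-2\ell+1}$, so the level's total mass is $2^{-\ell}$, and $\sum_{\ell \geq 1} 2^{-\ell} = 1$. (Overlapping supports of the summands do not affect this computation.)

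For the error estimate, I would split the cross-contribution to $\crcdf{q}{r}$ from indices $(\ell', i') \neq (\ell, i)$ into two regimes based on the distance $d_{\ell', i'} \defeq \absval{q - q_{\ell', i'}}$. In the \emph{far} regime ($d_{\ell',i'} \geq 2r$), \Cref{prop:rcdf_family}\ref{item:rcdf_family_6} gives the pointwise bound $\rho_{k'}(t) \leq \absval{t}^{-1/2}$, yielding a per-point contribution of order $r/\sqrt{d_{\ell',i'}}$. Combined with the truncation constraint $d_{\ell',i'} \leq r + r(\massof{\ell'}) \leq r + 2^{-4\ell'+3}$ (from \Cref{prop:rcdf_family}\ref{item:rcdf_family_5}) and the dyadic spacing $2^{-\ell'+1}$ at level $\ell'$, a geometric sum over levels bounds the total far contribution by $O(r) = o(\sqrt{r})$. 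In the \emph{close} regime ($d_{\ell',i'} < 2r$), for small $r$ only levels $\ell' > \log_2(1/(2r))$ can contribute; bounding each per-point contribution by $\massof{\ell'}$ with at most $O(r \cdot 2^{\ell'})$ close points per level gives a total of $O(r^2) = o(\sqrt{r})$.

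With the error estimate and \Cref{prop:rcdf_family}\ref{item:rcdf_family_4} (which provides $\mu_{k, \massof{\ell}}(\cball{0}{r}) \geq \sqrt{r/2}$), part (b) follows: for distinct $q = q_{\ell, i}$ and $q' = q_{\ell', i'}$, the primes $\indexof{\ell}{i}$ and $\indexof{\ell'}{i'}$ are coprime, so by \Cref{prop:rcdf_family}\ref{item:rcdf_family_3} the ratio $\mu_{\indexof{\ell}{i},\massof{\ell}}(\cball{0}{r})/\mu_{\indexof{\ell'}{i'},\massof{\ell'}}(\cball{0}{r})$ has $\liminf < 1 < \limsup$; the error estimate transfers this oscillation to $\crcdf{q}{r}/\crcdf{q'}{r}$ up to a $(1 + o(1))$ factor, and \Cref{lem:incomp_0} gives $q \incomp_0 q'$. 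For part (c), \Cref{lem:properties_of_maximal}\ref{item:0_maximal_limits} reduces maximality of $q$ to verifying, for every $x \neq q$, that $\liminf_{r \to 0} \crcdf{x}{r}/\crcdf{q}{r} < 1$ or the limit exists and equals $1$. The case $x \notin \supp(\mu)$ yields $\liminf = 0$; the case $x \in D$ is handled by (b); and the case $x \in \supp(\mu) \setminus D = [0, 1] \setminus D$ is handled by a variant of the error-estimate argument showing $\crcdf{x}{r} = O(r^2)$, exploiting that a non-dyadic $x$ maintains a fixed positive distance from any finite collection of dyadic rationals.

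The main obstacle is the error estimate itself: although each individual distant singularity contributes only a polynomially small amount, the number of nearby singularities grows with $\ell'$, so one must carefully exploit both the geometric decay of $\massof{\ell'}$ and the much faster decay of the truncation radii $r(\massof{\ell'})$ to ensure that the cumulative cross-contribution is genuinely $o(\sqrt{r})$ and thus cannot overwhelm the principal $\sqrt{r}$-scale singularity at $q$ itself.
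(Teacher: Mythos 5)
Your parts (a) and (b) match the paper's argument: (a) is the same direct mass computation, and your "error estimate" $\crcdf{q}{r} = \mu_{\indexof{\ell}{i},\massof{\ell}}(\cball{0}{r}) + o(\sqrt{r})$ is essentially the asymptotic equivalence $\crcdf{q_{\ell,i}}{r} \sim \mu_{\indexof{\ell}{i},\massof{\ell}}(\cball{0}{r})$ established in \Cref{lem:rcdf_behaviour_countable_dense_antichain}\ref{item:rcdf_behaviour_dyadic_rationals}, which is then combined with \Cref{prop:rcdf_family}\ref{item:rcdf_family_3} and \Cref{lem:incomp_0} exactly as you describe. Your far/close split plays the same role as the paper's level-by-level bookkeeping using \Cref{lem:distance_bounds_countable_dense_antichain}, so this part is sound.

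The gap is in part (c), specifically the claim that $\crcdf{x}{r} = O(r^2)$ for every $x \in [0,1]\setminus D$. This is false. The phrase ``a non-dyadic $x$ maintains a fixed positive distance from any finite collection of dyadic rationals'' is true but does not control the rate at which $\inf_{q \in D_\ell}\absval{x - q}$ decays in $\ell$, and for points that are \emph{well} approximated by dyadics this decay can be arbitrarily fast. Concretely, take $x$ with $\dyadicirratexp{x} > 4$ (say a dyadic Liouville-type number). Then there are infinitely many levels $\ell$ and points $q' \in D_\ell$ with $\absval{x - q'}$ much smaller than the truncation radius $\leq 2^{-4\ell+3}$ of the singularity at $q'$. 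At radii $r$ comparable to that truncation radius, $\cball{x}{r}$ captures essentially the whole singularity at $q'$, so $\crcdf{x}{r} \in \Theta(\sqrt{r})$ along that subsequence of radii — not $O(r^2)$, and not even $o(\sqrt{r})$. Your argument would therefore fail to show $q \not\preceq_0 x$ for such $x$.

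This is precisely the case the paper treats with the dyadic irrationality exponent $\dyadicirratexp{x}$: when $\dyadicirratexp{x} < 4$, one does get $\crcdf{x}{r} \in o(\sqrt{r})$ (though only $O(r^{\min\{1,\nicefrac{2}{\beta}\}})$, not $O(r^2)$, because the finite low levels already contribute an $O(r)$ term); when $\dyadicirratexp{x} > 4$, the RCDF at $x$ tracks the RCDF at an approximating dyadic $q_{\ell_k}$ so closely that $x$ becomes \emph{incomparable} to $q$ (\Cref{lem:rcdf_behaviour_countable_dense_antichain}\ref{item:rcdf_behaviour_incomp}) — which still yields $q \not\preceq_0 x$, but by a completely different mechanism than the one you propose; and the critical case $\dyadicirratexp{x} = 4$ needs its own argument (\Cref{lem:rcdf_behaviour_countable_dense_antichain}\ref{item:rcdf_behaviour_dyadic_rationals_not_dominated}). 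Without this trichotomy your proof of (c) does not go through.
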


\begin{proof}
	\begin{enumerate}[label=(\alph*)]
		\item 
		By construction, each density in level $\ell$ has mass $\massof{\ell} = 2^{-2\ell + 1}$, and there are $2^{\ell - 1}$ densities, giving a total mass of $2^{-\ell}$. 
		It follows that $\mu$ is a probability measure as $\int_{\Reals} \rho(x) \,\rd x = \sum_{\ell \in \Naturals} 2^{-\ell} = 1$.

		\item
		Take distinct elements $q_{\ell,i},q_{\ell', i'} \in D$. 
		It is sufficient to check that $q_{\ell,i} \not \preceq_0 q_{\ell',i'}$, as one can swap $q_{\ell,i}$ and $q_{\ell', i'}$ to obtain that $q_{\ell,i} \incomp_0 q_{\ell',i'}$. 
		Asymptotically, $\crcdf{q_{\ell,i}}{r} \sim \mu_{\indexof{\ell}{i}, \massof{\ell}}(\cball{0}{r})$ as $r \to 0$, and likewise $\crcdf{q_{\ell',i'}}{r} \sim \mu_{\indexof{\ell'}{i'},\massof{\ell'}}(\cball{0}{r})$ (\Cref{lem:rcdf_behaviour_countable_dense_antichain}\ref{item:rcdf_behaviour_dyadic_rationals}).
		Using the identity $\limsup_{r \to 0} f(r) g(r) = \limsup_{r \to 0} f(r) \lim_{r \to 0} g(r)$, we obtain that 
		\begin{align*}
			& \limsup_{r \to 0} \frac{\crcdf{q_{\ell,i}}{r}}{\crcdf{q_{\ell',i'}}{r}} \\
			& \quad = \limsup_{r \to 0} \frac{\mu_{\indexof{\ell}{i},\massof{\ell}}(\cball{0}{r})}{\mu_{\indexof{\ell'}{i'}, \massof{\ell'}}(\cball{0}{r})}
			\lim_{r \to 0} \frac{\crcdf{q_{\ell,i}}{r}}{\mu_{\indexof{\ell}{i},\massof{\ell}}(\cball{0}{r})}
			\frac{\mu_{\indexof{\ell'}{i'}, \massof{\ell'}}(\cball{0}{r})}{\crcdf{q_{\ell',i'}}{r}} \\
			&\quad = \limsup_{r \to 0} \frac{\mu_{\indexof{\ell}{i}, \massof{\ell}}(\cball{0}{r})}{\mu_{\indexof{\ell'}{i'}, \massof{\ell'}}(\cball{0}{r})} > 1	
		\end{align*}
		where the final line follows by the construction of the oscillatory RCDFs in \Cref{prop:rcdf_family}\ref{item:rcdf_family_3} as $\indexof{\ell}{i}$ and $\indexof{\ell'}{i'}$ are distinct primes.
		This proves that $q_{\ell, i} \not \preceq_0 q_{\ell',i'}$ as claimed, from which incomparability follows.
		
		\item
		To show that $q \in D$ is maximal, it suffices to check that $q \not \preceq_0 x$ for any $x \in [0, 1] \setminus D$; 
		part \ref{item:countable_dense_antichain_incomp} proves that $q \not \preceq_0 x$ when $x \in D$.
		To prove this, we must characterise the behaviour of the RCDF $\crcdf{x}{r}$; this depends on the properties of the binary representation of $x$ and in particular on a quantity we call the \defterm{dyadic irrationality exponent} $\dyadicirratexp{x} \in [1, \infty)$ (\Cref{defn:dyadic_irrationality_exponent}).
		If $\dyadicirratexp{x} < 4$, then $\crcdf{x}{r} \in o(r^{\nicefrac{1}{2}})$ (\Cref{lem:rcdf_behaviour_countable_dense_antichain}\ref{item:rcdf_behaviour_badly_approximated}); as $\crcdf{q}{r} \in \Theta(r^{\nicefrac{1}{2}})$ by the construction of the density centred at $q$, it follows that $q \not \preceq_0 x$ because
		\begin{equation*}
			\limsup_{r \to 0} \frac{\crcdf{x}{r}}{\crcdf{q}{r}} = 0.
		\end{equation*}
		If $\dyadicirratexp{x} > 4$, then $x$ is approximated particularly well by a sequence of dyadic rationals, so there exists a sequence of scales as $r \to 0$ such that the RCDF $\crcdf{x}{r}$ behaves much like its approximating dyadic rational. 
		In fact, this approximation is so good that $q \incomp_0 x$ (\Cref{lem:rcdf_behaviour_countable_dense_antichain}\ref{item:rcdf_behaviour_incomp}) for the same reason that two dyadic rationals are incomparable.
		In the critical case $\dyadicirratexp{x} = 4$, there exist examples with $q \incomp_0 x$ and examples where $x \prec_0 q$, but in either case we can still verify that $q \not \preceq_0 x$ (\Cref{lem:rcdf_behaviour_countable_dense_antichain}\ref{item:rcdf_behaviour_dyadic_rationals_not_dominated}) as required.
		This proves that no $x \in [0, 1] \setminus D$ can dominate any $q \in D$, completing the proof.
		\qedhere
	\end{enumerate}
\end{proof}

\begin{remark}
	\label{rem:countable_dense_antichain}
	\begin{enumerate}[label=(\alph*)]
		\item
		The proof shows that the dyadic rationals do not form a maximal antichain in the sense of setwise inclusion: points with $\dyadicirratexp{x} > 4$ are also incomparable with the dyadic rationals;
		thus, the cardinality of a maximal antichain is at least $\aleph_0$.
		On the other hand, the Lebesgue differentiation theorem implies that any antichain has Lebesgue measure zero (see also \Cref{prop:essentially_total_examples}\ref{item:essentially_total_examples_lebesgue}), so one cannot expect to find a larger antichain in a measure-theoretic sense.

		\item 
		Our construction is not limited to this specific dense set and enumeration, or even to absolutely continuous measures on the real line;
		for example, one can reweight a Gaussian measure on a separable Hilbert space $H$ to have a similar RCDF to our prototypical measures $\mu_{k, m}$ at the point $0$, then place such measures at points in a dense subset of $H$.
		Another possibility is to argue as in \Cref{thm:countable_dense_antichain} using $\Rationals \cap [0, 1]$ as the dense set; 
		the behaviour then depends on the usual number-theoretic irrationality exponent\footnote{For further details about the irrationality exponent, traditionally denoted $\mu(x)$, see e.g.\ \citet{FeldmanNesterenko1998}.} instead of the dyadic irrationality exponent $\dyadicirratexp{x}$, but one still obtains a dense antichain containing all rationals in $[0, 1]$.
		Some of the technical steps are described in more detail in \citet[Section~7.3 and Appendix~A]{Lambley2022}.
	\end{enumerate}
\end{remark}

\begin{figure}[t]
	\centering
	\begin{subfigure}[t]{0.49\textwidth}
		\centering
		\includegraphics[width=\linewidth]{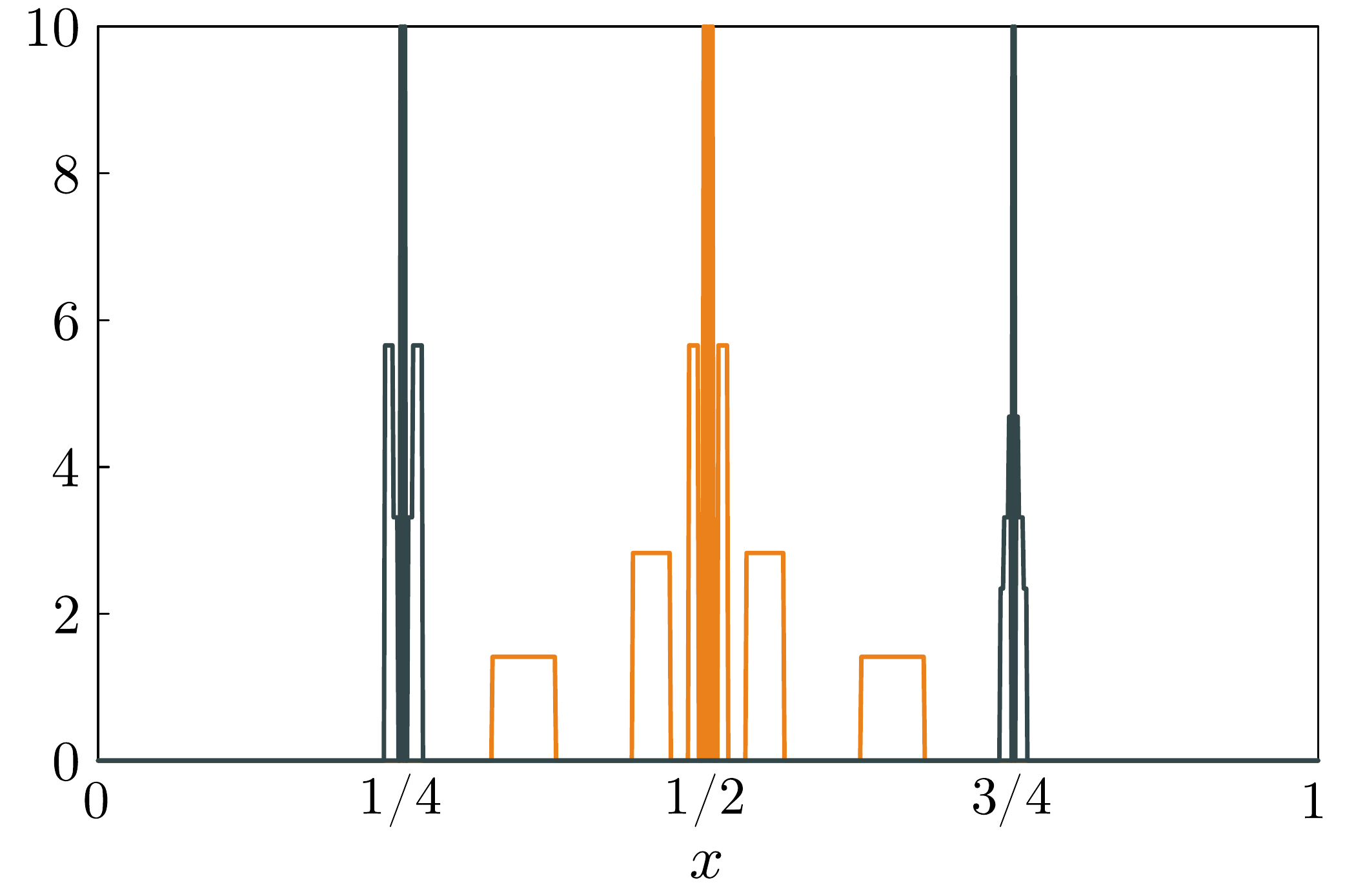}
		\subcaption{\raggedright The density $\rho$ is constructed as a sum of the prototype densities $\rho_{k,m}$. The orange density is $\rho_{2, 2^{-1}}(\quark - \nicefrac{1}{2})$, and the grey densities are $\rho_{3, 2^{-3}}(\quark - \nicefrac{1}{4})$ and $\rho_{5, 2^{-3}}(\quark - \nicefrac{3}{4})$.}%
	\end{subfigure}
	\begin{subfigure}[t]{0.49\textwidth}
		\centering
		\includegraphics[width=\linewidth]{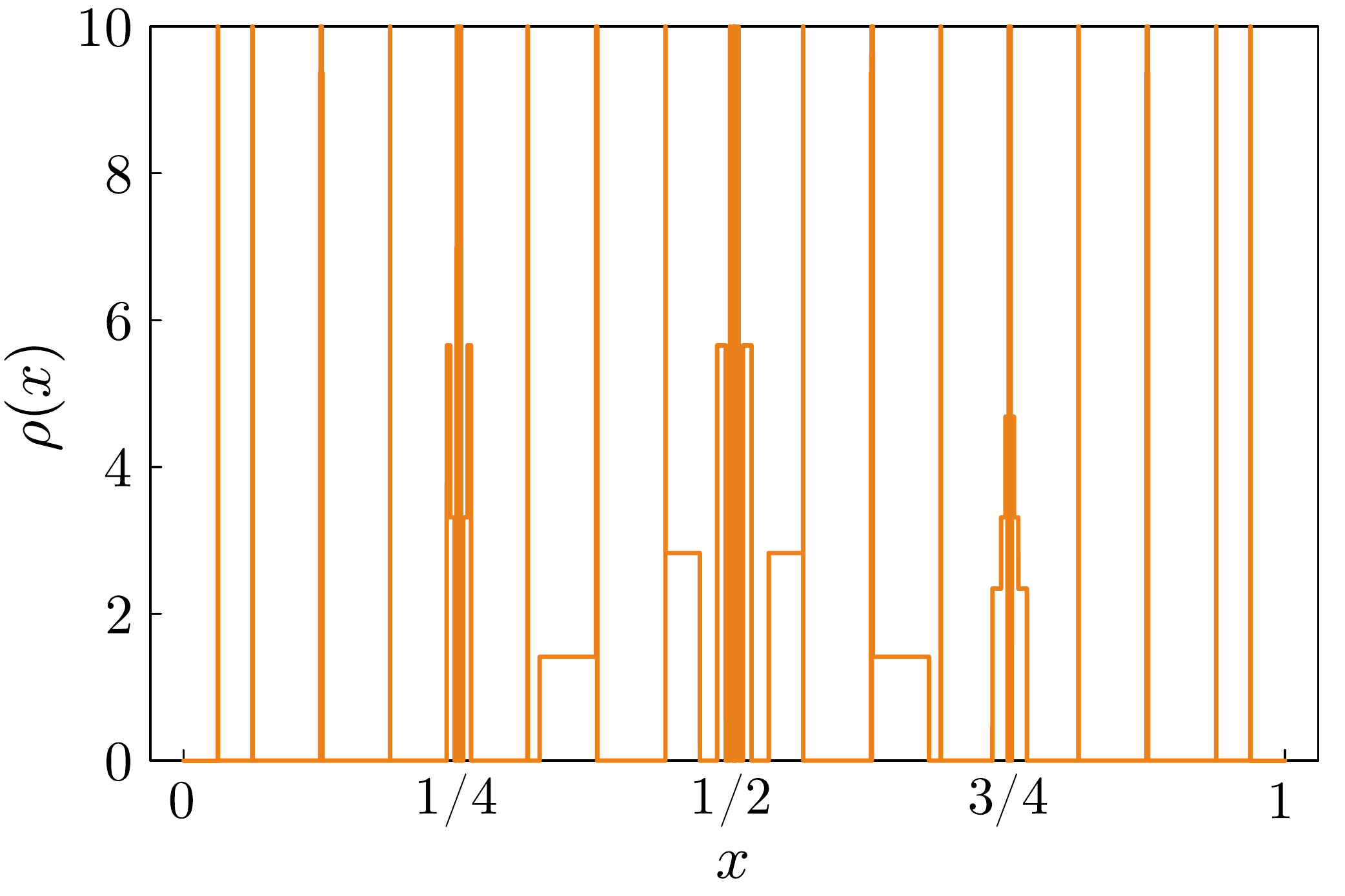}
		\subcaption{\raggedright Approximation of the density, truncated at the fifth level (i.e.\ with the densities centred at all dyadic rationals of the form $c2^{-n}$ with $n \leq 5$).}%

	\end{subfigure}
	\caption{The density $\rho$ from \Cref{thm:countable_dense_antichain} for which the dyadic rationals are an antichain.}
	\label{fig:countable_antichain}
\end{figure}

\subsection{Essential totality}
\label{sec:essential_totality}

The need for a $\preceq_0$-greatest element to be globally comparable is a non-trivial one, and it can fail rather dramatically, e.g.\ when the maximal elements form a dense antichain as in \Cref{thm:countable_dense_antichain}.
Such examples could be criticised as somewhat artificial, but we feel that they highlight the importance of checking for incomparability and developing technical conditions on the measure which prevent it.

One could rule out incomparability if $\preceq_0$ were total, but this is not true in general, and checking this condition is often difficult in practice.
We propose a somewhat weaker condition, where one can tolerate incomparability away from the ``top'' of the preorder, as long as any candidate for a maximal element is also globally comparable.

Our condition of \defterm{essential totality} can be interpreted as an order-theoretic generalisation of the $M$-property of \citet{AyanbayevKlebanovLieSullivan2022_I};
recall \eqref{eq:property_M}.
A motivating example is that of a Gaussian measure $\mu$ on an infinite-dimensional space $X$:
the Cameron--Martin space $H(\mu)$ is an essentially total subspace where a maximal element must lie, and any element of the Cameron--Martin space is globally comparable using the OM functional and property $M(\mu, H(\mu))$.

\begin{definition}
	\label{defn:essentially_total}
	Let $X$ be a metric space and let $\mu \in \prob{X}$.
	A non-empty subset $E \subseteq X$ is \defterm{$\mu$-essentially total} if:
	\begin{enumerate}[label=(\alph*)]
		\item
		\label{item:essentially_total_1}
		any two elements of $E$ are comparable (i.e.\ $E$ is a $\preceq_0$-chain);
		\item
		\label{item:essentially_total_2}
		for any $x \in E$ and $x' \in X \setminus E$, $x' \preceq_{0} x$; and
		\item
		\label{item:essentially_total_3}
		for any $x' \in X \setminus E$, there exists $x \in E$ such that $x' \prec_{0} x$.
	\end{enumerate}
\end{definition}

Condition \ref{item:essentially_total_2} says that if $x^{\star} \in E$ is an upper bound on $E$, then it is $\preceq_0$-greatest;
\ref{item:essentially_total_3} says that no element in $X \setminus E$ can be greatest.
We emphasise, though, that there is no need for $E$ to be a large set in any measure-theoretic or topological sense.

\begin{proposition}[Examples of essentially total subsets]
	\label{prop:essentially_total_examples}
	\begin{enumerate}[label=(\alph*)]
		\item
		\label{item:essentially_total_examples_lebesgue}
		Suppose that $X \subseteq \Reals^n$ is open and that $\mu \in \prob{X}$ has continuous density $\rho \colon X \to [0, \infty)$ with respect to $\lambda^{n}$.
		Then $E \defeq \set{x \in X}{\rho(x) > 0}$ is $\mu$-essentially total, and $I_{\mu}(x) \defeq -\log \rho(x)$ is an OM functional with domain $E$.

		\item
		\label{item:essentially_total_examples_om_and_m_prop}
		Suppose that $\mu \in \prob{X}$ has an OM functional $I_{\mu} \colon E \to \Reals$ and property $M(\mu, E)$ holds.
		Then $E$ is $\mu$-essentially total.

		\item
		\label{item:essentially_total_examples_reweighted}
		Suppose more generally that $\mu_0 \in \prob{X}$ has an OM functional $I_{\mu_0} \colon E \to \Reals$ and property $M(\mu_0, E)$ holds, and that $\mu \in \prob{X}$ has Radon--Nikodym derivative
		\begin{equation*}
			\frac{\rd \mu}{\rd \mu_0}(x) \propto \exp\bigl(-\Phi(x)\bigr)
		\end{equation*}
		for some locally uniformly continuous \defterm{potential} $\Phi \colon X \to \Reals$.
		Then $E$ is $\mu$-essentially total, and $I_{\mu}(x) \defeq I_{\mu_0}(x) + \Phi(x)$ is an OM functional for $\mu$.
	\end{enumerate}
\end{proposition}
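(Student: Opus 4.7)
For part (a), I would first invoke the Lebesgue differentiation theorem (or equivalently use continuity of $\rho$ with straightforward estimates via its modulus of continuity on $\cball{x}{r}$) to obtain the pointwise limit
\[
\lim_{r \to 0} \frac{\crcdf{x}{r}}{\lambda^{n}(\cball{x}{r})} = \rho(x) \quad \text{for every } x \in X.
\]
Dividing for $x, x' \in E$ yields $\lim_{r \to 0} \crcdf{x}{r}/\crcdf{x'}{r} = \rho(x)/\rho(x') \in (0, \infty)$, which makes $E$ into a $\preceq_{0}$-chain and identifies $I_{\mu} \defeq -\log \rho$ as an OM functional satisfying \eqref{eq:Onsager--Machlup}. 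For $x \in E$ and $x' \in X \setminus E$: if $x' \in \supp(\mu)$, the same limit gives $\crcdf{x'}{r}/\crcdf{x}{r} \to 0$ and hence $x' \prec_{0} x$; if $x' \notin \supp(\mu)$, then $x' \prec_{0} x$ by the exceptional clauses of \Cref{defn:analytic_small-radius_preorder}. All three conditions of \Cref{defn:essentially_total} then follow.

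For part (b), the defining relation \eqref{eq:Onsager--Machlup} of the OM functional immediately gives, for every $x, x' \in E$,
\[
\lim_{r \to 0} \frac{\crcdf{x}{r}}{\crcdf{x'}{r}} = \exp\bigl(I_{\mu}(x') - I_{\mu}(x)\bigr) \in (0, \infty),
\]
so $E$ is a $\preceq_{0}$-chain, verifying \Cref{defn:essentially_total}\ref{item:essentially_total_1}. Fix the witness $x_{0} \in E$ of property $M(\mu, E)$, so that $\crcdf{y}{r}/\crcdf{x_{0}}{r} \to 0$ for every $y \in X \setminus E$. For any such $y$ and any $x \in E$, the factorisation
\[
\frac{\crcdf{y}{r}}{\crcdf{x}{r}} = \frac{\crcdf{y}{r}}{\crcdf{x_{0}}{r}} \cdot \frac{\crcdf{x_{0}}{r}}{\crcdf{x}{r}}
\]
expresses the ratio as a product of a factor tending to $0$ and a factor tending to a finite positive limit, so $y \prec_{0} x$ (with the case $y \notin \supp(\mu)$ absorbed by the exceptional clauses). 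This yields \ref{item:essentially_total_2} and \ref{item:essentially_total_3}.

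For part (c), the key estimate is a sandwich bound from local uniform continuity of $\Phi$. For each $x \in X$, choose a neighbourhood on which $\Phi$ is uniformly continuous, giving a modulus $\omega_{x}(r) \to 0$ such that $\absval{\Phi(z) - \Phi(x)} \leq \omega_{x}(r)$ whenever $z \in \cball{x}{r}$ and $r$ is sufficiently small. Writing $Z$ for the normaliser of $\mu$,
\[
e^{-\Phi(x) - \omega_{x}(r)} \, \mu_{0}(\cball{x}{r}) \leq Z \, \crcdf{x}{r} \leq e^{-\Phi(x) + \omega_{x}(r)} \, \mu_{0}(\cball{x}{r}).
\]
Applied to $x, x' \in E$ and combined with the OM relation for $\mu_{0}$,
\[
\lim_{r \to 0} \frac{\crcdf{x}{r}}{\crcdf{x'}{r}} = e^{\Phi(x') - \Phi(x)} \lim_{r \to 0} \frac{\mu_{0}(\cball{x}{r})}{\mu_{0}(\cball{x'}{r})} = \exp\bigl( (I_{\mu_{0}}(x') + \Phi(x')) - (I_{\mu_{0}}(x) + \Phi(x)) \bigr),
\]
identifying $I_{\mu} \defeq I_{\mu_{0}} + \Phi$ as an OM functional for $\mu$ on $E$. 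The same sandwich applied with $y \in X \setminus E$ and the distinguished $x_{0} \in E$ from property $M(\mu_{0}, E)$ shows that $\crcdf{y}{r}/\crcdf{x_{0}}{r} \to 0$ under $\mu$, i.e., $\mu$ inherits property $M(\mu, E)$. Essential totality of $E$ with respect to $\mu$ now follows from part (b).

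The main technical subtlety is controlling ratios of quantities that both tend to zero; this is precisely why the hypotheses (continuity of the density, existence of an OM functional, local uniform continuity of $\Phi$) are needed to pin down the rates. In (c), the fact that $\Phi$ is only \emph{locally} uniformly continuous is harmless because the OM and property-$M$ arguments involve only finitely many reference points at a time, so one may work inside a single ball on which $\Phi$ is uniformly continuous. A secondary care-point is consistent handling of the case $y \notin \supp(\mu)$, which in each of (a)--(c) is trivially absorbed by the exceptional clauses of \Cref{defn:analytic_small-radius_preorder}.
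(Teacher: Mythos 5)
Your proof is correct and follows essentially the same strategy as the paper. Part (a) matches the paper's argument via the Lebesgue differentiation theorem almost verbatim, though the paper is slightly more careful about one small point you gloss over: when dividing the two differentiated limits, one must first choose $r$ small enough that both $\cball{x}{r}$ and $\cball{x'}{r}$ lie entirely inside the open set $X$, so that $\lambda^{n}(\cball{x}{r}) = \lambda^{n}(\cball{x'}{r})$ and the Lebesgue-measure factors cancel (otherwise the metric ball in $X$ is a truncated Euclidean ball and the cancellation is not automatic). For parts (b) and (c), your approach is the same in substance but more self-contained: the paper simply cites Lemma~B.1 of \citet{AyanbayevKlebanovLieSullivan2022_I} for the step that property $M(\mu,E)$ witnessed at a single $x_0 \in E$ propagates to all of $E$, and Lemma~B.8 of the same reference for the preservation of the OM functional and property $M$ under the reweighting $\rd\mu/\rd\mu_0 \propto \exp(-\Phi)$, whereas you reconstruct both steps explicitly via the factorisation $\crcdf{y}{r}/\crcdf{x}{r} = (\crcdf{y}{r}/\crcdf{x_0}{r}) \cdot (\crcdf{x_0}{r}/\crcdf{x}{r})$ and the sandwich bound from the local modulus of uniform continuity of $\Phi$. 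Both routes are sound; yours avoids external citations at the cost of a longer write-up, while the paper's buys brevity by leaning on the cited lemmas.
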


\begin{proof}
	\begin{enumerate}[label=(\alph*)]
		\item
		The Lebesgue differentiation theorem implies that for any $x \in X$,
		\begin{equation*}
			\lim_{r \to 0} \frac{\crcdf{x}{r}}{\lambda^{n}(\cball{x}{r})} = \rho(x).
		\end{equation*}
		For any $x$ and $x' \in E$, one can pick $r$ sufficiently small such that $\cball{x}{r}$ and $\cball{x'}{r}$ lie in the open set $X$.
		This implies that $\lambda^{n}(\cball{x}{r}) = \lambda^{n}(\cball{x'}{r})$, and so
		\begin{equation}
			\label{eq:lebesgue_differentiation}
			\lim_{r \to 0} \frac{\crcdf{x}{r}}{\crcdf{x'}{r}} = \lim_{r \to 0} \frac{\crcdf{x}{r}}{\lambda^{n}(\cball{x}{r})} \lim_{r \to 0} \frac{\lambda^{n}(\cball{x'}{r})}{\crcdf{x'}{r}} = \frac{\rho(x)}{\rho(x')}.
		\end{equation}
		Hence, $E$ is a chain and $I_{\mu}$ is an OM functional on $E$.
		When $x' \in X \setminus E$, one can still apply the Lebesgue differentiation theorem to obtain
		\begin{equation*}
			\lim_{r \to 0} \frac{\crcdf{x'}{r}}{\lambda^{n}(\cball{x'}{r})} = 0,
		\end{equation*}
		so an argument similar to that in \eqref{eq:lebesgue_differentiation} proves that $x' \prec_0 x$ for any $x \in E$.
		\item
		The existence of an OM functional $I_{\mu}$ proves that $E$ is a chain.
		Using the $M$-property and \citet[Lemma~B.1]{AyanbayevKlebanovLieSullivan2022_I}, for $x' \in X \setminus E$ and $x \in E$, we must have $x' \prec_0 x$, because
		\begin{equation*}
			\lim_{r \to 0} \frac{\crcdf{x'}{r}}{\crcdf{x}{r}} = 0.
		\end{equation*}

		\item By \citet[Lemma~B.8]{AyanbayevKlebanovLieSullivan2022_I}, $I_{\mu}$ is an OM functional for $\mu$ and property $M(\mu, E)$ holds.
		The result follows by \ref{item:essentially_total_examples_om_and_m_prop}.
		\qedhere
	\end{enumerate}
\end{proof}

\begin{proposition}
	\label{prop:essentially_total_properties}
	Let $X$ be a metric space and let $\mu \in \prob{X}$.
	Suppose that $\varnothing \neq E \subseteq X$ is $\mu$-essentially total.
	\begin{enumerate}[label=(\alph*)]
		\item
		\label{item:essentially_total_properties_maximal_greatest_in_E}
		Any $\preceq_0$-maximal element must lie in $E$ and is $\preceq_0$-greatest.
		\item
		\label{item:essentially_total_properties_variational}
		If $\mu$ admits an OM functional $I_{\mu} \colon E \to \Reals$, then
		\begin{equation*}
			x^{\star} \text{ is $\preceq_0$-greatest} \iff x^{\star} \in E \text{ and } x^{\star} \text{ minimises } I_{\mu}.
		\end{equation*}
	\end{enumerate}
\end{proposition}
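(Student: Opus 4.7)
The plan is to unwind the definitions of essential totality, maximal/greatest element, and OM functional, handling the two parts almost mechanically.

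\textbf{Part (a).} Suppose $x^{\star}$ is $\preceq_0$-maximal. First I would show $x^{\star} \in E$ by contradiction: if $x^{\star} \in X \setminus E$, then condition \ref{item:essentially_total_3} of \Cref{defn:essentially_total} furnishes some $x \in E$ with $x^{\star} \prec_0 x$, violating maximality. Next, to upgrade maximality to greatest, I would fix an arbitrary $x' \in X$ and split into cases. If $x' \in E$, condition \ref{item:essentially_total_1} gives comparability of $x^{\star}$ and $x'$; if $x^{\star} \preceq_0 x'$, maximality forces $x^{\star} \asymp_0 x'$, and in either subcase $x' \preceq_0 x^{\star}$. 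If $x' \in X \setminus E$, then condition \ref{item:essentially_total_2} immediately yields $x' \preceq_0 x^{\star}$. Hence $x^{\star}$ dominates every element of $X$, so it is greatest.

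\textbf{Part (b), forward direction.} Assume $x^{\star}$ is $\preceq_0$-greatest. Since greatest elements are maximal, part \ref{item:essentially_total_properties_maximal_greatest_in_E} gives $x^{\star} \in E$. For any $x \in E$, the OM functional definition \eqref{eq:Onsager--Machlup} ensures that the ratio $\crcdf{x}{r}/\crcdf{x^{\star}}{r}$ has a genuine limit (not merely a $\limsup$) equal to $\exp(I_{\mu}(x^{\star}) - I_{\mu}(x))$. The relation $x \preceq_0 x^{\star}$ combined with the existence of the limit yields $\exp(I_{\mu}(x^{\star}) - I_{\mu}(x)) \leq 1$, i.e.\ $I_{\mu}(x^{\star}) \leq I_{\mu}(x)$. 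Thus $x^{\star}$ minimises $I_{\mu}$ on $E$.

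\textbf{Part (b), reverse direction.} Conversely, suppose $x^{\star} \in E$ minimises $I_{\mu}$. For $x \in E$, the OM identity gives
\begin{equation*}
	\lim_{r \to 0} \frac{\crcdf{x}{r}}{\crcdf{x^{\star}}{r}} = \exp\bigl(I_{\mu}(x^{\star}) - I_{\mu}(x)\bigr) \leq 1,
\end{equation*}
so $x \preceq_0 x^{\star}$. For $x \in X \setminus E$, condition \ref{item:essentially_total_2} of \Cref{defn:essentially_total} gives $x \preceq_0 x^{\star}$ directly. So $x^{\star}$ dominates every element of $X$ and is greatest.

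The proof is essentially bookkeeping, and I do not anticipate any serious obstacle; the only subtlety is recognising that the OM functional produces a two-sided limit (not just a $\limsup$), which is exactly what is needed to convert $\preceq_0$-inequalities into inequalities between the values of $I_{\mu}$ in both directions.
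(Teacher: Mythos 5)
Your proposal is correct and follows essentially the same approach as the paper's proof: part (a) uses condition (c) of essential totality to locate $x^{\star}$ in $E$, then conditions (a) and (b) to establish global comparability (the paper compresses this last step by invoking \Cref{lem:GWM_greatest_maximal}, while you verify domination directly, but the content is identical); part (b) in both cases hinges on the key observation that the OM functional identity \eqref{eq:Onsager--Machlup} produces a genuine two-sided limit for $x, x^{\star} \in E$, which converts the $\preceq_0$-relation into the inequality $I_{\mu}(x^{\star}) \leq I_{\mu}(x)$ and back.
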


\begin{proof}
	\begin{enumerate}[label=(\alph*)]
		\item
		A maximal element $x^{\star}$ must lie in $E$, or else one could find $x \in E$ such that $x^{\star} \prec_0 x$ by essential totality, contradicting the maximality of $x^{\star}$.
		Conditions \ref{item:essentially_total_1} and \ref{item:essentially_total_2} of essential totality together imply that $x^{\star}$ is globally comparable, so it must be greatest (\Cref{lem:GWM_greatest_maximal}).

		\item
		Using the OM functional for $E$, one finds that
		\begin{align*}
			x^{\star} \in E \text{ is an upper bound for } E &\iff
			\lim_{r \to 0} \frac{\crcdf{x}{r}}{\crcdf{x^{\star}}{r}} \leq 1 \text{ for all } x \in E \\
			&\iff
			\frac{e^{-I_{\mu}(x)}}{e^{-I_{\mu}(x^{\star})}} \leq 1 \text{ for all } x \in E  \\
			&\iff
			x^{\star} \text{ minimises } I_{\mu}.
		\end{align*}
		If $x^{\star}$ is $\preceq_0$-greatest, then $x^{\star} \in E$ by \ref{item:essentially_total_properties_maximal_greatest_in_E}, and the previous implications prove that $x^{\star}$ minimises $I_{\mu}$.
		Conversely, the definition of essential totality ensures that an upper bound for $E$ is $\preceq_0$-greatest, proving the reverse implication.
		\qedhere
	\end{enumerate}
\end{proof}

The variational characterisation of weak modes as minimisers of the OM functional generalises the result of \citet[Proposition~4.1]{AyanbayevKlebanovLieSullivan2022_I} to essentially total subsets.
Specialising to the case of a continuous Lebesgue density on an open set (\Cref{prop:essentially_total_examples}\ref{item:essentially_total_examples_lebesgue}) recovers the intuitive result that $x^{\star}$ is a weak mode if and only if it is a global maximiser of $\rho$.
The situation is more subtle if $X$ is not open:
the measure in \Cref{eg:upward_closures_wrt_preceq_0}\ref{item:upward_closures_wrt_preceq_0_1} restricted to $X = [0, 1]$ has a continuous Lebesgue density maximised at $x^{\star} = 1$, but $x^{\star}$ is not a weak mode.

As a consequence of our result on reweightings of well-behaved measures (\Cref{prop:essentially_total_examples}\ref{item:essentially_total_examples_reweighted}), we obtain the significant corollary that maximal elements are always greatest when the measure is a Bayesian posterior as in \eqref{eq:Bayes} arising from a Gaussian prior.
This is highly reassuring from the perspective of applications: pathological examples in the style of \Cref{thm:countable_dense_antichain} with non-greatest maximal elements do not occur in Bayesian posteriors for well-behaved inverse problems.

\section{Closing remarks}
\label{sec:conclusion}

This article has proposed that modes of probability measures should be understood as greatest or maximal elements of preorders that are defined using the masses of metric balls.

At fixed radius $r > 0$, there is an obvious choice of total preorder, and the order-theoretic point of view opens up attractive proof techniques for the existence of maximal/greatest elements (radius-$r$ modes) (\Cref{thm:r_greatest}).
However, we have also seen that such radius-$r$ modes can fail to exist (\Cref{eg:no_radius_1_mode,eg:no_radius_r_mode}), which provides further justification for the use of asymptotic maximising families as proposed by \citet{KlebanovWacker2022}, and we are able to contribute to the convergence analysis of such families as $r \to 0$ (\Cref{thm:limits_of_r-modes,thm:AMFs_and_strong_modes}).

In the limit as $r \to 0$, there are several limiting preorders that one could consider.
The one on which we have focussed, whose greatest elements are weak modes, is a non-total preorder.
Indeed, we have shown that even absolutely continuous measures can admit topologically dense antichains (\Cref{thm:countable_dense_antichain}), indicating that a measure must satisfy stringent regularity conditions to be certain of having greatest elements, i.e.\ weak modes.

As remarked in the introduction, we hope that this article will stimulate further discussion in the community about the ``correct'' definition of a mode.
We argue that there is a tension between the order-theoretic desire for modes to be merely \emph{maximal} elements of some preorder and an application-driven desire for modes to be \emph{greatest} elements.
To some extent, this tension can be avoided if one works only with particularly nice measures that display no oscillatory properties or that satisfy criteria such as essential totality, thus keeping all pathologies away from the ``top'' of the preorder.

Further useful new definitions of modes may be introduced and one would hope that they correspond to preorders.
However, as explored in \Cref{sec:alternative_small-radius_preorders}, it may well be that such definitions only induce non-transitive \emph{relations}.
In such cases, the loss of transitivity is not necessarily fatal, so long as it is kept away from the ``top'' of the relation, so that maximal/greatest elements may be defined.

On a high level, it would be interesting to know whether or not there can exist a function assigning to every (sufficiently well-behaved) measure $\mu \in \prob{X}$ a total preorder $\preceq^{\mu}$ whose maximal or greatest elements are useful modes for $\mu$.
This would appear to be a major open question that will involve much further investigation.

\appendix

\section{Technical supporting results}
\label{sec:technical}

\subsection{Radial cumulative distribution functions}

\begin{lemma}[Properties of RCDFs]
	\label{lem:RCDF}
	Let $X$ be a metric space and let $\mu \in \prob{X}$.
	\begin{enumerate}[label=(\alph*)]
		\item \label{lem:RCDF_in_x}
		For each $r > 0$, $x \mapsto \crcdf{x}{r}$ is upper semicontinuous.
		\item \label{lem:RCDF_in_r}
		For each $x \in X$, $r \mapsto \crcdf{x}{r}$ is monotonically increasing, is continuous from the right, has limits from the left, and is upper semicontinuous.
		Furthermore, $r \mapsto \crcdf{x}{r}$ is differentiable $\lambda^{1}$-a.e.
	\end{enumerate}
\end{lemma}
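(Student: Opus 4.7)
For part \ref{lem:RCDF_in_x}, the plan is to show that if $x_n \to x$ in $X$, then $\limsup_{n \to \infty} \mu(\cball{x_n}{r}) \leq \mu(\cball{x}{r})$. The key pointwise observation is that for any $y \in X$ with $d(y, x) > r$, the triangle inequality $d(y, x_n) \geq d(y, x) - d(x, x_n)$ forces $y \notin \cball{x_n}{r}$ for all sufficiently large $n$; whereas for $y \in \cball{x}{r}$ the bound $\one_{\cball{x_n}{r}}(y) \leq 1 = \one_{\cball{x}{r}}(y)$ is automatic. Together, these give the pointwise inequality $\limsup_n \one_{\cball{x_n}{r}}(y) \leq \one_{\cball{x}{r}}(y)$. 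Since $\mu$ is a finite measure and the indicators are uniformly bounded by $1 \in L^1(\mu)$, the reverse Fatou lemma yields
\begin{equation*}
    \limsup_{n \to \infty} \mu(\cball{x_n}{r})
    = \limsup_{n \to \infty} \int_X \one_{\cball{x_n}{r}}(y) \, \mu(\rd y)
    \leq \int_X \limsup_{n \to \infty} \one_{\cball{x_n}{r}}(y) \, \mu(\rd y)
    \leq \mu(\cball{x}{r}),
\end{equation*}
establishing upper semicontinuity. The closedness of balls (built into the paper's convention that $\cball{x}{r}$ denotes the \emph{closed} ball) is what makes this work; using open balls one would only obtain lower semicontinuity.

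For part \ref{lem:RCDF_in_r}, monotonicity is immediate from $\cball{x}{r} \subseteq \cball{x}{s}$ whenever $r \leq s$. For right-continuity, fix $r \geq 0$ and a decreasing sequence $r_n \searrow r$; then $\cball{x}{r_n} \searrow \cball{x}{r}$ because $\bigcap_n \{ y : d(y, x) \leq r_n \} = \{ y : d(y, x) \leq r \}$, so continuity of $\mu$ from above (applicable since $\mu$ is finite) gives $\crcdf{x}{r_n} \to \crcdf{x}{r}$. Left-hand limits exist because any bounded monotone function has one-sided limits everywhere; and since $\crcdf{x}{\quark}$ is increasing, its upper semicontinuity is equivalent to right-continuity (the $\limsup$ from either side of $r$ for an increasing function is the right-hand limit). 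Finally, differentiability $\lambda^{1}$-almost everywhere is an immediate instance of Lebesgue's theorem that every monotone real-valued function is differentiable almost everywhere.

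The only slightly delicate step is the pointwise limsup comparison in part \ref{lem:RCDF_in_x}; everything else is bookkeeping with standard measure- and order-theoretic facts, so I expect no serious obstacle to carrying this out.
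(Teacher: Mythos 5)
Your proposal is correct. Part~\ref{lem:RCDF_in_r} follows the paper's argument essentially verbatim (monotonicity, then set-theoretic continuity of $\mu$ from above/below to get right-continuity and existence of left limits, then Lebesgue's monotone differentiation theorem); your observation that upper semicontinuity of an increasing function reduces to right-continuity is a valid small shortcut, where the paper instead notes directly that $\limsup_{n} \crcdf{x}{r_n} \in \{\mu(\oball{x}{r}), \crcdf{x}{r}\}$ for any $r_n \to r$.

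Part~\ref{lem:RCDF_in_x} takes a genuinely different route. The paper sandwiches $\cball{x_n}{r} \subseteq \cball{x}{r + d(x,x_n)}$ and then shrinks the outer ball, using continuity of $\mu$ along the decreasing family $\cball{x}{r + d(x,x_n)} \searrow \cball{x}{r}$ to conclude $\limsup_n \crcdf{x_n}{r} \leq \lim_n \crcdf{x}{r + d(x,x_n)} = \crcdf{x}{r}$. You instead prove the pointwise inequality $\limsup_n \one_{\cball{x_n}{r}} \leq \one_{\cball{x}{r}}$ via the triangle inequality (the closed-ball convention being exactly what makes this work), and then invoke the reverse Fatou lemma with the constant $1$ as the dominating function. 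Both are clean; the paper's argument is slightly more elementary in that it uses only monotone continuity of measures rather than an integral-limit theorem, while your version isolates the pointwise statement in a way that some readers may find more transparent and that generalises directly to more exotic families of closed sets. Either would be acceptable here.
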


\begin{proof}
	For \ref{lem:RCDF_in_x}, fix $r > 0$ and let $(x_{n})_{n \in \Naturals}$ converge in $X$ to some $x \in X$.
	Then
	\begin{align*}
		\crcdf{x}{r}
		& =	\lim_{n \to \infty} \crcdf{x}{r + d(x, x_{n})} & & \text{since } \cball{x}{r} = \bigcap_{n \in \Naturals} \cball{x}{r + d(x, x_{n})} \\
		& = \limsup_{n \to \infty} \crcdf{x}{r + d(x, x_{n})} \\
		& \geq \limsup_{n \to \infty} \crcdf{x_{n}}{r} & & \text{since $\cball{x}{r + d(x, x_{n})} \supseteq \cball{x_{n}}{r}$} .
	\end{align*}

	For \ref{lem:RCDF_in_r}, monotonicity follows from the monotonicity of probability.
	To examine continuity, fix $x \in X$ and let $(r_{n})_{n \in \Naturals}$ be a convergent sequence in $[0, \infty)$ with limit $r \geq 0$.
	If $(r_{n})_{n \in \Naturals}$ is decreasing, then $\bigcap_{n \in \Naturals} \cball{x}{r_{n}} =  \cball{x}{r}$ and so the continuity of probability along monotone sequences implies that $\crcdf{x}{r_{n}} \searrow \crcdf{x}{r}$, which establishes continuity from the right.
	If $(r_{n})_{n \in \Naturals}$ is increasing, then $\bigcup_{n \in \Naturals} \cball{x}{r_{n}} = \oball{x}{r}$, and continuity of probability implies that $\crcdf{x}{r_{n}} \nearrow \mu(\oball{x}{r}) \leq \crcdf{x}{r}$, and this establishes existence of a limit from the left.
	Now let $r_{n} \to r$, and make no assumption that this convergence is monotone.
	By the above,
	\begin{equation*}
		\limsup_{n \to \infty} \crcdf{x}{r_{n}} \in \{ \mu(\oball{x}{r}) , \crcdf{x}{r} \} ,
	\end{equation*}
	i.e.\ the $\limsup$ is at most $\crcdf{x}{r}$, which establishes upper semicontinuity.
	Finally, a.e.-differentiability of $r \mapsto \mu(\cball{x}{r})$ follows from monotonicity and Lebesgue's theorem on differentiability of monotone functions.
\end{proof}

\begin{corollary}
	\label{cor:no_unbounded_sequence_approximates_M_r}
	Let $X$ be a separable metric space, let $\mu \in \prob{X}$, and fix $r > 0$.
	Then
	\begin{equation*}
		M_{r} \defeq \sup_{x \in X} \crcdf{x}{r} > 0
	\end{equation*}
	and every sequence $(x_{n})_{n \in \Naturals}$ such that $\crcdf{x_{n}}{r} \to M_{r}$ as $n \to \infty$ is bounded.
\end{corollary}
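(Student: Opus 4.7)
The plan is to prove the two assertions separately, drawing on results already established earlier in the paper. For the strict positivity $M_r > 0$, I would exploit separability directly: choose a countable dense subset $\{y_k\}_{k \in \Naturals} \subseteq X$. For every $x \in X$ and every $r > 0$, density supplies some $y_k$ with $d(x, y_k) < r$, so $x \in \cball{y_k}{r}$; hence $X = \bigcup_{k \in \Naturals} \cball{y_k}{r}$. Countable subadditivity of $\mu$ then gives
\begin{equation*}
    1 = \mu(X) \leq \sum_{k \in \Naturals} \crcdf{y_k}{r} ,
\end{equation*}
which forces at least one term to be strictly positive, so $M_r \geq \crcdf{y_k}{r} > 0$.

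For the boundedness claim, I would appeal to \Cref{lem:upper_closure_is_closed_and_bounded}\ref{lem:upper_closure_is_closed_and_bounded_3}. Since $M_r > 0$, fix any $t$ with $0 < t < M_r$. Because $\crcdf{x_n}{r} \to M_r$, there exists $N \in \Naturals$ such that $\crcdf{x_n}{r} \geq t$ for all $n \geq N$. By \Cref{lem:upper_closure_is_closed_and_bounded}\ref{lem:upper_closure_is_closed_and_bounded_3}, the level set $\set{x' \in X}{\crcdf{x'}{r} \geq t}$ is bounded (indeed, with separation measure of non-compactness at most $2r$). Hence the tail $(x_n)_{n \geq N}$ lies in a bounded set, and adjoining the finitely many initial terms preserves boundedness.

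There is no real obstacle here: the corollary is essentially packaging previously established facts. The only small point to watch is the logical order — one must first establish $M_r > 0$ so that a threshold $t$ with $0 < t < M_r$ exists, allowing the level-set argument to apply non-trivially. If desired, the argument can be made completely uniform by noting that for the specific sequence $(x_n)$ with $\crcdf{x_n}{r} \to M_r$, one may even take $t = M_r / 2$, yielding an explicit bound on the separation of $(x_n)_{n \geq N}$ of at most $2r$ via the same lemma.
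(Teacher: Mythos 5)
Your proof is correct. The boundedness argument is essentially identical to the paper's: both fix a positive threshold below $M_r$ (the paper uses $M_r/2$, you use a generic $t \in (0, M_r)$, and you note $M_r/2$ works), observe that the tail of the sequence lies in the level set $\set{x' \in X}{\crcdf{x'}{r} \geq t}$, and invoke \Cref{lem:upper_closure_is_closed_and_bounded}\ref{lem:upper_closure_is_closed_and_bounded_3} for boundedness of that level set. The only genuine difference is in the proof that $M_r > 0$: the paper cites the theorem that separability implies $\supp(\mu) \neq \varnothing$ \citep[Theorem~12.14]{AliprantisBorder2006} and then picks a point of the support, whereas you give a direct, self-contained argument covering $X$ by countably many balls $\cball{y_k}{r}$ centred at a countable dense set and applying countable subadditivity. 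Your route is essentially an inlined proof of that support theorem, so it buys elementariness at the cost of a few extra lines; both are equally valid.
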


\begin{proof}
	The separability of $X$ implies that $\supp(\mu) \neq \varnothing$ \citep[Theorem~12.14]{AliprantisBorder2006}, and so there must exist at least one $x \in X$ with $\crcdf{x}{r} > 0$.
	Hence, $M_{r} > 0$.

	Now let $(x_{n})_{n \in \Naturals}$ be any sequence such that $\crcdf{x_{n}}{r} \to M_{r}$ as $n \to \infty$.
	Then there must exist $N \in \Naturals$ such that
	\begin{equation*}
		n \geq N \implies \crcdf{x_{n}}{r} \geq \nicefrac{M_{r}}{2} > 0,
	\end{equation*}
	i.e.\ $x_{n}$ eventually lies in $\set{ x \in X }{ \crcdf{x_{n}}{r} \geq \nicefrac{M_{r}}{2} }$, which is a bounded set by \Cref{lem:upper_closure_is_closed_and_bounded}\ref{lem:upper_closure_is_closed_and_bounded_3}, and so $(x_{n})_{n \in \Naturals}$ is a bounded sequence.
\end{proof}

\begin{definition}
	\label{defn:spherically_non-atomic}

	Let $X$ be a metric space.
	A probability measure $\mu \in \prob{X}$ will be called \defterm{spherically non-atomic} if every metric sphere has zero $\mu$-mass, i.e., for all $r \geq 0$ and all $x \in X$, $\mu(\oball{x}{r}) = \crcdf{x}{r}$.
\end{definition}

\begin{corollary}[RCDFs of spherically non-atomic measures]
	\label{cor:RCDF_spherically_nonatomic}
	Let $X$ be a metric space and assume that $\mu \in \prob{X}$ is spherically non-atomic.
	\begin{enumerate}[label=(\alph*)]
		\item \label{cor:RCDF_spherically_nonatomic_in_x}
		For each $r > 0$, $x \mapsto \crcdf{x}{r}$ is continuous.
		\item \label{cor:RCDF_spherically_nonatomic_in_r}
		For each $x \in X$, $r \mapsto \crcdf{x}{r}$ is monotonically increasing and continuous.
		\item \label{cor:RCDF_spherically_nonatomic_joint}
		$(r, x) \mapsto \crcdf{x}{r}$ is continuous.
		\item \label{cor:RCDF_spherically_nonatomic_at_0}
		For each $x \in X$, $\mu(\{x\}) = \mu(\cball{x}{0}) = \lim_{r \to 0} \crcdf{x}{r} = 0$.
	\end{enumerate}
\end{corollary}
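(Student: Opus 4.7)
The four assertions all reduce to combining the general regularity properties of RCDFs in \Cref{lem:RCDF} with the defining identity $\mu(\oball{x}{r}) = \crcdf{x}{r}$ of spherical non-atomicity. My plan is to establish \ref{cor:RCDF_spherically_nonatomic_in_r} first, then bootstrap to \ref{cor:RCDF_spherically_nonatomic_in_x} and \ref{cor:RCDF_spherically_nonatomic_joint}, and finally handle \ref{cor:RCDF_spherically_nonatomic_at_0}. For \ref{cor:RCDF_spherically_nonatomic_in_r}, \Cref{lem:RCDF}\ref{lem:RCDF_in_r} already gives monotonicity and right-continuity of $r \mapsto \crcdf{x}{r}$, and identifies the left limit at $r$ with $\mu(\oball{x}{r})$; spherical non-atomicity forces this limit to equal $\crcdf{x}{r}$, so both one-sided limits match the function value and we get continuity.

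For \ref{cor:RCDF_spherically_nonatomic_in_x}, upper semicontinuity of $x \mapsto \crcdf{x}{r}$ at fixed $r > 0$ is immediate from \Cref{lem:RCDF}\ref{lem:RCDF_in_x}. To obtain lower semicontinuity, I would take a sequence $x_{n} \to x$ and observe, via the triangle inequality, that for each $n$ with $d(x, x_{n}) < r$ one has the inclusion $\oball{x}{r - d(x, x_{n})} \subseteq \cball{x_{n}}{r}$; passing to $\mu$-measures and invoking spherical non-atomicity gives
\begin{equation*}
	\crcdf{x_{n}}{r} \geq \mu(\oball{x}{r - d(x, x_{n})}) = \crcdf{x}{r - d(x, x_{n})} ,
\end{equation*}
so taking the $\liminf$ and applying \ref{cor:RCDF_spherically_nonatomic_in_r} yields $\liminf_{n \to \infty} \crcdf{x_{n}}{r} \geq \crcdf{x}{r}$. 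For \ref{cor:RCDF_spherically_nonatomic_joint}, I would let $(r_{n}, x_{n}) \to (r, x)$ and set $\varepsilon_{n} \defeq d(x, x_{n}) + \absval{r - r_{n}}$, so $\varepsilon_{n} \to 0$; the triangle inequality then sandwiches $\oball{x}{r - \varepsilon_{n}} \subseteq \cball{x_{n}}{r_{n}} \subseteq \cball{x}{r + \varepsilon_{n}}$ (with the left-hand set read as empty when $r - \varepsilon_{n} \leq 0$), so spherical non-atomicity together with \ref{cor:RCDF_spherically_nonatomic_in_r} squeezes $\crcdf{x_{n}}{r_{n}}$ to $\crcdf{x}{r}$.

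For \ref{cor:RCDF_spherically_nonatomic_at_0}, observe that $\oball{x}{0} = \varnothing$ while $\cball{x}{0} = \{x\}$, so applying the spherical non-atomicity identity at radius $r = 0$ immediately yields $\mu(\{x\}) = \mu(\cball{x}{0}) = \mu(\oball{x}{0}) = 0$; the vanishing of $\lim_{r \to 0} \crcdf{x}{r}$ is then just right-continuity at $0$, a special case of \Cref{lem:RCDF}\ref{lem:RCDF_in_r}. There is no substantial obstacle here --- the only care required is bookkeeping: proving \ref{cor:RCDF_spherically_nonatomic_in_r} before invoking it in \ref{cor:RCDF_spherically_nonatomic_in_x} and \ref{cor:RCDF_spherically_nonatomic_joint}, and interpreting the open-ball inclusions sensibly when the inner radius becomes non-positive.
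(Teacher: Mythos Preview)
Your proof is correct and for parts \ref{cor:RCDF_spherically_nonatomic_in_x}, \ref{cor:RCDF_spherically_nonatomic_in_r}, and \ref{cor:RCDF_spherically_nonatomic_at_0} follows essentially the same route as the paper: combine \Cref{lem:RCDF} with the open-ball/closed-ball identification coming from spherical non-atomicity. Your explicit inclusion argument for lower semicontinuity in \ref{cor:RCDF_spherically_nonatomic_in_x} is exactly what the paper means by ``easy modification of the proof of \Cref{lem:RCDF}''.

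The one genuine difference is \ref{cor:RCDF_spherically_nonatomic_joint}. The paper does not argue directly: having already established separate continuity in $x$ and in $r$, it invokes a classical theorem of Young (1910) asserting joint continuity of a separately continuous function that is monotone in one argument. Your sandwiching argument with $\varepsilon_{n} = d(x, x_{n}) + \absval{r - r_{n}}$ is instead a direct, self-contained proof. This buys you independence from an external reference and is arguably cleaner for this specific situation; the paper's route is shorter on the page and highlights that joint continuity here is a structural consequence of monotonicity rather than something requiring a bespoke estimate.
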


\begin{proof}
	Easy modification of the proof of \Cref{lem:RCDF} shows that
	\begin{itemize}
		\item for each $r > 0$, $x \mapsto \mu(\oball{x}{r})$ is lower semicontinuous;
		\item for each $x \in X$, $r \mapsto \mu(\oball{x}{r})$ is monotonically increasing, is continuous from the left, has limits from the right, and is lower semicontinuous.
	\end{itemize}
	For a spherically non-atomic measure $\mu$, each occurrence of $\mu(\oball{x}{r})$ can be replaced with $\crcdf{x}{r}$, and this together with the original statement of \Cref{lem:RCDF} proves parts \ref{cor:RCDF_spherically_nonatomic_in_x} and \ref{cor:RCDF_spherically_nonatomic_in_r}.

	An easy modification of the classical theorem of \cite{Young1910} on the joint continuity of separately continuous functions (see e.g.\ \citet[Theorem~3.1]{Grushka2019}) establishes \ref{cor:RCDF_spherically_nonatomic_joint}.

	Finally, \ref{cor:RCDF_spherically_nonatomic_at_0} follows from $\mu(\{x\}) = \mu(\cball{x}{0}) = \mu(\oball{x}{0}) = \mu(\varnothing) = 0$;
	the claim regarding the limit follows from the continuity of $r \mapsto \crcdf{x}{r}$, as proven in \ref{cor:RCDF_spherically_nonatomic_in_r}.
\end{proof}

\subsection{Radius-\texorpdfstring{$r$}{r} modes in sequence spaces}
\label{sec:radius_r_modes_sequence_spaces}

Given $p \in [1, \infty)$ and $\alpha \in \Reals^{\Naturals}_{>0}$, we define the corresponding weighted $\ell^{p}$ space and its norm by
\begin{equation*}
	\ell^{p}_{\alpha} \defeq \Set{ x = (x_n)_{n \in \Naturals} \in \Reals^{\Naturals}}{\norm{ x }_{\ell^{p}_{\alpha}} \defeq \left( \sum_{n \in \Naturals} \Absval{ \frac{ x_{n} }{ \alpha_{n} } }^{p} \right)^{\nicefrac{1}{p}} < \infty} .
\end{equation*}
We also equip $\Reals^{\Naturals}$ and its subspaces with the finite-dimensional projections
\begin{equation*}
	P_n \colon \Reals^{\Naturals} \to \Reals^n, \quad x = (x_k)_{k \in \Naturals} \mapsto (x_1, \dots, x_n),
\end{equation*}
and denote the ball of radius $r > 0$ centred at $x \in \Reals^n$ by
\begin{equation*}
	\cballn{x}{r}{n} \defeq \Set{y \in \Reals^n}{ \left( \sum_{k = 1}^n \frac{\absval{y_k - x_k}^p}{\alpha_k^p} \right)^{\nicefrac{1}{p}} \leq r}.
\end{equation*}

\begin{lemma}
	\label{lem:lp_projection_properties}
	Let $X = \ell^{p}_{\alpha}$ for some $p \in [1, \infty)$, $\alpha \in \Reals^{\Naturals}_{>0}$ and let $\mu \in \prob{X}$.
	Define the set function $\mu_n(A) \defeq (\mu \circ P_{n}^{-1})(P_n A)$
	(This function is not necessarily a measure.)
	\begin{enumerate}[label=(\alph*)]
		\item \label{item:lp_projection_properties_1} 
		For any $n \in \Naturals$, $x \in X$, and $r > 0$, the projection maps satisfy 
		$P_n \cball{x}{r} = \cballn{P_n x}{r}{n}$.

		\item \label{item:lp_projection_properties_2} For any $x \in X$ and $r > 0$,
		\begin{equation*}
			\bigcap_{n \in \Naturals} P_{n}^{-1}(P_n \cball{x}{r}) = \cball{x}{r}.
		\end{equation*}

		\item \label{item:lp_projection_properties_3} For any $n \in \Naturals$ and $A \in \Borel{X}$, the projection maps satisfy $P_{n + 1}^{-1}\bigl(P_{n+1} A\bigr) \subseteq P_{n}^{-1}\bigl(P_n A\bigr)$.

		\item \label{item:lp_projection_properties_4} For any $n \in \Naturals$ and $A \in \Borel{X}$, the set functions $\mu_n$ satisfy $\mu_n(A) \geq \mu(A)$.

		\item \label{item:lp_projection_properties_5} For any $x \in X$ and $r > 0$, one has $\lim_{n \to \infty} \mu_n(\cball{x}{r}) = \crcdf{x}{r}$.
	\end{enumerate}
\end{lemma}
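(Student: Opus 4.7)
The plan is to proceed part-by-part, exploiting the observation that the monotone cylindrical structure of the $\ell^p_{\alpha}$ norm lets us relate finite-dimensional sections to the full ball, with all nontrivial analytic content concentrated in the continuity-of-measure step used to prove part~\ref{item:lp_projection_properties_5}.

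For part~\ref{item:lp_projection_properties_1}, I would unfold the definition of the $\ell^p_{\alpha}$-norm: $y \in \cball{x}{r}$ is equivalent to $\sum_{k \in \Naturals} \absval{y_k - x_k}^p / \alpha_k^p \leq r^p$. The ``$\subseteq$'' direction is immediate by truncating the sum. For ``$\supseteq$'', given $z \in \cballn{P_n x}{r}{n}$, I would lift $z$ to $y \defeq (z_1, \dots, z_n, x_{n+1}, x_{n+2}, \dots) \in \ell^p_\alpha$; the tail contributes zero to the norm, so $y \in \cball{x}{r}$ with $P_n y = z$. Part~\ref{item:lp_projection_properties_3} follows directly from the fact that $P_{n+1} a = P_{n+1} y$ implies $P_n a = P_n y$: any preimage point certified by the longer projection is also certified by the shorter one. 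Part~\ref{item:lp_projection_properties_4} is just monotonicity of $\mu$ applied to $A \subseteq P_n^{-1}(P_n A)$, which holds because $a \in A$ implies $P_n a \in P_n A$.

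Part~\ref{item:lp_projection_properties_2} is the core combinatorial claim. The inclusion $\cball{x}{r} \subseteq \bigcap_n P_n^{-1}(P_n \cball{x}{r})$ is trivial. For the reverse, suppose $y$ lies in every $P_n^{-1}(P_n \cball{x}{r})$; then by part~\ref{item:lp_projection_properties_1}, $P_n y \in \cballn{P_n x}{r}{n}$ for every $n$, i.e.
\begin{equation*}
  \sum_{k = 1}^{n} \frac{\absval{y_k - x_k}^p}{\alpha_k^p} \leq r^p \quad \text{for all } n \in \Naturals .
\end{equation*}
Letting $n \to \infty$ by monotone convergence of partial sums yields $y \in \cball{x}{r}$.

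Finally, for part~\ref{item:lp_projection_properties_5}, I would combine the previous parts with continuity of the probability measure $\mu$ from above. By part~\ref{item:lp_projection_properties_1}, $P_n \cball{x}{r} = \cballn{P_n x}{r}{n}$ is a closed Euclidean ball, hence Borel; continuity of $P_n$ ensures each $P_n^{-1}(P_n \cball{x}{r})$ is Borel. By part~\ref{item:lp_projection_properties_3} applied with $A = \cball{x}{r}$, these sets form a decreasing sequence; by part~\ref{item:lp_projection_properties_2}, their intersection is $\cball{x}{r}$. Since $\mu$ is a probability measure, continuity from above yields
\begin{equation*}
  \mu_n(\cball{x}{r}) = \mu\bigl(P_n^{-1}(P_n \cball{x}{r})\bigr) \searrow \mu(\cball{x}{r}) = \crcdf{x}{r} ,
\end{equation*}
as claimed. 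The only mildly delicate point, and the step I would double-check first, is ensuring Borel measurability of the cylindrical sets so that the continuity-of-measure argument is legitimate; everything else is formal manipulation of definitions.
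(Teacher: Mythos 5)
Your proof is correct and follows essentially the same route as the paper: compute $P_n \cball{x}{r}$ explicitly as a finite-dimensional ball, characterise the intersection of the cylinder preimages, and apply continuity of measure from above along the decreasing sequence. The only additions are the explicit lifting $z \mapsto (z_1,\dots,z_n,x_{n+1},\dots)$ for the reverse inclusion in part (a) and the sanity check on Borel measurability in part (e), both of which the paper leaves implicit.
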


\begin{proof}
	\begin{enumerate}[label=(\alph*)]
		\item
		Use that
		\begin{align*}
			P_n \cball{x}{r} &= \Set{y \in \Reals^n}{\text{there exists } \tilde{y} \in X \text{ such that } \norm{ \tilde{y} - x }_{\ell^{p}_{\alpha}} \leq r \text{ and } y = P_n \tilde{y}} \\
							&= \Set{y \in \Reals^n}{\text{there exists } \tilde{y} \in X \text{ such that } \sum_{k = 1}^{\infty} \frac{\absval{\tilde{y}_k - x_k}^p}{\alpha_k^p} \leq r^p \text{ and } y = P_n \tilde{y}} \\
							&= \Set{y \in \Reals^n}{\sum_{k = 1}^n \frac{\absval{y_k - x_k}^p}{\alpha_k^p} \leq r^p} = \cballn{P_n x}{r}{n}.
		\end{align*}

		\item
		Observe that, by \ref{item:lp_projection_properties_1},
		\begin{align*}
			\bigcap_{n \in \Naturals} P_{n}^{-1}(P_n \cball{x}{r}) &= \bigcap_{n \in \Naturals} P_{n}^{-1}(\cballn{P_n x}{r}{n}) \\
													&= \Set{y \in X}{P_n y \in \cballn{P_n x}{r}{n} \text{ for all } n \in \Naturals} \\
													&= \Set{y \in X}{\sum_{k = 1}^n \frac{\absval{y_k - x_k}^p}{\alpha_k^p} \leq r^p \text{ for all } n \in \Naturals} = \cball{x}{r}.
		\end{align*}

		\item
		This is a straightforward consequence of the definitions.

		\item
		This follows from the inclusion $A \subseteq P_{n}^{-1}(P_n A)$ and monotonicity of $\mu$.

		\item
		As $\bigl(P_{n}^{-1}(P_n \cball{x}{r})\bigr)_{n \in \Naturals}$ is a decreasing sequence of sets, it follows that
		\begin{equation*}
			\lim_{n \to \infty} \mu_n(A) = \lim_{n \to \infty} \mu\bigl(P_{n}^{-1}(P_n \cball{x}{r})\bigr) = \mu\left(\bigcap_{n \in \Naturals} P_{n}^{-1}(P_n \cball{x}{r}) \right) = \crcdf{x}{r}
		\end{equation*}
		by continuity of measure.
		\qedhere
	\end{enumerate}
\end{proof}

\begin{lemma}[Spherical non-atomicity and weak upper semicontinuity in sequence spaces]
	\label{lem:spherical_non-atomicity_implies_weak_usc}
	Let $X = \ell^{p}_{\alpha}$, $1 \leq p < \infty$, $\alpha \in \Reals^{\Naturals}_{>0}$, and let $\mu \in \prob{X}$.
	Suppose that $\mu \circ P_{n}^{-1} \in \prob{\Reals^{n}}$ is spherically non-atomic for each $n \in \Naturals$.
	Then, for each fixed $r > 0$, the map $x \mapsto \crcdf{x}{r}$ is weakly upper semicontinuous.
\end{lemma}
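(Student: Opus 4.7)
The strategy is to realise $\crcdf{x}{r}$ as a pointwise decreasing limit of weakly continuous functions of $x$, and then appeal to the elementary fact that the infimum of a family of upper semicontinuous functions is upper semicontinuous in the same topology.

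First, I would fix $r > 0$ and $n \in \Naturals$ and study the map $x \mapsto \mu_n(\cball{x}{r}) = (\mu \circ P_n^{-1})(\cballn{P_n x}{r}{n})$, using the identity $P_n \cball{x}{r} = \cballn{P_n x}{r}{n}$ from \Cref{lem:lp_projection_properties}\ref{item:lp_projection_properties_1}. By hypothesis, $\mu \circ P_n^{-1} \in \prob{\Reals^n}$ is spherically non-atomic, so \Cref{cor:RCDF_spherically_nonatomic}\ref{cor:RCDF_spherically_nonatomic_in_x} applied in $\Reals^n$ gives that $y \mapsto (\mu \circ P_n^{-1})(\cballn{y}{r}{n})$ is norm-continuous on $\Reals^n$. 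On the other hand, each coordinate functional $x \mapsto x_k$ is a bounded linear functional on $\ell^{p}_{\alpha}$, hence weakly continuous, so $P_n$ is continuous from the weak topology on $\ell^p_\alpha$ to the norm topology on $\Reals^n$. Composing these two continuous maps yields that $x \mapsto \mu_n(\cball{x}{r})$ is weakly continuous on $X$.

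Next, I would use \Cref{lem:lp_projection_properties}\ref{item:lp_projection_properties_3} together with monotonicity of $\mu$ to observe that, for each fixed $x$, the sequence $\bigl(\mu_n(\cball{x}{r})\bigr)_{n \in \Naturals}$ is decreasing in $n$, and by \Cref{lem:lp_projection_properties}\ref{item:lp_projection_properties_5} its limit is $\crcdf{x}{r}$. Hence, as functions of $x$,
\begin{equation*}
	\crcdf{x}{r} = \inf_{n \in \Naturals} \mu_n(\cball{x}{r}).
\end{equation*}
Since the pointwise infimum of any family of weakly continuous (and hence weakly upper semicontinuous) functions is weakly upper semicontinuous, the desired conclusion follows.

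The only real subtlety is ensuring that the finite-dimensional ball-mass function is continuous on $\Reals^n$ under the weakest possible assumption: without the spherical non-atomicity of $\mu \circ P_n^{-1}$, \Cref{lem:RCDF}\ref{lem:RCDF_in_x} would give only upper semicontinuity of $y \mapsto (\mu \circ P_n^{-1})(\cballn{y}{r}{n})$, which, when composed with the weak-to-norm continuous map $P_n$, still yields a weakly upper semicontinuous function; so in fact spherical non-atomicity is slightly more than needed here, but it gives the cleanest statement. The rest of the argument is purely formal.
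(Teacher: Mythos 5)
Your proof is correct and follows the same essential strategy as the paper's: realise $\crcdf{x}{r}$ as the decreasing limit of the finite-dimensional marginal masses $\mu_n(\cball{x}{r})$, establish weak continuity of each such marginal via spherical non-atomicity of $\mu \circ P_n^{-1}$ together with weak-to-norm continuity of $P_n$, and then pass to the limit. Where you differ is in the final step: the paper carries out the passage to the limit by hand with an explicit $\varepsilon$-argument along a weakly convergent sequence $x_k \rightharpoonup x^\star$, while you invoke the general topological fact that a pointwise infimum of (weakly) upper semicontinuous functions is (weakly) upper semicontinuous. Your abstraction is arguably cleaner, and even buys a small formal strengthening: it yields genuine topological weak upper semicontinuity, whereas the paper's sequential argument only gives sequential weak upper semicontinuity (the two need not coincide in a non-metrizable weak topology, though both are adequate for the downstream use in \Cref{thm:lp_alpha_radius_r_modes} via Eberlein--\v{S}mulian). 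Your closing observation that spherical non-atomicity is slightly more than needed is also correct: even upper semicontinuity of $y \mapsto (\mu \circ P_n^{-1})(\cballn{y}{r}{n})$, which holds for any measure by \Cref{lem:RCDF}\ref{lem:RCDF_in_x}, would compose with the weak-to-norm continuous $P_n$ to give weak upper semicontinuity of each $\mu_n(\cball{\cdot}{r})$, and the infimum argument then closes; the same remark applies to the paper's proof since only the inequality $\limsup_k \mu_n(\cball{x_k}{r}) \leq \mu_n(\cball{x^\star}{r})$ is actually used.
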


\begin{proof}
	Suppose that $x_k \rightharpoonup x^{\star}$ as $k \to \infty$, and let $\mu_n(A) \defeq (\mu \circ P_{n}^{-1})(P_n A)$.
	As $\mu_n(\cball{x^{\star}}{r}) \searrow \crcdf{x^{\star}}{r}$ (\Cref{lem:lp_projection_properties}), it follows that, for any $\varepsilon > 0$, there exists $N \in \Naturals$ such that, for all $n \geq N$, $\mu_n(\cball{x^{\star}}{r}) - \crcdf{x^{\star}}{r} < \varepsilon$.
	Using this and the inequality $\mu_n(\cball{x_k}{r}) \geq \mu(\cball{x_k}{r})$ for any $k \in \Naturals$, we obtain
	\begin{equation}
		\label{eq:approx_of_rcdf_difference_by_mu_n}
		\crcdf{x_k}{r} - \crcdf{x^{\star}}{r} \leq \mu_n(\cball{x_k}{r}) - \mu_n(\cball{x^{\star}}{r}) + \varepsilon.
	\end{equation}
	By hypothesis, $x_k \rightharpoonup x^{\star}$, so $P_n x_k \to P_n x^{\star}$ as $k \to \infty$.
	As $\mu \circ P_{n}^{-1}$ is assumed to be spherically non-atomic, $x \mapsto (\mu \circ P_{n}^{-1})(\cballn{x}{r}{n})$ is continuous (\Cref{cor:RCDF_spherically_nonatomic}).
	Hence,
	\begin{align*}
		\lim_{k \to \infty} (\mu \circ P_{n}^{-1})(P_n \cball{x_k}{r}) &= \lim_{k \to \infty} (\mu \circ P_{n}^{-1})(\cballn{P_n x_k}{r}{n}) &&\text{(\Cref{lem:lp_projection_properties}\ref{item:lp_projection_properties_1})} \\
		&= (\mu \circ P_{n}^{-1})(\cballn{P_n x^{\star}}{r}{n}) &&\text{(by continuity)} \\
		&= (\mu \circ P_{n}^{-1})(P_n \cball{x^{\star}}{r}) &&\text{(\Cref{lem:lp_projection_properties}\ref{item:lp_projection_properties_1}).}
	\end{align*}
	Hence, $\lim_{k \to \infty} \mu_n(\cball{x_k}{r}) = \mu_n(\cball{x^{\star}}{r})$.
	Taking limits as $k \to \infty$ in \eqref{eq:approx_of_rcdf_difference_by_mu_n} yields that
	\begin{equation*}
		\limsup_{k \to \infty} \crcdf{x_k}{r} - \crcdf{x^{\star}}{r}
		\leq
		\lim_{k \to \infty} \mu_n(\cball{x_k}{r}) - \mu_n(\cball{x^{\star}}{r}) + \varepsilon
		= \varepsilon.
	\end{equation*}
	As $\varepsilon > 0$ was arbitrary, this shows that $x \mapsto \crcdf{x}{r}$ is weakly upper semicontinuous.
\end{proof}

We now state an explicit version of Anderson's inequality following the inequalities of \citet[Lemma~3.6]{DashtiLawStuartVoss2013} for Gaussian measures and \citet[Lemma~6.2]{AgapiouBurgerDashtiHelin2018} for Besov measures with $p = 1$.

Fix parameters\footnote{In the original setting of real analysis, $s$ and $d$ were interpreted as smoothness and spatial dimension respectively, but for us only the ratio $\nicefrac{s}{d}$ is important.} $1 \leq p < \infty$, $s \in \Reals$, and $d \in \Naturals$;
the (\emph{sequence space}) \emph{Besov space} $X_{p}^{s}$ is defined to be $\ell^{p}_{\gamma}$ for the weighting sequence $\gamma_{k} \defeq k^{- (s/d + 1/2) + 1/p}$, and the (\emph{sequence space}) \emph{Besov measure} $B_{p}^{s}$ is defined to be the countable product measure $\bigotimes_{k \in \Naturals} \mu_{k}$, where $\mu_{k} \in \prob{\Reals}$ has Lebesgue density proportional to $\exp ( - \absval{ x_{k} / \gamma_{k} }^p )$.
It is known that $B_{p}^{s}$ charges $X_{p}^{t}$ with full mass when $t = s - (1 + \eta) \nicefrac{d}{p}$ and $\eta > 0$ \citep[e.g.][Lemma~2]{LassasSaksmanSiltanen2009}.

\begin{lemma}[Explicit Anderson inequality for Besov-$p$ priors, $1 \leq p \leq 2$]
	\label{lem:explicit_anderson_Besov}
	Let $s \in \Reals$, $d \in \Naturals$, $\eta > 0$ and let $t \defeq s - (1 + \eta)\nicefrac{d}{p}$.
	Suppose that $X = X_p^t$ and let $\mu = B_p^s \in \prob{X}$ be a sequence-space Besov measure.
	Then, for any $0 < r < \|x\|_{X_p^t}$ and $x \in X$,
	\begin{equation}
		\label{eq:explicit_anderson_Besov}
		\frac{\crcdf{x}{r}}{\crcdf{0}{r}} \leq \exp\left( -\frac{1}{2} \left(\norm{ x }_{X_p^t} - r\right)^p \right).
	\end{equation}
\end{lemma}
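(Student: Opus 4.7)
The plan is to reduce to finite-dimensional projections, use Lebesgue-translation invariance together with a symmetrization trick exploiting evenness and log-concavity of the Besov density, and then establish a pointwise exponential bound that captures the decay in $\norm{x}_{X_p^t}$.

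First, I apply \Cref{lem:lp_projection_properties} to express $\crcdf{x}{r}$ and $\crcdf{0}{r}$ as $n \to \infty$ limits of $\mu_n(\cballn{P_n x}{r}{n})$ and $\mu_n(\cballn{0}{r}{n})$, where $\mu_n \defeq \mu \circ P_n^{-1}$ has Lebesgue density $\rho_n(z) = \prod_{k=1}^n c_k \gamma_k^{-1} \exp(-\absval{z_k/\gamma_k}^p)$ with respect to $\lambda^n$. Since $p \geq 1$, each $\rho_n$ is log-concave and even in each coordinate. It therefore suffices to establish \eqref{eq:explicit_anderson_Besov} for the projected measures and pass to the limit via \Cref{lem:lp_projection_properties}\ref{item:lp_projection_properties_5}. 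In finite dimensions, Lebesgue-translation invariance gives $\mu_n(\cballn{P_n x}{r}{n}) = \int_{\cballn{0}{r}{n}} \rho_n(z + P_n x) \, \rd z$, and symmetrizing under $z \mapsto -z$ (which preserves both the ball $\cballn{0}{r}{n}$ and the even density $\rho_n$) yields
\begin{equation*}
\frac{\mu_n(\cballn{P_n x}{r}{n})}{\mu_n(\cballn{0}{r}{n})} = \frac{1}{\mu_n(\cballn{0}{r}{n})}\int_{\cballn{0}{r}{n}} \tfrac{1}{2}\bigl( \rho_n(z + P_n x) + \rho_n(z - P_n x) \bigr) \, \rd z.
\end{equation*}

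The crux of the proof will be the pointwise inequality
\begin{equation*}
\tfrac{1}{2}\bigl( \rho_n(z + P_n x) + \rho_n(z - P_n x) \bigr) \leq \rho_n(z) \exp\bigl( -\tfrac{1}{2} (\norm{x}_{X_p^t} - r)^p \bigr)
\end{equation*}
for every $z \in \cballn{0}{r}{n}$. Two ingredients enter: the reverse triangle inequality $\norm{z \pm x}_{X_p^t} \geq \norm{x}_{X_p^t} - r$ (which uses the hypothesis $r < \norm{x}_{X_p^t}$), and a Clarkson-type two-point estimate valid for $1 \leq p \leq 2$ that controls $\absval{z_k \pm x_k}^p - \absval{z_k}^p$ from below in terms of $\absval{x_k}^p$. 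The slack needed to produce the constant $\tfrac{1}{2}$ comes from the gap between the prior weights $\gamma_k$ and the space weights $\gamma_k' = \gamma_k k^{(1+\eta)/p} \geq \gamma_k$: the density involves the inverse weights $\gamma_k^{-p}$, whereas the target norm involves $(\gamma_k')^{-p}$, so any loss from Clarkson can be absorbed.

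The pointwise estimate is the main obstacle; converting a coordinate-wise convexity inequality into a clean global bound in the $\norm{\cdot}_{X_p^t}$ norm is delicate, and the restriction $1 \leq p \leq 2$ is used precisely to invoke a suitable Clarkson-type two-point inequality (for $p > 2$ one would instead need the dual Clarkson inequality and a different constant). Once the pointwise bound is in place, its right-hand side is independent of $z$, so integrating against $\rho_n(z)\,\rd z / \mu_n(\cballn{0}{r}{n})$ immediately produces the desired exponential bound at each finite $n$, and passing to the limit $n \to \infty$ finishes the proof.
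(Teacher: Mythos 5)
Your plan takes a genuinely different route from the paper: the paper factors $\exp(-\sum\absval{u_i/\gamma_i}^p)$ as the product of $\exp(-\frac{1}{2}\sum\absval{u_i/\delta_i}^p)$ and a residual, bounds the first factor by its supremum over the ball, and controls the remaining ratio of integrals by Anderson's inequality applied to the (still even, log-concave) residual density. Your proposal avoids Anderson entirely and instead symmetrises and then asserts a \emph{pointwise} bound on the symmetric average of translated densities. Unfortunately, that pointwise bound is false, and this is where the proof breaks.

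Concretely, take $n = 1$, $p = 1$, and note that $\gamma_1 = \delta_1$ (the gap $\delta_k/\gamma_k = k^{(1+\eta)/p}$ equals $1$ at $k = 1$, so there is no weight gap to absorb any loss in the first coordinate). With $\rho_1(t) = \tfrac{1}{2}e^{-\absval{t}}$, choose $z_1 = r$ (the boundary of the ball) and let $x_1 \searrow r$. Then your claimed pointwise inequality reads
\begin{equation*}
\tfrac{1}{4}\bigl(e^{-(x_1+r)} + e^{-(x_1-r)}\bigr) \leq \tfrac{1}{2}e^{-r}\exp\bigl(-\tfrac{1}{2}(x_1 - r)\bigr),
\end{equation*}
and in the limit $x_1 \to r^{+}$ this becomes $\tfrac{1}{4}\bigl(e^{-2r}+1\bigr) \leq \tfrac{1}{2}e^{-r}$, i.e.\ $(1-e^{-r})^2 \leq 0$, which fails for every $r > 0$. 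Direct computation of the actual ratio in this one-dimensional example gives $\nicefrac{\crcdf{x}{r}}{\crcdf{0}{r}} = \tfrac{1}{2}e^{-x_1}(e^r+1)$, which \emph{does} satisfy the claimed bound --- but only after integration; the pointwise comparison of integrands is simply wrong near the boundary of the ball. This is precisely the gap Anderson's inequality is designed to fill: it is an integral statement about translates of log-concave symmetric densities over symmetric convex sets, not a pointwise one, and no coordinate-wise Clarkson or Hanner estimate can repair a pointwise bound that fails already when the density has no slack. If you want to keep the symmetrisation framing, you would need to replace the pointwise step by an integrated comparison (essentially re-deriving Anderson's inequality or Prékopa--Leindler), at which point you arrive back at the paper's argument.
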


\begin{proof}
	The space $X_p^t$ can be written as the sequence space $\ell^{p}_\delta$ with the weighting sequence $\delta_{k} = k^{-(s/d+1/2) + (2 + \eta)/p} > \gamma_{k} = k^{-(s/d + 1/2) + 1/p}$.
	The formula for the unnormalised marginal density of the Besov measure then yields
	\begin{align*}
		& \frac{\mu_n(\cball{x}{r})}{\mu_n(\cball{0}{r})} \\
		& \quad = \frac{\int_{P_n \cball{x}{r}} \exp\left(-\sum_{i = 1}^n \absval{ \nicefrac{u_{i}}{\gamma_{i}} }^p \right) \,\rd u}{\int_{P_n \cball{0}{r}} \exp\left(-\sum_{i = 1}^n \absval{ \nicefrac{u_{i}}{\gamma_{i}} }^p \right) \,\rd u} \\
		& \quad \leq \frac{\sup_{y \in P_n \cball{x}{r}} \exp(-\frac{1}{2} \sum_{i = 1}^n \absval{ \nicefrac{y_{i}}{\delta_{i}} }^p) \int_{P_n \cball{x}{r}} \exp\left(-\sum_{i = 1}^n \absval{ \nicefrac{u_{i}}{\gamma_{i}} }^p + \frac{1}{2} \sum_{i = 1}^n \absval{ \nicefrac{u_{i}}{\delta_{i}} }^p \right) \,\rd u}{\int_{P_n \cball{0}{r}} \exp\left(-\sum_{i = 1}^n \absval{ \nicefrac{u_{i}}{\gamma_{i}} }^p + \frac{1}{2} \sum_{i = 1}^n \absval{ \nicefrac{u_{i}}{\delta_{i}} }^p \right) \,\rd u} \\
		& \quad \leq \sup_{y \in P_n \cball{x}{r}} \exp\left(-\frac{1}{2} \sum_{i = 1}^n \Absval{ \nicefrac{ y_i }{ \delta_i } }^p \right),
	\end{align*}
	where the ratio of integrals is bounded above by $1$ using Anderson's inequality \citep{Anderson1955}.
	Hence, as $\lim_{n \to \infty} \mu_n(\cball{x}{r}) = \crcdf{x}{r}$ (\Cref{lem:lp_projection_properties}),
	\begin{equation*}
		\frac{\crcdf{x}{r}}{\crcdf{0}{r}} = \lim_{n \to \infty} \frac{\mu_n(\cball{x}{r})}{\mu_n(\cball{0}{r})} \leq \lim_{n \to \infty} \sup_{y \in P_n \cball{x}{r}} \exp\left(-\frac{1}{2} \sum_{i = 1}^n \Absval{ \nicefrac{ y_i }{ \delta_i } }^p \right) = \exp\left(-\frac{1}{2} \left(\norm{ x }_{X_p^t} - r\right)^p\right) ,
	\end{equation*}
	which establishes \eqref{eq:explicit_anderson_Besov}.
\end{proof}

\begin{theorem}[Radius-$r$ modes for product measures on weighted $\ell^{p}$ spaces]
	\label{thm:lp_alpha_radius_r_modes}
	Let $X = \ell_{\alpha}^{p}$, $1 < p < \infty$, $\alpha \in \Reals^{\Naturals}_{> 0}$. 
	Let $\mu_0 = \bigotimes_{n \in \Naturals} \mu_n \in \prob{X}$ with each $\mu_n \ll \lambda^{1}$ on $\Reals$.
	If $\mu \ll \mu_0$, then $\mu$ has a radius-$r$ mode for any $r > 0$.
\end{theorem}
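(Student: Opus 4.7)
The plan is to apply \Cref{thm:r_greatest}\ref{item:r_greatest_weakly_compact}, using that $X = \ell_{\alpha}^{p}$ with $1 < p < \infty$ is a separable reflexive Banach space (it is isometrically isomorphic to $\ell^{p}$ via the weighting map $x \mapsto (\nicefrac{x_{n}}{\alpha_{n}})_{n \in \Naturals}$). In a reflexive Banach space, every weakly closed and bounded set is weakly compact, so it suffices to exhibit a sequence $(x_{n})_{n \in \Naturals}$ with $\crcdf{x_{n}}{r} \nearrow M_{r} > 0$ such that $\upclosure_{r} x_{n}$ is weakly closed and bounded for all sufficiently large $n$. Since $X$ is separable, \Cref{cor:no_unbounded_sequence_approximates_M_r} guarantees such a sequence with $\crcdf{x_{n}}{r} > 0$ eventually; boundedness of $\upclosure_{r} x_{n}$ then follows from \Cref{lem:upper_closure_is_closed_and_bounded}\ref{lem:upper_closure_is_closed_and_bounded_4}.

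The substantive step is weak closedness of $\upclosure_{r} x_{n}$, which I would obtain from weak upper semicontinuity of $x \mapsto \crcdf{x}{r}$ as provided by \Cref{lem:spherical_non-atomicity_implies_weak_usc}. To invoke that lemma I would verify that each finite-dimensional projection $\mu \circ P_{n}^{-1} \in \prob{\Reals^{n}}$ is spherically non-atomic. The hypothesis $\mu \ll \mu_{0}$ gives $\mu \circ P_{n}^{-1} \ll \mu_{0} \circ P_{n}^{-1}$: indeed, if $A \in \Borel{\Reals^{n}}$ satisfies $(\mu_{0} \circ P_{n}^{-1})(A) = 0$, then $\mu_{0}(P_{n}^{-1} A) = 0$, so $\mu(P_{n}^{-1} A) = 0$ and hence $(\mu \circ P_{n}^{-1})(A) = 0$. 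But $\mu_{0} \circ P_{n}^{-1} = \mu_{1} \otimes \cdots \otimes \mu_{n}$ is a product of measures that are $\lambda^{1}$-absolutely continuous, hence $\lambda^{n}$-absolutely continuous. Therefore $\mu \circ P_{n}^{-1} \ll \lambda^{n}$, and since metric spheres in $\Reals^{n}$ have zero Lebesgue measure, $\mu \circ P_{n}^{-1}$ is spherically non-atomic, as required.

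Putting the pieces together: the upward closures $\upclosure_{r} x_{n} = \set{x' \in X}{\crcdf{x'}{r} \geq \crcdf{x_{n}}{r}}$ are weakly closed (by the weak upper semicontinuity just established) and bounded, hence weakly compact by reflexivity; \Cref{thm:r_greatest}\ref{item:r_greatest_weakly_compact} then yields non-empty, weakly compact $\mathfrak{M}_{r}$, i.e.\ the existence of a radius-$r$ mode.

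The main obstacle is really the single technical verification above, namely that the finite-dimensional projections $\mu \circ P_{n}^{-1}$ remain absolutely continuous with respect to Lebesgue measure. Everything else is a matter of citing the appropriate lemma. I would also flag that the restriction $p > 1$ is exactly what supplies reflexivity; the case $p = 1$ would require a separate argument, since $\ell^{1}_{\alpha}$ is not reflexive and closed bounded sets there need not be weakly compact, and this is why the Besov-$1$ case discussed in \Cref{lem:explicit_anderson_Besov} requires the explicit Anderson inequality rather than a soft compactness argument.
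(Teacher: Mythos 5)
Your proposal is correct and rests on exactly the same key technical step as the paper's proof: establishing spherical non-atomicity of the finite-dimensional projections $\mu \circ P_n^{-1}$ (via $\mu \ll \mu_0$ and $\mu_0 \circ P_n^{-1} \ll \lambda^n$), then invoking \Cref{lem:spherical_non-atomicity_implies_weak_usc} for weak upper semicontinuity of $x \mapsto \crcdf{x}{r}$, and finally exploiting reflexivity of $\ell^p_\alpha$ for $p > 1$. The only difference is packaging: the paper extracts a weakly convergent subsequence from a maximising sequence and applies weak upper semicontinuity directly (a Weierstrass-type argument), whereas you route through \Cref{thm:r_greatest}\ref{item:r_greatest_weakly_compact} and Cantor's intersection theorem; these are logically equivalent formulations of the same compactness principle, so the substance is identical. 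Your closing remark about $p = 1$ is a nice observation, though the paper does not make that connection explicit.
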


\begin{proof}
	As $\mu_0$ is a product of the measures $\mu_n$, which are all absolutely continuous with respect to $\lambda^{1}$, the pushforward measures $\mu_0 \circ P_{n}^{-1}$ are absolutely continuous with respect to $\lambda^{n}$.
	As $\mu \ll \mu_0$, it follows that $\mu \circ P_{n}^{-1} \ll \mu_0 \circ P_{n}^{-1}$
	, so the pushforwards of $\mu$ are also absolutely continuous with respect to $\lambda^{n}$.
	Hence, the measure $\mu$ has spherically non-atomic pushforwards $\mu \circ P_{n}^{-1}$, and so the map $x \mapsto \crcdf{x}{r}$ is weakly upper semicontinuous for any $r > 0$ (\Cref{lem:spherical_non-atomicity_implies_weak_usc}).
	As any sequence $(x_n)_{n \in \Naturals}$ with $\crcdf{x_n}{r} \nearrow M_r$ is bounded (\Cref{cor:no_unbounded_sequence_approximates_M_r}), there must exist a weakly convergent subsequence $(x_{n_k})_{k \in \Naturals} \rightharpoonup x^{\star}$ by the reflexivity of $\ell^{p}_\alpha$, $p > 1$. 
	The weak upper semicontinuity of $x \mapsto \crcdf{x}{r}$ implies that $x^{\star}$ is a radius-$r$ mode, because $M_r = \lim_{k \to \infty} \crcdf{x_{n_k}}{r} \leq \crcdf{x^{\star}}{r}$.
\end{proof}

\begin{corollary}
	\label{cor:Gaussian_Besov_Cauchy_radius_r_modes}
	Suppose that $X = \ell^{p}_{\alpha}$, $1 < p < \infty$, $\alpha \in \Reals^{\Naturals}_{> 0}$.
	If $\mu \ll \mu_0$ and $\mu_0 = \bigotimes_{n \in \Naturals} \mu_n$ is
	\begin{enumerate}[label=(\alph*)]
		\item a Gaussian measure;
		\item a Besov measure; or
		\item a Cauchy measure,
	\end{enumerate}
	then $\mu$ has a radius-$r$ mode for any $r > 0$.
\end{corollary}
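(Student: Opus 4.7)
The plan is to deduce this corollary as an immediate application of \Cref{thm:lp_alpha_radius_r_modes}. That theorem requires two ingredients: the hypothesis $\mu \ll \mu_0$, which is given in each case, and that $\mu_0 = \bigotimes_{n \in \Naturals} \mu_n$ be a countable product measure on $X = \ell^p_\alpha$ whose one-dimensional factors $\mu_n$ are absolutely continuous with respect to $\lambda^{1}$ on $\Reals$. Only this second condition needs verification in each of the three listed cases, and in each case it follows just by writing down the explicit one-dimensional density.

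For a Gaussian product measure, each factor $\mu_n$ is a one-dimensional Gaussian with some mean $m_n \in \Reals$ and variance $\sigma_n^{2} > 0$, hence has a smooth strictly positive Lebesgue density of the usual Gaussian form, so $\mu_n \ll \lambda^{1}$. For a sequence-space Besov measure $B_p^s$ as introduced above \Cref{lem:explicit_anderson_Besov}, the $k\textsuperscript{th}$ factor is defined directly via its Lebesgue density proportional to $\exp(-\absval{x_k/\gamma_k}^p)$, so $\mu_k \ll \lambda^{1}$ by construction. For a Cauchy product measure, each factor $\mu_n$ has a Lebesgue density of the form $\propto (\alpha_n^{2} + x_n^{2})^{-1}$ (up to location and scale), which is again absolutely continuous with respect to $\lambda^{1}$.

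In each case the hypotheses of \Cref{thm:lp_alpha_radius_r_modes} are satisfied and the existence of a radius-$r$ mode for every $r > 0$ follows at once. There is no real obstacle here: the content of the corollary is the list of concrete priors of interest in Bayesian inverse problems to which the abstract theorem applies, and the verification is purely a matter of recalling each standard definition. The only minor care required is to match the weighted $\ell^p$ setting of \Cref{thm:lp_alpha_radius_r_modes} to the parameter conventions used for each of the three prior families (i.e.\ choosing the weighting sequence $\alpha$ so that $X_p^t = \ell^p_\alpha$ for the Besov case, and ensuring that the ambient space is chosen so that $\mu_0$ puts full mass on $X$), but this is built into the respective definitions.
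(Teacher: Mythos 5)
Your proof is correct and matches the paper's (implicit) reasoning: the corollary is stated as an immediate consequence of \Cref{thm:lp_alpha_radius_r_modes}, and the only thing to check is that each of the three product priors has one-dimensional factors that are absolutely continuous with respect to $\lambda^1$, which you verify by writing down the standard densities. No gaps.
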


\subsection{Small-ball probabilities for the countable dense antichain}

The measure in \Cref{thm:countable_dense_antichain} places variants of the prototype densities $\rho_{k,m}$ at each dyadic rational. 
While a variety of constructions are possible (see \Cref{rem:countable_dense_antichain}), we choose to use the dyadic rationals in $[0, 1]$ as the dense set for simplicity.
The advantage of using the dyadic rationals is that one can exploit the natural ``level'' structure, writing $D_{\ell} \defeq \Set{(2i-1)2^{-\ell}}{1 \leq i \leq 2^{\ell - 1}}$ for those dyadic rationals which, in their simplest form, can be written as $c 2^{-\ell}$.
From this level structure, one can explicitly compute the distance between terms and bound the support of the densities $\rho_{\indexof{\ell}{i}, \massof{\ell}}$ centred at points in $D_{\ell}$.

As the dyadic rationals are precisely the points in $[0, 1]$ with a finite binary expansion, the behaviour of the RCDF $\crcdf{x}{r}$ at an arbitrary point $x \in [0, 1]$ depends on a quantity which we call the \defterm{dyadic irrationality exponent}, and denote $\dyadicirratexp{x}$, which can be thought of as a quantitative estimate on the length of runs of $0$s or $1$s in the binary expansion of $x$.
This quantity is very much analogous to the number-theoretic \defterm{irrationality measure} $\varphi(x, n) \defeq \min_{1 < p < q, q \leq n} |x - \nicefrac{p}{q}|$ and corresponding \defterm{irrationality exponent} $\irratexp{x}$ \citep{FeldmanNesterenko1998}.
We choose the notation $\irratexp{x}$ for the irrationality exponent and not the more usual $\mu(x)$ to avoid confusion with the measure $\mu$.

\begin{definition}
	\label{defn:dyadic_irrationality_exponent}

	\begin{enumerate}[label=(\alph*)]
		\item The \defterm{dyadic irrationality measure} of $x \in [0, 1]$ is given by $\dyadicmeas{x}{\ell} \defeq \min_{q \in \biguplus_{i = 1}^{\ell} D_i} \absval{ x - q }$.

		\item The \defterm{dyadic irrationality exponent} of $x \in [0, 1] \setminus D$ is given by
		\begin{equation*}
			\dyadicirratexp{x} \defeq \inf \Set{\beta \geq 1}{\liminf_{\ell \to \infty} \frac{\dyadicmeas{x}{\ell}}{2^{-\beta \ell}} > 0} = \sup \Set{\beta \geq 1}{\liminf_{\ell \to \infty} \frac{\dyadicmeas{x}{\ell}}{2^{-\beta \ell}} < \infty}.
		\end{equation*}
	\end{enumerate}
\end{definition}

The dyadic irrationality exponent $\dyadicirratexp{x}$ is well defined, and indeed
\begin{equation}
	\label{eq:dyadic_irrat_exp_zero_or_infty}
	\liminf_{\ell \to \infty} \frac{\dyadicmeas{x}{\ell}}{2^{-\beta \ell}} = \begin{cases}
		0, & \beta < \dyadicirratexp{x}, \\
		+\infty, & \beta > \dyadicirratexp{x}.
	\end{cases}
\end{equation}
In general, it is not possible to say anything about the limit in \eqref{eq:dyadic_irrat_exp_zero_or_infty} in the critical case $\beta = \dyadicirratexp{x}$; the value could be anything in the range $[0, +\infty]$.
Furthermore, as $\irratmeas{x}{2^{\ell}} \leq \dyadicmeas{x}{\ell}$, it immediately follows that $\dyadicirratexp{x} \leq \irratexp{x}$, but the quantities are not equal in general --- for example, any irrational number must satisfy $\irratexp{x} \geq 2$ by Dirichlet's approximation theorem, but one can construct irrational numbers with $\dyadicirratexp{x} = 1$.

\begin{lemma}[Properties of the measure in \Cref{thm:countable_dense_antichain}]
	\label{lem:distance_bounds_countable_dense_antichain}
	Let $\mu \in \prob{\Reals}$ be the measure in \Cref{thm:countable_dense_antichain} and fix $\ell \in \Naturals$.
	\begin{enumerate}[label=(\alph*)]
		\item 
		\label{item:distance_bounds_support_size}
		Given $q_{\ell,i} \in D_{\ell}$, the support of the density $\rho_{\indexof{\ell}{i}, \massof{\ell}}(\quark - q_{\ell,i})$ is contained in $\cball{q_{\ell,i}}{2^{-4\ell+3}}$. 

		\item 
		\label{item:distance_bounds_disjoint_support}
		For distinct $q_{\ell,i}, q_{\ell,i'} \in D_{\ell}$, the densities $\rho_{\indexof{\ell}{i},\massof{\ell}}(\quark - q_{\ell,i})$ and $\rho_{\indexof{\ell}{i'},\massof{\ell}}(\quark - q_{\ell,i'})$ have disjoint support, and the supports are a distance at least $2^{-\ell} - 2^{-4\ell + 4}$ apart.

		\item
		\label{item:distance_bounds_equidistant_implies_r_growth}
		Fix $\delta, r > 0$ and $x \in [0, 1]$, and suppose that $\inf_{q \in D_{\ell}} \absval{ x - q } > \delta + r$. 
		Then
		\begin{equation*}
			\sum_{i = 1}^{2^{\ell - 1}} \int_{x - r}^{x + r} \rho_{\indexof{\ell}{i}, \massof{\ell}}(t - q_{\ell, i})\,\rd t \leq 2r \delta^{-\nicefrac{1}{2}}.
		\end{equation*}
	\end{enumerate}
\end{lemma}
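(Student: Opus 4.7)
For part (a), the plan is to apply the truncation radius bound from \Cref{prop:rcdf_family}\ref{item:rcdf_family_5} with parameter $a = 2$. Since $\massof{\ell} = 2^{-2\ell+1}$, one obtains $r(\massof{\ell}) \leq 2 \massof{\ell}^{2} = 2^{-4\ell+3}$. By construction, the density $\rho_{\indexof{\ell}{i}, \massof{\ell}}$ is supported within $\cball{0}{r(\massof{\ell})}$, so translating by $q_{\ell, i}$ yields containment within $\cball{q_{\ell, i}}{2^{-4\ell+3}}$ as claimed.

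For part (b), I would first observe that distinct points in $D_{\ell}$ have spacing
\begin{equation*}
	\absval{q_{\ell, i} - q_{\ell, i'}} = 2 \absval{i - i'} 2^{-\ell} \geq 2^{-\ell+1} .
\end{equation*}
Combining this with part (a), which places each support within a ball of radius $2^{-4\ell+3}$ about its centre, the distance between the two supports is at least $2^{-\ell+1} - 2 \cdot 2^{-4\ell+3} = 2^{-\ell+1} - 2^{-4\ell+4}$, which majorises the weaker claimed bound $2^{-\ell} - 2^{-4\ell+4}$. Since this quantity is strictly positive for all $\ell \geq 1$, the supports are disjoint.

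For part (c), the strategy is to combine a pointwise density bound with the disjoint-support property from (b). For any $t \in [x-r, x+r]$ and any index $i$, the triangle inequality gives $\absval{t - q_{\ell, i}} \geq \absval{x - q_{\ell, i}} - \absval{t - x} > (\delta + r) - r = \delta$. Using the pointwise bound of \Cref{prop:rcdf_family}\ref{item:rcdf_family_6} together with the fact that the truncated density is pointwise dominated by $\rho_{\indexof{\ell}{i}}$ (and exploiting the symmetry of $\rho_{\indexof{\ell}{i}}$ to read that bound as $\absval{\quark}^{-\nicefrac{1}{2}}$), one obtains
\begin{equation*}
	\rho_{\indexof{\ell}{i}, \massof{\ell}}(t - q_{\ell, i}) \leq \absval{t - q_{\ell, i}}^{-\nicefrac{1}{2}} < \delta^{-\nicefrac{1}{2}} .
\end{equation*}
By (b), at any given $t$ at most one term in the sum $\sum_{i} \rho_{\indexof{\ell}{i}, \massof{\ell}}(t - q_{\ell, i})$ can be nonzero, so the pointwise sum is itself bounded by $\delta^{-\nicefrac{1}{2}}$; integrating over $[x-r, x+r]$ then yields the claimed bound of $2r \delta^{-\nicefrac{1}{2}}$.

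The main obstacle is essentially bookkeeping of the dyadic exponents, and in (c) the crucial use of the disjoint-support property of (b) to collapse the sum to a pointwise bound --- without this step, one would pick up an extra factor of $2^{\ell - 1}$ (the number of centres in $D_{\ell}$) and the desired inequality would fail outright. No deep idea is required beyond careful application of the quantitative properties of the prototype densities already established in \Cref{prop:rcdf_family}.
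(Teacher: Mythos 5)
Your proof is correct and takes essentially the same route as the paper: part (a) invokes the truncation bound of \Cref{prop:rcdf_family}\ref{item:rcdf_family_5}, part (b) combines this with the spacing of $D_{\ell}$, and part (c) uses the pointwise bound of \Cref{prop:rcdf_family}\ref{item:rcdf_family_6} together with disjointness to reduce the sum to a pointwise estimate before integrating. Two small remarks. First, you observe (correctly) that distinct points of $D_{\ell}$ are at least $2^{-\ell+1}$ apart, which is sharper than the $2^{-\ell}$ the paper uses and is more than enough to recover the stated bound $2^{-\ell} - 2^{-4\ell+4}$. Second, your claim that $2^{-\ell+1} - 2^{-4\ell+4}$ is ``strictly positive for all $\ell \geq 1$'' fails at $\ell = 1$, where it equals $0$; however, $D_{1} = \{\nicefrac{1}{2}\}$ is a singleton, so the disjointness assertion is vacuous at that level and nothing is lost. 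Your explicit triangle-inequality verification in (c) that $[x-r,x+r]$ avoids the $\delta$-balls around each $q_{\ell,i}$ is spelled out more carefully than in the paper, which is an improvement in clarity but not in substance.
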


\begin{proof}
	\begin{enumerate}[label=(\alph*)]
		\item
		By construction, $\rho_{\indexof{\ell}{i}, \massof{\ell}}$ has mass $\massof{\ell} = 2^{-2\ell+1}$.
		Hence, the truncation radius of this singularity is at most $2\massof{\ell}^{2}$ (\Cref{prop:rcdf_family}\ref{item:rcdf_family_5}) and therefore the support is contained in a ball of radius $2 \times 2^{-4\ell+2} \leq 2^{-4\ell + 3}$.

		\item 
		Distinct points in $D_{\ell}$ must be a distance at least $2^{-\ell}$ apart, and by \ref{item:distance_bounds_support_size} the supports of the densities $\rho_{\indexof{\ell}{i}, \massof{\ell}}$ and $\rho_{\indexof{\ell}{i'}, \massof{\ell}}$ are contained in a ball of radius $2^{-4\ell+3}$.
		Hence, their supports must be at least a distance $2^{-\ell} - 2 \times 2^{-4\ell+3}$ apart.

		\item
		By \Cref{prop:rcdf_family}\ref{item:rcdf_family_6}, outside of $\cball{q_{\ell,i}}{\delta}$, the density $\rho_{\indexof{\ell}{i}, \massof{\ell}}$ is bounded above by $\delta^{-\nicefrac{1}{2}}$, and the supports of the densities are disjoint, so the upper bound follows immediately.
		\qedhere
	\end{enumerate}
\end{proof}

\begin{lemma}[Behaviour of RCDFs in \Cref{thm:countable_dense_antichain}]
	\label{lem:rcdf_behaviour_countable_dense_antichain}
	
	Let $\mu \in \prob{\Reals}$ be the measure in \Cref{thm:countable_dense_antichain}.
	\begin{enumerate}[label=(\alph*)]
		\item
		\label{item:rcdf_behaviour_dyadic_rationals}
		Suppose that $q_{\ell,i} \in D_{\ell}$.
		Then $\crcdf{q_{\ell,i}}{r} \sim \mu_{\indexof{\ell}{i}, \massof{\ell}}(\cball{0}{r})$ as $r \to 0$.

		\item
		\label{item:rcdf_behaviour_badly_approximated}
		Suppose that $x \in [0, 1] \setminus D$ and that $\dyadicirratexp{x} < 4$.
		Then, for any $\beta \in (\dyadicirratexp{x}, 4)$, it follows that $\crcdf{x}{r} \in O(r^{\min\{1, \nicefrac{2}{\beta}\}})$ as $r \to 0$, and in particular $\crcdf{x}{r} \in o(r^{\nicefrac{1}{2}})$.

		\item
		\label{item:rcdf_behaviour_dyadic_rationals_not_dominated}
		Suppose that $x \in [0, 1] \setminus D$.
		Then, for any $q \in D$, 
		\begin{equation*}
			\liminf_{r \to 0} \frac{\crcdf{x}{r}}{\crcdf{q}{r}} < 1.
		\end{equation*}

		\item
		\label{item:rcdf_behaviour_incomp}
		Suppose that $x \in [0, 1] \setminus D$ and that $\dyadicirratexp{x} > 4$.
		Then, for any $q \in D$, 
		\begin{equation*}
			\limsup_{r \to 0} \frac{\crcdf{x}{r}}{\crcdf{q}{r}} > 1,
		\end{equation*}
		and therefore $x \incomp_0 q$.
	\end{enumerate}
\end{lemma}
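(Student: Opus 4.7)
The overall strategy is to decompose $\crcdf{x}{r}$ into contributions from each singularity $\rho_{\indexof{\ell'}{i'}, \massof{\ell'}}(\quark - q_{\ell', i'})$ and to bound each using the prototype estimates of \Cref{prop:rcdf_family} together with the separation facts of \Cref{lem:distance_bounds_countable_dense_antichain}. For (a), I would split $\crcdf{q_{\ell, i}}{r}$ into the contribution from the singularity centred at $q_{\ell, i}$ --- equal to $\mu_{\indexof{\ell}{i}, \massof{\ell}}(\cball{0}{r}) \in \Theta(\sqrt{r})$ by \Cref{prop:rcdf_family}\ref{item:rcdf_family_4} --- and a remainder from the other singularities. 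By parts (a) and (b) of \Cref{lem:distance_bounds_countable_dense_antichain}, for $r$ small the ball $\cball{q_{\ell, i}}{r}$ meets no support at any level $\ell' \leq L(r)$ other than that of $q_{\ell, i}$ itself, while supports at higher levels contribute combined mass at most $\sum_{\ell' > L(r)} 2^{-\ell'} = 2^{-L(r)}$. Choosing $L(r) = \lceil \log_{2}(1/r) \rceil$ yields a remainder of $O(r) = o(\sqrt{r})$, so the ratio of the main term to $\crcdf{q_{\ell, i}}{r}$ tends to $1$.

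For (b), fix $\beta \in (\dyadicirratexp{x}, 4)$; by definition of the dyadic irrationality exponent there is $C > 0$ with $\dyadicmeas{x}{\ell} \geq C 2^{-\beta \ell}$ for all large $\ell$. A level $\ell'$ contributes nothing to $\crcdf{x}{r}$ unless $\dyadicmeas{x}{\ell'} \leq r + 2^{-4\ell' + 3}$, since otherwise no level-$\ell'$ support meets $\cball{x}{r}$. Because $2^{-4\ell' + 3} \ll 2^{-\beta \ell'}$ for large $\ell'$ (as $\beta < 4$), this condition fails across an intermediate range of $\ell'$: only finitely many ``low'' levels ($\ell' \leq \ell_{\star}$ for a constant $\ell_{\star}$ depending on $x$ and $\beta$) and ``high-tail'' levels (with $\ell' \geq L(r) \sim \beta^{-1} \log_{2}(1/r)$) can contribute. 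The low-level contribution is $O(r)$ by directly estimating the finite-singularity density near $x$, while the tail contribution is at most $\sum_{\ell' \geq L(r)} \massof{\ell'} = O(2^{-2 L(r)}) = O(r^{2/\beta})$, giving the $O(r^{\min\{1, 2/\beta\}})$ bound.

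For (c), the case $\dyadicirratexp{x} < 4$ follows immediately from (a) and (b), since $\crcdf{x}{r} = o(\sqrt{r})$ while $\crcdf{q}{r} \in \Theta(\sqrt{r})$. When $\dyadicirratexp{x} \geq 4$, I would pick a sequence $r_n \to 0$ at which $x$ is well-separated from all dyadic rationals of level at most $L(r_n)$ --- possible because each $D_1 \cup \cdots \cup D_L$ is finite and $x \notin D$ --- so that a level-splitting argument as in (b) still yields $\crcdf{x}{r_n} = o(\sqrt{r_n})$ along the subsequence. For (d), the hypothesis $\dyadicirratexp{x} > 4$ produces infinitely many $\ell$ and $q' \in D_1 \cup \cdots \cup D_\ell$ with $\absval{x - q'} \leq 2^{-4\ell}$, so $x$ lies deep inside the support of the singularity at $q'$. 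For scales $r_n$ slightly larger than $\absval{x - q'}$, the inclusions $\cball{q'}{r_n - \absval{x - q'}} \subseteq \cball{x}{r_n} \subseteq \cball{q'}{r_n + \absval{x - q'}}$ imply $\crcdf{x}{r_n} \sim \crcdf{q'}{r_n}$. By \Cref{prop:rcdf_family}\ref{item:rcdf_family_3} applied to the distinct primes indexing the singularities at $q$ and $q'$, one may choose such $r_n$ so that the RCDF at $q'$ is near its upper oscillation value while that at $q$ is near its lower, giving $\crcdf{x}{r_n} / \crcdf{q}{r_n} > 1$.

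The main technical obstacle is the subsequential analysis in (c) when $\dyadicirratexp{x} \geq 4$: the uniform bound of (b) is unavailable, so one must carefully select ``generic'' radii $r_n$ avoiding resonance with any dyadic rational of moderate level, relying only on the finiteness of $D_1 \cup \cdots \cup D_L$ for each $L$. Part (d) is also delicate: one must coordinate the oscillatory behaviour of two distinct singularities by exploiting the coprimality of their indexing primes, while simultaneously ensuring that $\absval{x - q'} \ll r_n$ along the chosen subsequence so that the asymptotic equivalence $\crcdf{x}{r_n} \sim \crcdf{q'}{r_n}$ can be transferred to give the ratio estimate.
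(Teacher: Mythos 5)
Your sketches of parts (a), (b), and (d) follow the paper's decomposition strategy, but part (c) for $\dyadicirratexp{x} \geq 4$ contains a genuine gap, and (a) and (b) have smaller but real imprecisions.

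\textbf{Part (c), case $\dyadicirratexp{x} \geq 4$.} You propose to find radii $r_n \to 0$ at which $x$ is ``well-separated from all dyadic rationals of level at most $L(r_n)$'' and then run the level-splitting argument of (b) to conclude $\crcdf{x}{r_n} = o(\sqrt{r_n})$. This cannot work: for the tail $\sum_{\ell > L(r_n)} 2^{-\ell} = 2^{-L(r_n)}$ to be $o(\sqrt{r_n})$ you need $L(r_n) > \tfrac{1}{2}\log_2(1/r_n)$ with a margin, while separation from every $q' \in D_1 \cup \cdots \cup D_{L(r_n)}$ (including from the \emph{supports} of their singularities, which have radius $\sim 2^{-4\ell}$) requires $r_n + 2^{-4L(r_n)+3} < \dyadicmeas{x}{L(r_n)}$. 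When $\dyadicirratexp{x} \geq 4$ one has $\dyadicmeas{x}{L} \lesssim 2^{-(4-\varepsilon)L}$ along infinitely many $L$, which forces $L(r_n) \lesssim \tfrac{1}{4-\varepsilon}\log_2(1/r_n) < \tfrac{1}{2}\log_2(1/r_n)$; the two constraints are incompatible. Worse, the target estimate itself is suspect: for $\dyadicirratexp{x} > 4$, $x$ lies deep inside the truncated support of infinitely many singularities, and at the corresponding scales $\crcdf{x}{r} \in \Theta(\sqrt{r})$, so one cannot expect $\crcdf{x}{r_n} = o(\sqrt{r_n})$ to be available in general. The paper's proof of (c) takes a structurally different route: pick any $\beta \in (1, \dyadicirratexp{x})$, extract approximants $q_{\ell_k}$ with $\absval{x - q_{\ell_k}} < 2^{-\beta\ell_k - 1}$, set $s_k \defeq 2^{-\beta\ell_k - 1}$, and show $\crcdf{x}{s_k} \leq \crcdf{q_{\ell_k}}{s_k} + o(\sqrt{s_k})$ directly; the $\liminf < 1$ then follows from the oscillatory behaviour of $\crcdf{q_{\ell_k}}{s_k}/\crcdf{q}{s_k}$ as a ratio of two distinct prototype RCDFs, not from any smallness of $\crcdf{x}{r}$. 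You essentially use this transfer idea in (d) but not in (c); it is needed in both.

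\textbf{Part (a).} The claim ``$\cball{q_{\ell, i}}{r}$ meets no support at any level $\ell' \leq L(r)$ other than that of $q_{\ell, i}$'' is false for the lowest levels: e.g., the level-$1$ singularity at $\nicefrac{1}{2}$ has support of radius up to $\nicefrac{1}{2}$, so its support intersects any ball around $q_{\ell,i}$. The paper handles the finitely many low levels with the bounded-density estimate of \Cref{lem:distance_bounds_countable_dense_antichain}\ref{item:distance_bounds_equidistant_implies_r_growth}, yielding an $O(r)$ correction; this middle term is missing from your two-piece decomposition.

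\textbf{Part (b).} Your tail bound $\sum_{\ell' \geq L(r)} \massof{\ell'}$ treats $\massof{\ell'}$ as a valid per-level bound, but that is only correct when at most one singularity's support meets $\cball{x}{r}$, i.e.\ for $\ell' \lesssim \log_2(1/r)$. Beyond that threshold, several level-$\ell'$ singularities can contribute, and one must use the total level mass $2^{-\ell'}$ instead. The paper's three-regime split (nothing from $[L, \ell_1(r)]$; at most $\massof{\ell}$ per level in $(\ell_1(r), \ell_2(r)]$; total level mass $2^{-\ell}$ for $\ell > \ell_2(r)$) gives $O(r) + O(r^{\nicefrac{2}{\beta}}) + O(r)$, which happens to match your final answer but by a different (and correct) route.
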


\begin{proof}
	\begin{enumerate}[label=(\alph*)]
		\item 
		For any $r < 2^{-\ell}$, the ball $\cball{q_{\ell,i}}{r}$ does not contain any element of $\biguplus_{i = 1}^{\ell} D_i$ except $q_{\ell,i}$.
		Furthermore, for $m \in \Naturals$, if $r < 2^{-m} - 2^{-4m+3}$, then $\cball{q_{\ell,i}}{r}$ does not intersect the support of any singularity centred at $q' \in D_m$. (\Cref{lem:distance_bounds_countable_dense_antichain}\ref{item:distance_bounds_support_size}).
		
		As $2^{-m} - 2^{-4m+3} \in \Omega(2^{-m})$ as $m \to \infty$, there exists $M \in \Naturals$ and $c > 0$ such that
		\begin{equation*}
			2^{-m} - 2^{-4m+3} \geq c2^{-m} \text{ for all $m \geq M$.}
		\end{equation*}
		Picking $\ell_1(r) \defeq \lfloor -\log_2(\nicefrac{r}{c}) \rfloor$, we observe that $\cball{q_{\ell,i}}{r}$ is disjoint from the supports of any singularities in $\biguplus_{i = M}^{\ell_1(r)} D_i$. 
		Hence, we bound the mass from the first $M$ levels using \Cref{lem:distance_bounds_countable_dense_antichain}\ref{item:distance_bounds_equidistant_implies_r_growth}, then note that there is no contribution from levels $M, \dots, \ell_1(r)$, and finally bound the total mass from level $\ell_1(r) + 1$ onwards crudely.
		Fix $\delta \defeq \inf_{q' \in \biguplus_{i = 1}^M D_i} |q_{\ell,i} - q'|$ and suppose that $r \leq \nicefrac{\delta}{2}$;
		then
		\begin{align*}
			\crcdf{q_{\ell,i}}{r} &\leq \mu_{\indexof{\ell}{i},\massof{\ell}}(\cball{0}{r}) \\
			 &\quad\quad+ \sum_{i = 1}^{M} \sum_{j = 1}^{2^{i - 1}} \int_{x - r}^{x + r} \rho_{\indexof{i}{j}, \massof{i}}(t - q_{i,j})\,\rd t + \sum_{i = \ell_1(r) + 1}^{\infty} 2^{-i} \\
			 &\leq \mu_{\indexof{\ell}{i},\massof{\ell}}(\cball{0}{r}) + 2M(\nicefrac{\delta}{2})^{-\nicefrac{1}{2}}r + \sum_{i = \ell_1(r) + 1}^{\infty} 2^{-i} &&\text{(\Cref{lem:distance_bounds_countable_dense_antichain}\ref{item:distance_bounds_equidistant_implies_r_growth})}\\
			 &= \mu_{\indexof{\ell}{i},\massof{\ell}}(\cball{0}{r}) + O(r) \text{ as $r \to 0$.}
		\end{align*}
		As $\mu_{\indexof{\ell}{i},\massof{\ell}}(\cball{0}{r}) \in \Theta(r^{\nicefrac{1}{2}})$ as $r \to 0$ (\Cref{prop:rcdf_family}\ref{item:rcdf_family_3}), the $O(r)$ term is negligible and hence $\crcdf{q_{\ell,i}}{r} \sim \mu_{\indexof{\ell}{i},\massof{\ell}}(\cball{0}{r})$.

		\item
		Take $\beta \in (\dyadicirratexp{x}, 4)$; 
		\eqref{eq:dyadic_irrat_exp_zero_or_infty} implies that
		\begin{equation*}
			\liminf_{\ell \to \infty} \frac{\inf_{q \in \biguplus_{i = 1}^{\ell} D_i} \absval{ x - q }}{2^{-\beta \ell}} = \infty.
		\end{equation*}
		Hence, for $r_{\ell} \defeq 2^{-4\ell+3}$, it follows that $\inf_{q \in \biguplus_{i = 1}^{\ell} D_i} \absval{ x - q } - r_{\ell} \in \Omega(2^{-\beta \ell})$ as $\ell \to \infty$.
		Furthermore, as the supports of the densities centred at distinct elements of $D_{\ell}$ are disjoint and at least a distance $2^{-\ell} - 2^{-4\ell+4} \in \Omega(2^{-\ell})$ apart (\Cref{lem:distance_bounds_countable_dense_antichain}\ref{item:distance_bounds_disjoint_support}), there must exist $L \in \Naturals$ and $c > 0$ such that, for all $\ell \geq L$, 
		\begin{align*}
			\inf_{q \in \biguplus_{i = 1}^{\ell} D_i} \absval{ x - q } - r_{\ell} &> c2^{-\beta \ell} \\
			2^{-\ell} - 2^{-4\ell+4} &> c2^{-\ell}.
		\end{align*}
		Defining $\ell_1(r) \defeq \lfloor -\frac{1}{\beta} \log_2(\nicefrac{r}{c}) \rfloor$ and $\ell_2(r) \defeq \lfloor - \log_2(\nicefrac{r}{c}) \rfloor$, we see that if $L \leq \ell \leq \ell_1(r)$, then $\cball{x}{r}$ is disjoint from the support of every density centred at a point of $D_{\ell}$, and if $\ell_1(r) < \ell \leq \ell_2(r)$, then $\cball{x}{r}$ intersects the support of at most one density centred at a point in $D_{\ell}$.
		For $\ell > \ell_2(r)$, it is sufficient to bound $\crcdf{x}{r}$ by counting the total mass added in the $\ell\textsuperscript{th}$ level.

		Hence, let $\delta \defeq \inf_{q \in \biguplus_{i = 1}^L D_i} \absval{ x - q } > 0$ and pick $r < \nicefrac{\delta}{2}$ so that we may bound the mass from the first $L$ levels using \Cref{lem:distance_bounds_countable_dense_antichain}\ref{item:distance_bounds_equidistant_implies_r_growth}.
		Using this and the claims above,
		\begin{align*}
			\crcdf{x}{r} &\leq \sum_{\ell = 1}^L \sum_{i = 1}^{2^{\ell - 1}} \int_{x - r}^{x + r} \rho_{\indexof{\ell}{i}, \massof{\ell}}(t - q_{\ell,i})\,\rd t + \sum_{\ell = \ell_1(r) + 1}^{\ell_2(r)} \massof{\ell} \\
			 &\quad\quad+ \sum_{\ell = \ell_2(r) + 1}^{\infty} 2^{-\ell} \\
			 &\leq 2L(\nicefrac{\delta}{2})^{-\nicefrac{1}{2}}r + \sum_{\ell = \ell_1(r) + 1}^{\ell_2(r)} 2^{-2\ell+1} + 2^{-\ell_2(r)} &&\text{(\Cref{lem:distance_bounds_countable_dense_antichain}\ref{item:distance_bounds_equidistant_implies_r_growth})}\\
			 &\leq 2L(\nicefrac{\delta}{2})^{-\nicefrac{1}{2}}r + \frac{8}{3} 2^{-2(\ell_1(r) + 1)} + \frac{2r}{c} \\
			 &\leq 2L(\nicefrac{\delta}{2})^{-\nicefrac{1}{2}} r + \frac{8}{3} \left(\frac{r}{c} \right)^{\nicefrac{2}{\beta}} + \frac{2r}{c} \in O(r^{\min\{1, \nicefrac{2}{\beta}\}}) \text{ as $r \to 0$.}
		\end{align*}

		\item
		The case $\dyadicirratexp{x} < 4$ follows from \ref{item:rcdf_behaviour_badly_approximated}.
		Hence, without loss of generality, suppose that $\dyadicirratexp{x} > 1$ and pick $\beta \in (1, \dyadicirratexp{x})$; \eqref{eq:dyadic_irrat_exp_zero_or_infty} implies that
		\begin{equation*}
			\liminf_{\ell \to \infty} \frac{\inf_{q \in \biguplus_{i = 1}^{\ell} D_i} \absval{ x - q }}{2^{-\beta \ell}} = 0.
		\end{equation*}
		Hence, there must exist a sequence $(\ell_k)_{k \in \Naturals} \nearrow \infty$ and a sequence $(q_{\ell_k})_{k \in \Naturals}$ with $q_{\ell_k} \in D_{\ell_k}$ such that $|x - q_{\ell_k}| < 2^{-\beta \ell_k - 1}$.
		This implies that any $q_{\ell_k} \neq q \in \biguplus_{i = 1}^{\ell_k} D_i$ must satisfy $\absval{ x - q } > 2^{-\ell_k} - 2^{-\beta \ell_k - 1}$.
		As it suffices to bound $\crcdf{x}{r}$ at the radii $s_k \defeq 2^{-\beta \ell_k - 1} \searrow 0$, we proceed by bounding the mass contributed by the first $\ell_k$ levels by the total mass from the density centred at $q_{\ell_k}$ plus a $\Theta(r)$ term given by \Cref{lem:distance_bounds_countable_dense_antichain}\ref{item:distance_bounds_equidistant_implies_r_growth}.

		For $\ell > \ell_k$, by a similar argument to that used above, any $q \in D_{\ell}$ satisfies $\absval{ x - q } > 2^{-\ell} - 2^{-\beta \ell_k - 1}$. 
		As the density centred at $q$ is truncated at a radius at most $r_{\ell} \defeq 2^{-4\ell+3}$, and $2^{-\ell} - 2^{-4\ell+3} \in \Omega(2^{-\ell})$ as $\ell \to \infty$, there must exist $L \in \Naturals$ and $c \in (0, 1)$ such that for $L \leq \ell \leq \beta\ell_k$,
		\begin{equation*}
			\absval{ x - q } - r_{\ell} > 2^{-\ell} - 2^{-\beta\ell_k - 1} - 2^{-4\ell+3} > c2^{-\ell} - 2^{-\beta \ell_k - 1}.
		\end{equation*}
		So, $\cball{x}{s_k}$ does not intersect the support of any density centred at a point of $D_{\ell}$ if $s_k < c 2^{-\ell} - 2^{-\beta \ell_k - 1}$;
		hence, if $L \leq \ell < \beta \ell_k + \log_2(c)$, then $\cball{x}{s_k}$ does not intersect the support of any density in the $\ell\textsuperscript{th}$ level.

		Combining these two claims and taking $k$ large enough that $\ell_k \geq L + 1$ yields the bound	
		\begin{align*}
			\crcdf{x}{s_k} &\leq \sum_{\ell = 1}^{\ell_k - 1} \sum_{i = 1}^{2^{\ell - 1}} \int_{x - r}^{x + r} \rho_{\indexof{\ell}{i}, \massof{\ell}}(t - q_{\ell,i})\,\rd t \\
		   &\quad\quad+ \crcdf{q_{\ell_k}}{s_k} + \sum_{\ell = \lfloor\beta\ell_k + \log_2(c)\rfloor}^{\infty} 2^{-\ell} \\
		   &\leq 2\ell_k \left(2^{-\ell_k} - 2^{-\beta \ell_k}\right)^{-\nicefrac{1}{2}} s_k + \crcdf{q_{\ell_k}}{s_k} + \frac{8s_k}{c} &&\text{(\Cref{lem:distance_bounds_countable_dense_antichain}\ref{item:distance_bounds_equidistant_implies_r_growth})} \\
		   &\leq 2\ell_k \left((2s_k)^{\nicefrac{1}{\beta}} - 2s_k\right)^{-\nicefrac{1}{2}}s_k + \crcdf{q_{\ell_k}}{s_k} + \frac{8s_k}{c}.
		\end{align*}
		As $(2s_k)^{\nicefrac{1}{\beta}} - 2s_k \in \Omega(s_k^{\nicefrac{1}{\beta}})$ as $k \to \infty$, we may pick $k$ sufficiently large that $(2s_k)^{\nicefrac{1}{\beta}} - 2s_k \geq Cs_k^{\nicefrac{1}{\beta}}$ for some $C>0$.
		Hence, using that $\ell_k = -\frac{1}{\beta} \left(\log_2(s_k) - 1\right)$,
		\begin{align*}
			\crcdf{x}{s_k} &\leq \crcdf{q_{\ell_k}}{s_k} + 2\ell_k (Cs_k^{\nicefrac{1}{\beta}})^{-\frac{1}{2}} s_k + \frac{8s_k}{c} \\
		   	&\leq  \crcdf{q_{\ell_k}}{s_k} - \frac{2C^{-\nicefrac{1}{2}}}{\beta}\bigl(\log_2(s_k) - 1\bigr) s_k^{1-\nicefrac{1}{2\beta}} + \frac{8s_k}{c} \\
			&= \crcdf{q_{\ell_k}}{s_k} + o(s_k^{\nicefrac{1}{2}}) \text{ as $k \to \infty$.}
		\end{align*}
		The claim follows because
		\begin{equation*}
			\liminf_{r \to 0} \frac{\crcdf{x}{r}}{\crcdf{q}{r}} \leq \liminf_{k \to \infty} \frac{\crcdf{q_{\ell_k}}{s_k}}{\crcdf{q}{s_k}} < 1,
		\end{equation*}
		and the RCDFs at distinct dyadic rationals are chosen so that their ratio oscillates on either side of unity.

		\item
		Take $\beta \in (4, \dyadicirratexp{x})$. 
		Then, by \eqref{eq:dyadic_irrat_exp_zero_or_infty}, there exists a sequence of levels $(\ell_k)_{k \in \Naturals} \nearrow \infty$ and a sequence $(q_{\ell_k})_{k \in \Naturals}$ with $q_{\ell_k} = q_{\ell_k, i_k} \in D_{\ell_k}$ with $|x - q_{\ell_k}| < c 2^{-\beta \ell_k}$.
		Ignoring the contribution from densities centred at points other than $q_{\ell_k}$, we observe that
		\begin{equation*}
			\crcdf{x}{r} \geq \int_{x - r}^{x + r} \rho_{\indexof{\ell_k}{i_k}, \massof{\ell_k}}(t - q_{\ell_k})\,\rd t.
		\end{equation*}
		Either $x > q_{\ell_k}$ or $x < q_{\ell_k}$; we deal with the first case as the second is almost identical.
		Fix $s_k \defeq 2^{-4\ell_k + 3}$.
		By translating the density, we see that
		\begin{align*}
			\crcdf{x}{s_k}
			&\geq \int_{-s_k + (x - q_{\ell_k})}^{s_k} \rho_{\indexof{\ell_k}{i_k}, \massof{\ell_k}}(t)\,\rd t \\
			&= \int_{-s_k}^{s_k} \rho_{\indexof{\ell_k}{i_k}, \massof{\ell_k}}(t)\,\rd t - \int_{-s_k}^{-s_k + (x - q_{\ell_k})} \rho_{\indexof{\ell_k}{i_k}, \massof{\ell_k}}(t)\,\rd t.
		\end{align*}
		Indeed, as the density $\rho_{\indexof{\ell_k}{i_k}, \massof{\ell_k}}$ is truncated at a radius at most $2^{-4\ell+3}$ (\Cref{prop:rcdf_family}\ref{item:rcdf_family_5}), and as $|x - q_{\ell_k}| \in o(s_k)$, one sees that the ball mass around $x$ asymptotically approaches the ball mass around the approximant $q_{\ell_k}$. 
		By a similar argument to \ref{item:rcdf_behaviour_dyadic_rationals_not_dominated},
		\begin{equation*}
			\limsup_{r \to 0} \frac{\crcdf{x}{r}}{\crcdf{q}{r}} \geq \limsup_{k \to \infty} \frac{\crcdf{q_{\ell_k}}{s_k}}{\crcdf{q}{s_k}} > 1,
		\end{equation*}
		i.e.\ $x \not \preceq_0 q$, because the ratio of RCDFs at two distinct dyadic rationals oscillates on either side of one.
		The claim on incomparability then follows because $q \not \preceq_0 x$ (\Cref{lem:rcdf_behaviour_countable_dense_antichain}\ref{item:rcdf_behaviour_dyadic_rationals_not_dominated}).
		\qedhere
	\end{enumerate}
\end{proof}

\section{Alternative small-radius preorders}
\label{sec:alternative_small-radius_preorders}

This section briefly outlines some alternatives to \Cref{defn:analytic_small-radius_preorder} of $\preceq_{0}$ and their shortcomings.

The main difficulty that one encounters with alternative definitions is that the corresponding relation may not be transitive.
We claim that transitivity is an essential property for any small-radius relation: without transitivity, it is not meaningful to talk about maximal and greatest elements, and so the characterisation of modes as greatest elements of an order fails.

Of course, for a small-radius preorder to be relevant to us, its greatest elements must have some natural interpretation as ``points of maximum probability''.
In some sense, determining what characterises a point of maximum probability is the main challenge, but, motivated by the examples considered throughout the paper, we believe that none of the alternative small-radius preorders are a significant improvement on preorder $\preceq_{0}$.

It seems natural to define an ordering on $X$ by taking limits of the positive-radius preorders $\preceq_r$ as $r \to 0$.
As any binary relation can be viewed as a subset of the Cartesian product $X \times X$, where $(x, x') \in \mathord{\preceq_r}$ precisely when $x \preceq_r x'$, we define some candidate limiting orderings using set-theoretic limits of the net $(\preceq_r)_{r > 0}$.
The corresponding limit set need not be a preorder in general, but we show that certain set-theoretic limits do always yield a preorder.

Indeed, the \defterm{set-theoretic limits inferior and superior} of a net $(A_r)_{r > 0}$ of subsets of $X$ are defined by
\begin{align*}
	\liminf_{r \to 0} A_r &\defeq \bigcup_{R > 0} \bigcap_{r < R} A_r = \Set{y \in X}{y \in A_r \text{ for all $r < R(y)$}}, \\
	\limsup_{r \to 0} A_r &\defeq \bigcap_{R > 0} \bigcup_{r < R} A_r = \Set{y \in X}{\text{for some null sequence $(r_n)_{n \in \Naturals}$, } y \in A_{r_n} \text{ for all $n \in \Naturals$}},
\end{align*}
and the \defterm{Kuratowski lower and upper limits} of $(A_r)_{r > 0}$ are defined by
\begin{align*}
	\Li_{r \to 0} A_r &\defeq \Set{y \in X}{y \text{ is a limit point of the net } (A_r)_{r > 0}}, \\
	\Ls_{r \to 0} A_r &\defeq \Set{y \in X}{y \text{ is a cluster point of the net } (A_r)_{r > 0}}. 
\end{align*}

The following is a useful equivalent characterisation of the Kuratowski limits:

\begin{lemma}[{\citealp[Lemmas~5.2.7 and 5.2.8]{Beer1993}}]
	\label{lem:kuratowski_limits_nets}
	Let $X$ be any metric space and let $(A_r)_{r > 0}$ be a net of subsets of $X$.
	\begin{enumerate}[label=(\alph*)]
		\item $x \in \Li_{r \to 0} A_r$ if and only if there exists a net $(x_r)_{r > 0}$ converging to $x$ with $x_r \in A_r$.
		\item $x \in \Ls_{r \to 0} A_r$ if and only if there exists a decreasing null sequence $(r_n)_{n \in \Naturals}$ and a sequence $(x_{r_n})_{n \in \Naturals}$ converging to $x$ with $x_{r_n} \in A_{r_n}$.
	\end{enumerate}
\end{lemma}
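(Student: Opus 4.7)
The plan is to reformulate both conditions in terms of the distance function $d(x, A) \defeq \inf_{a \in A} d(x, a)$, which transforms the topological characterisations into concrete analytic ones, and then to construct the required net or sequence explicitly.

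First I would establish the following metric reformulation: in any metric space, $x$ is a ``limit point of the net $(A_r)_{r > 0}$'' exactly when $\lim_{r \to 0} d(x, A_r) = 0$, and a ``cluster point'' exactly when $\liminf_{r \to 0} d(x, A_r) = 0$. Both equivalences come straight from unpacking the standard neighbourhood definitions: $x$ is a limit point iff every open ball $\oball{x}{\varepsilon}$ meets $A_r$ for all sufficiently small $r$, and $x$ is a cluster point iff every open ball meets $A_r$ for arbitrarily small $r$.

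For part (a), the reverse direction is immediate: if $x_r \in A_r$ and $x_r \to x$, then $d(x, A_r) \leq d(x, x_r) \to 0$, so $x \in \Li_{r \to 0} A_r$. For the forward direction, given $x \in \Li_{r \to 0} A_r$, apply the axiom of choice to pick, for each $r > 0$, a point $x_r \in A_r$ with
\begin{equation*}
	d(x, x_r) \leq d(x, A_r) + r .
\end{equation*}
(If $A_r$ is empty, the condition $x \in \Li_{r \to 0} A_r$ can only hold vacuously for $r$ sufficiently small, otherwise $d(x, A_r) = +\infty$; one can then redefine $x_r$ arbitrarily on the exceptional small set of $r$.) The right-hand side tends to $0$ as $r \to 0$, so $x_r \to x$, as required.

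For part (b), the reverse direction is again immediate, since the sequence $(x_{r_n})_{n \in \Naturals}$ witnesses $\liminf_{r \to 0} d(x, A_r) = 0$. For the forward direction, the construction is inductive: choose $r_1 > 0$ with $r_1 < 1$ and $d(x, A_{r_1}) < 1$; then, having chosen $r_1 > r_2 > \dots > r_{n-1}$, use $\liminf_{r \to 0} d(x, A_r) = 0$ to pick $r_n < \min\{r_{n-1}, \tfrac{1}{n}\}$ with $d(x, A_{r_n}) < \tfrac{1}{n}$. Finally pick $x_{r_n} \in A_{r_n}$ with $d(x, x_{r_n}) < d(x, A_{r_n}) + \tfrac{1}{n} < \tfrac{2}{n}$. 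Then $(r_n)_{n \in \Naturals}$ is a decreasing null sequence and $x_{r_n} \to x$.

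The only real obstacle is administrative: making the metric reformulation airtight and keeping track of the case $A_r = \varnothing$. Everything else is standard metric bookkeeping, which is why this appears as a cited black-box lemma (Beer, Lemmas~5.2.7--5.2.8) rather than being reproven in full.
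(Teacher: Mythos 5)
The paper treats this lemma as a black-box citation to Beer (1993, Lemmas 5.2.7--5.2.8) and does not prove it, so there is no internal proof to compare against. Your argument is the standard one --- rewriting the topological definitions of limit and cluster points via the distance function $d(x, A_r)$ and then explicitly constructing the approximating net or sequence by choosing near-minimisers --- and it is correct in all essentials. The metric reformulations $x \in \Li_{r \to 0} A_r \iff \lim_{r \to 0} d(x, A_r) = 0$ and $x \in \Ls_{r \to 0} A_r \iff \liminf_{r \to 0} d(x, A_r) = 0$ are exactly right, the reverse directions are immediate from $d(x, A_r) \leq d(x, x_r)$, and the forward constructions (near-minimiser selection for $\Li$, inductive extraction of a decreasing null sequence for $\Ls$) go through.

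One technical wrinkle you flag but handle a little loosely: in part (a), the statement as quoted literally asks for a net $(x_r)_{r > 0}$ with $x_r \in A_r$ for \emph{every} $r > 0$, which is impossible if $A_r = \varnothing$ for some $r$ bounded away from zero. Your remedy --- ``redefine $x_r$ arbitrarily on the exceptional set'' --- fixes convergence (which depends only on the tail as $r \to 0$) but then fails the constraint $x_r \in A_r$ on that set. The honest resolution is either to interpret the conclusion as requiring $x_r \in A_r$ only eventually (i.e.\ for all sufficiently small $r$), which is the convention Beer uses and all that is needed downstream, or to observe that the lemma is applied in the paper only to the nonempty sets $\mathord{\preceq_{r}}$, so the issue never arises. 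This is a feature of the lemma's paraphrasing rather than a real gap in your reasoning, and the rest of the proof is sound.
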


The set limits described above give four different approaches to taking the limit of the sets $(\preceq_r)_{r > 0}$, which we denote
\begin{align*}
	\mathord{\liprec} &\defeq \liminf_{r \to 0} \mathord{\preceq_r}, &\mathord{\lsprec} &\defeq \limsup_{r \to 0} \mathord{\preceq_r}, \\
	\mathord{\Liprec} &\defeq \Li_{r \to 0} \mathord{\preceq_r}, &\mathord{\Lsprec} &\defeq \Ls_{r \to 0} \mathord{\preceq_r}.
\end{align*}

\begin{proposition}
	\label{prop:limits_of_preceq_r}
	Let $X$ be a metric space and let $\mu \in \prob{X}$.
	\begin{enumerate}[label=(\alph*)]
		\item \label{item:limits_of_preceq_r_liprec_preorder} $\liprec$ is a preorder;
		\item \label{item:limits_of_preceq_r_liprec_subset} $\mathord{\liprec}$ is a subset of $\mathord{\preceq_0}$;
		\item \label{item:limits_of_preceq_r_liprec_subset_2} $x \liprec x' \implies x \preceq_0 x'$.
	\end{enumerate}
\end{proposition}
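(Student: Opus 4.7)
The plan is to unwind the definition $\mathord{\liprec} = \liminf_{r\to 0}\mathord{\preceq_r}$ using the general identity $\liminf_{r\to 0}A_r = \bigcup_{R>0}\bigcap_{r<R}A_r$. This gives the working characterisation: $x \liprec x'$ if and only if there exists some $R > 0$ such that $x \preceq_r x'$ holds for \emph{every} $r \in (0,R)$, i.e., $\crcdf{x}{r} \leq \crcdf{x'}{r}$ for every sufficiently small $r$. Once this is in hand, all three parts follow quickly.

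For part \ref{item:limits_of_preceq_r_liprec_preorder}, I would establish reflexivity and transitivity directly from this eventual-pointwise description. Reflexivity is immediate, since $x \preceq_r x$ holds for every $r > 0$ trivially, so $x \liprec x$ with any witness $R > 0$. For transitivity, if $x \liprec y$ with witness $R_1$ and $y \liprec z$ with witness $R_2$, then for every $r < \min\{R_1, R_2\}$ the transitivity of the positive-radius preorder $\preceq_r$ (inherited from $\leq$ on $\Reals$) yields $x \preceq_r z$, so $x \liprec z$ with witness $\min\{R_1, R_2\}$.

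Parts \ref{item:limits_of_preceq_r_liprec_subset} and \ref{item:limits_of_preceq_r_liprec_subset_2} assert the same thing in two notations (the set-theoretic inclusion versus its pointwise unpacking), so it suffices to establish the pointwise implication. Assume $x \liprec x'$ and let $R > 0$ satisfy $\crcdf{x}{r} \leq \crcdf{x'}{r}$ for all $r \in (0,R)$. I would split on whether $x \in \supp(\mu)$. If $x \notin \supp(\mu)$, then $x \preceq_0 x'$ by the exceptional clause of \Cref{defn:analytic_small-radius_preorder}. Otherwise $\crcdf{x}{r} > 0$ for every $r > 0$; combined with the hypothesis for $r < R$ and the monotonicity of $r \mapsto \crcdf{x'}{r}$ (\Cref{lem:RCDF}\ref{lem:RCDF_in_r}), this forces $\crcdf{x'}{r} > 0$ for every $r > 0$, so $x' \in \supp(\mu)$ as well. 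The ratio bound $\crcdf{x}{r}/\crcdf{x'}{r} \leq 1$ on $(0,R)$ then gives $\limsup_{r \to 0} \crcdf{x}{r}/\crcdf{x'}{r} \leq 1$, which is precisely $x \preceq_0 x'$ by \eqref{eq:preceq_0}.

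No real obstacle is anticipated here: the whole proposition reduces to manipulating an eventually-valid pointwise inequality between RCDFs. The only point requiring any care is bookkeeping the exceptional cases in the definition of $\preceq_0$ at points outside $\supp(\mu)$, which the case split above handles cleanly.
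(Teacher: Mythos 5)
Your proof is correct and takes essentially the same approach as the paper: unwind $\liprec$ into the ``eventually $x \preceq_r x'$ for all small $r$'' characterisation, prove reflexivity and transitivity of part (a) directly from that, and for parts (b)/(c) pass from the eventual pointwise inequality to the $\limsup$ condition defining $\preceq_0$. One small point in your favour: the paper's proof of (b) jumps straight from ``$x \preceq_r x'$ for all $r < R$'' to ``$\limsup_{r\to 0} \crcdf{x}{r}/\crcdf{x'}{r} \le 1$, hence $x \preceq_0 x'$'' without explicitly handling the exceptional clauses in \Cref{defn:analytic_small-radius_preorder} for points outside $\supp(\mu)$. Your case split — observing that $x \notin \supp(\mu)$ is trivial by the exceptional clause, and that $x \in \supp(\mu)$ forces $x' \in \supp(\mu)$ via $0 < \crcdf{x}{r} \le \crcdf{x'}{r}$ — closes that small gap cleanly, so your write-up is if anything slightly more careful than the original.
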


\begin{proof}
	For \ref{item:limits_of_preceq_r_liprec_preorder}, it is routine to check that $\liprec$ is a preorder:
	reflexivity is obvious, and if $x \liprec y$ and $y \liprec z$ then there exists $R > 0$ such that, for all $r < R$, $x \preceq_r y$ and $y \preceq_r z$, giving $x \preceq_r z$ by transitivity of $\preceq_r$.

	For \ref{item:limits_of_preceq_r_liprec_subset}, $(x, x') \in \mathord{\liprec}$ implies that, for some $R > 0$ and all $r < R$, $x \preceq_r x'$.
	Hence,
	\begin{equation*}
		\limsup_{r \to 0} \frac{\crcdf{x}{r}}{\crcdf{x'}{r}} \leq 1,
	\end{equation*}
	so $x \preceq_{0} x'$ by definition.
	Claim \ref{item:limits_of_preceq_r_liprec_subset_2} follows immediately from \ref{item:limits_of_preceq_r_liprec_subset}: if $(x, x') \in \mathord{\liprec}$, then $(x, x') \in \mathord{\preceq_0}$, so $x \preceq_0 x'$.
\end{proof}

As a consequence of \ref{item:limits_of_preceq_r_liprec_subset}, any $\preceq_{0}$-antichain is also a $\liprec$-antichain.
Hence, \Cref{thm:countable_dense_antichain} gives an example of a countable dense $\liprec$-antichain;
this demonstrates that $\liprec$ does not have better behaviour in this regard than $\preceq_0$.

The set-theoretic ordering $\liprec$ can also be criticised as unnecessarily strict in cases where $x' \prec_0 x$ for any $r > 0$, but
\begin{equation*}
	\limsup_{r \to 0} \frac{\crcdf{x}{r}}{\crcdf{x'}{r}} = 1.
\end{equation*}
\Cref{eg:modes_limit_of_r-modes} gives a measure where the $\liprec$-greatest elements and the $\preceq_{0}$-greatest elements differ:
under $\liprec$ only $+1$ is greatest, whereas both $-1$ and $+1$ are $\preceq_{0}$-greatest.
While $\liprec$-greatest elements are reasonable candidates for modes, they do not seem to correspond exactly to any of the established definitions of modes.
To be more precise, while \Cref{prop:limits_of_preceq_r}\ref{item:limits_of_preceq_r_liprec_subset} implies that they are always weak modes, it is not clear whether or not they are strong modes, and not all weak modes are $\liprec$-greatest.

\begin{example}[$\lsprec$ is not necessarily transitive]
	The essential idea is even if $x \preceq_{r_n} y$ for some null sequence $(r_n)_{n \in \Naturals}$, and $y \preceq_{r_n'} z$ for some null sequence $(r_n')_{n \in \Naturals}$, it is possible that $x \not \preceq_r z$ for any $r > 0$.
	For a concrete example of this situation, let
	\begin{equation*}
		X \defeq \bigset{-2 \pm 2^{-3n + 2}}{n \in \Naturals} \cup \bigset{2 \pm 2^{-3n + 2}}{n \in \Naturals} \cup \bigset{\pm 2^{-3n + 2}}{n \in \Naturals}
	\end{equation*}
	with its usual Euclidean metric.
	Define the ``target RCDFs''
	\begin{align*}
		f(2^{-3n + 2}) &\defeq 2^{-3n + 2},~~~~~
		g(2^{-3n + 2}) \defeq 2^{-3n + 1}, \\
		h(2^{-3n + 2}) &\defeq
		\begin{cases}
			2^{-3n + 3}, & \text{if $n$ is odd,}\\
			2^{-3n}, & \text{if $n$ is even,}
		\end{cases}
	\end{align*}
	and let
	\begin{align*}
		\mu & \defeq \frac{1}{Z} \sum_{n \in \Naturals} \frac{1}{2}\bigl(f(2^{-3n + 2}) - f(2^{-3n - 1})\bigr)\bigl(\delta_{-2 - 2^{-3n + 2}} + \delta_{-2 + 2^{-3n + 2}}\bigr)
		\\
		& \phantom{\defeq} \quad + \frac{1}{Z} \sum_{n \in \Naturals} \frac{1}{2}\bigl(g(2^{-3n + 2}) - g(2^{-3n - 1})\bigr)\bigl(\delta_{2 - 2^{-3n + 2}} + \delta_{2 + 2^{-3n - 1}}\bigr) \\
		& \phantom{\defeq} \quad + \frac{1}{Z} \sum_{n \in \Naturals} \frac{1}{2}\bigl(h(2^{-3n + 2}) - h(2^{-3n - 1})\bigr)\bigl(\delta_{-2^{-3n + 2}} + \delta_{2^{-3n - 1}}\bigr),
	\end{align*}
	where $Z$ is a normalisation constant chosen to ensure that $\mu \in \prob{X}$.

	The construction of $\mu$ ensures that the RCDFs at $-2$, $+2$ and $0$ are $\frac{1}{Z} f(r)$, $\frac{1}{Z} g(r)$ and $\frac{1}{Z} h(r)$ for $r \leq 2^{-1}$.
	Then
	\begin{align*}
		\frac{\crcdf{-2}{2^{-3n+2}}}{\crcdf{0}{2^{-3n+2}}} &=
		\begin{cases}
				2^{-1}, & \text{if $n$ is odd,}\\
				2^2, & \text{if $n$ is even,}
		\end{cases} \\
		\frac{\crcdf{0}{2^{-3n + 2}}}{\crcdf{+2}{2^{-3n + 2}}} &=
		\begin{cases}
				2^2 & \text{if $n$ is odd}, \\
				2^{-1} & \text{if $n$ is even,}
		\end{cases} \\
		\frac{\crcdf{-2}{2^{-3n + 2}}}{\crcdf{+2}{2^{-3n + 2}}} &= 2.
	\end{align*}
	It follows that $-2 \lsasymp 0$, because there are null sequences $(r_n)_{n \in \Naturals}$ such that $-2 \preceq_{r_n} 0$ and vice versa; the same argument shows that $0 \lsasymp +2$.
	But $-2 \prec_r 2$ for any $r > 0$, and hence $-2 \not \lsprec 2$.
	This violates transitivity.
\end{example}

\begin{example}[$\Liprec$ and $\Lsprec$ are not necessarily transitive]
	\label{eg:Liprec_Lsprec_not_transitive}
	Let $\mu \in \prob{\Reals}$ be the measure with Lebesgue density $\rho(x) \defeq \one \big[ x \in [0, 1] \big]$.
	We first verify that:
	\begin{enumerate}[label=(\alph*)]
		\item \label{item:Liprec_Lsprec_not_transitive_1} $x \Liprec 1$ for any $x \in \Reals$;
		\item \label{item:Liprec_Lsprec_not_transitive_2} $1 \Liprec x$ for any $x \in \Reals$;
		\item \label{item:Liprec_Lsprec_not_transitive_3} $\nicefrac{1}{2} \not \Lsprec x$ for any $x \in \Reals \setminus [0, 1]$; and
		\item \label{item:Liprec_Lsprec_not_transitive_4} $\nicefrac{1}{2} \not \Liprec x$ for any $x \in \Reals \setminus [0, 1]$.
	\end{enumerate}
	For \ref{item:Liprec_Lsprec_not_transitive_1}, observe that $x \preceq_r 1 - r$ for all small $r$, so $(x, 1 - r) \in \mathord{\preceq_r}$.
	Hence, $(x, 1) \in \mathord{\Liprec}$ by \Cref{lem:kuratowski_limits_nets}.

	For \ref{item:Liprec_Lsprec_not_transitive_2}, use that $1 + r \preceq_r x$ for all small $r$, so $(1 + r, x) \in \mathord{\preceq_r}$.
	This implies that $(1, x) \in \mathord{\Liprec}$.

	For \ref{item:Liprec_Lsprec_not_transitive_3}, suppose that $(x_{r_n}', x_{r_n}) \to (\nicefrac{1}{2}, x)$, and $(x_{r_n}', x_{r_n}) \in \mathord{\preceq_{r_n}}$.
	Let $\varepsilon = \min \{ \absval{ x }, \absval{ x - 1 } \} > 0$.
	There exists $N_1 \in \Naturals$ such that, for all $n \geq N_1$, $\min \{ \absval{ x_{r_n} }, \absval{ x_{r_n} - 1 } \} > \nicefrac{\varepsilon}{2}$.
	As $(r_n)_{n \in \Naturals}$ is a decreasing null sequence, there exists $N_2 \in \Naturals$ such that, for all $n \geq N_2$, $r_n < \nicefrac{\varepsilon}{2}$.
	Picking $N \defeq \max \{ N_1, N_2 \}$, we have $\crcdf{x_{r_n}}{r_n} = 0$ for $n \geq N$, because $\cball{x_{r_n}}{r_n} \cap [0, 1] = \varnothing$.
	It is easy to see that if $x_{r_n}' \to \nicefrac{1}{2}$, then for sufficiently large $n$ $\crcdf{x_{r_n}'}{r_n} > 0$.
	Hence, for all sufficiently large $n$, $x_{r_n}' \not \preceq_{r_n} x_{r_n}$.
	This is a contradiction.

	Claim \ref{item:Liprec_Lsprec_not_transitive_4} is a corollary of \ref{item:Liprec_Lsprec_not_transitive_3}, because $\mathord{\Liprec} \subseteq \mathord{\Lsprec}$.

	Now we prove that $\Liprec$ and $\Lsprec$ are not transitive.
	Suppose for contradiction that they are:
	then \ref{item:Liprec_Lsprec_not_transitive_1} and \ref{item:Liprec_Lsprec_not_transitive_2} imply that every point $x \in \Reals$ is equivalent to $1$, and so all points in $\Reals$ are equivalent by transitivity.
	As $\mathord{\Liprec} \subseteq \mathord{\Lsprec}$, this implies that all points are also $\Lsprec$-equivalent.
	However, \ref{item:Liprec_Lsprec_not_transitive_3} and \ref{item:Liprec_Lsprec_not_transitive_4} show that not all points in $\Reals$ are $\Lsprec$-equivalent or $\Liprec$-equivalent.
\end{example}

\section*{Acknowledgements}
\addcontentsline{toc}{section}{Acknowledgements}

This work has been partially supported by the Deutsche Forschungsgemeinschaft through project \href{https://gepris.dfg.de/gepris/projekt/415980428}{415980428}.
HL is supported by the Warwick Mathematics Institute Centre for Doctoral Training and gratefully acknowledges funding from the University of Warwick and the UK Engineering and Physical Sciences Research Council (Grant number: EP/W524645/1).
The authors would like to thank David Bate, Adam Epstein, Ilja Klebanov, Florian Theil, and Philipp Wacker for helpful discussions.

\bibliographystyle{abbrvnat}
\bibliography{references}
\addcontentsline{toc}{section}{References}

\end{document}